\documentclass[11pt,reqno]{amsart}      
\usepackage{cite}                       
\usepackage{amssymb,amsthm}             


\usepackage{calc}                       
\usepackage{graphicx}

\numberwithin{figure}{section}          

\numberwithin{equation}{section}        
\renewcommand{\Re}{{\mathbb R}}         
\newcommand{\NatNum}{{\mathbb N}}       
\newcommand{\half}{\frac{1}{2}}         
\newcommand{\third}{\frac{1}{3}}        
\newcommand{\quart}{\frac{1}{4}}        
\DeclareMathOperator*\esssup {ess \, sup}  
%
%






\theoremstyle{plain}
\newtheorem{thm}{Theorem}[section]
\newtheorem{cor}[thm]{Corollary}

\newtheorem{lemma}[thm]{Lemma}
\newtheorem{definition}[thm]{Definition}

\newtheorem{remark}[thm]{Remark}
\title[Optimal Control for Ginzburg-Landau]{Convergence Rates for an
  Optimally Controlled Ginzburg-Landau equation}
\subjclass[2000]{49M29, 65M12}
\keywords{Ginzburg-Landau Equation, Optimal Control, Hamilton-Jacobi,
  Error Estimates, Stochastic Partial Differential Equation,
  Symplectic Euler}
\author{Mattias Sandberg}
\address{CMA, University of Oslo, P.O. Box 1053 Blindern, 0316 Oslo, Norway
}
\email{mattias.sandberg@cma.uio.no}

\setcounter{tocdepth}{2}

\begin{document}
\begin{abstract}
An optimal control problem related to the probability of transition
between stable states for a thermally driven Ginzburg-Landau equation
is considered. The value function for  the optimal control problem
with a spatial discretization is shown to converge quadratically to
the value function for the original problem. This is done by using that the
value functions solve similar Hamilton-Jacobi equations, the equation for the
original problem being defined on an infinite dimensional Hilbert
space. Time discretization is performed using the Symplectic Euler
method. Imposing a reasonable condition this method is shown to be
convergent of order one in time, with a constant independent of the spatial discretization.
\end{abstract}
\maketitle

\tableofcontents
\section{Introduction}
We shall in this paper study the convergence of the Symplectic Euler
scheme for approximating optimal control of the real Ginzburg-Landau
equation. This follows the work developed in \cite{Sandberg-Szepessy}, where a
convergence result for the value function to an optimally controlled
ODE is obtained using the corresponding Hamilton-Jacobi equation. As
there exists a rigorous theory also for infinite-dimensional
Hamilton-Jacobi equations, developed by M.\ Crandall and P.-L. Lions
\cite{Crandall-Lions1,Crandall-Lions2,Crandall-Lions3,Crandall-Lions4,Crandall-Lions5,Crandall-Lions6,Crandall-Lions7}, it is possible to perform a convergence analysis
for a spatial discretization of an optimally controlled PDE, using
that the value function is a viscosity solution to an
infinite-dimensional Hamilton-Jacobi equation. In this paper the
analysis is performed for the specific problem at hand, but hopefully
the analysis is clear enough to make adaptations to other circumstances
(fairly) easy.

Consider the stochastic PDE
\begin{equation}\label{eq:SPDE}
\varphi_t=\delta\varphi_{xx}-\delta^{-1}V'(\varphi)+\sqrt{\varepsilon}\eta,
\quad \text{in } [0,T]\times[0,1],
\end{equation}
where $\delta$ is a positive number and $\eta$ is white noise in two dimensions; this means that $\eta$
is a random Gaussian distribution with zero mean and covariance
\begin{equation*}
E\big(\eta(x,t),\eta(x',t')\big)=\bar\delta(x-x')\bar\delta(t-t'),
\end{equation*}
where $E$ denotes the expectation and $\bar\delta$ the Dirac delta distribution. The ``state'' variable $\varphi$
satisfies the Dirichlet boundary conditions
\begin{equation*}
\varphi(t,0)=\varphi(t,1)=0,
\end{equation*}
and $V$ is the ``double-well'' potential
\begin{equation*}
V(\varphi)=\quart (\varphi^2-1)^2;
\end{equation*}
see Figure \ref{fig:V}. In one space dimension the stochastic PDE
(\ref{eq:SPDE}) makes sense, as existence and uniqueness of solutions
can be proved. Taking $\varepsilon=0$, the solutions to
(\ref{eq:SPDE}) generically approach one of the two stable critical
points, $\varphi_+$ or $\varphi_-$, (see Figure \ref{fig:phiplusphiminus}), which constitute
minima to the energy
\begin{equation*}
\int_0^1 \big( \frac{\delta}{2}\varphi_x^2 + \delta^{-1} V(\varphi)\big)dx.
\end{equation*}
\begin{figure}
\centering
\includegraphics[width=0.5\textwidth]{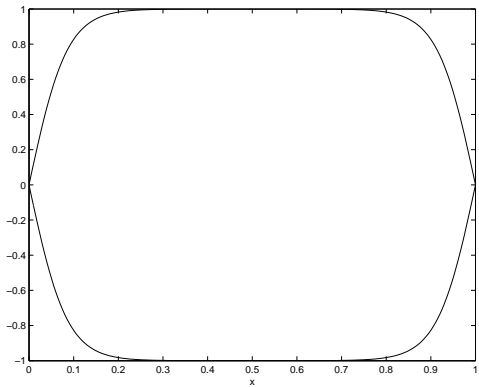}
\caption{Upper curve: $\varphi_+$. Lower curve: $\varphi_-$.}
\label{fig:phiplusphiminus}
\end{figure}
With a small $\varepsilon$, the solutions to (\ref{eq:SPDE}) spend
most of the time in the vicinity of either $\varphi_+$ or $\varphi_-$, but
as rare events make the transition from one to the other. The equation
(\ref{eq:SPDE}) may therefore be taken as a model for thermally driven
phase transitions, nucleations, etc.

The probability of jumping from $\varphi_+$ to $\varphi_-$ in the finite
time $T$ is related to the action functional
\begin{equation}\label{eq:actionfunctional}
I(\varphi)=\half\int_0^T\int_0^1 \big(\varphi_t - \delta\varphi_{xx} +
\delta^{-1} V'(\varphi)\big)^2 dx\, dt.
\end{equation}
Introduce the probability $P_T$ that a solution $\varphi$ to
(\ref{eq:SPDE}), with $\varphi(0)=\varphi_+$, satisfies $\varphi(T)\in S$, where
$S$ is an open subset of the set of continuous functions on the
spatial interval
$[0,1]$. Theory of large deviations  in \cite{Faris-Jona-Lasinio} gives that 
\begin{align*}
& -I(S) \leq \liminf_{\varepsilon \rightarrow 0}
 \varepsilon\log P_T \\
\intertext{and}
&\limsup_{\varepsilon \rightarrow 0}\varepsilon\log P_T \leq
 -I(\bar S)\\
\intertext{where}
&I(S)=\inf I(\varphi),
\end{align*}
with the infimum in the last equality taken over all continuous
functions $\varphi$ in $[0,T]\times[0,1]$ starting at $\varphi_+$ and ending
in $S$, and where $\bar S$ is the closure of $S$. By taking $S$ a small neighborhood of $\varphi_-$ we can
for small $\varepsilon$ approximate the probability of transition from
$\varphi_+$ to $\varphi_-$ with
\begin{equation*}
P_T \approx e^{-I(S)/\varepsilon}.
\end{equation*}

In \cite{Weinan-Ren-Vanden-Eijnden} the minimization of (\ref{eq:actionfunctional}) is
performed for $\varphi(0)=\varphi_+$ and $\varphi(T)=\varphi_-$ using optimization
of a finite difference approximation.
In this paper $\varphi(T)$ will not be held fixed, but
instead a penalty cost at the final time is added to the functional
(\ref{eq:actionfunctional}) in order to force the solution to end up
close to $\varphi_-$. The optimal control problem which will be
considered here is the following. Minimize, over all $\alpha \in
L^2\big(0,T;L^2(0,1)\big)$, the value $v_{\varphi_+,0}(\alpha)$, where
the functional $v$ is 
defined by
\begin{equation}\label{eq:costfunctional}
v_{\varphi_0,t_0}(\alpha)=\int_{t_0}^T h\big(\alpha(t)\big)dt+g\big(\varphi(T)\big),
\end{equation}
and where $\varphi$ is a mild solution to 
\begin{equation}\label{eq:flow}
\varphi_t=\delta \varphi_{xx}-\delta^{-1} V'(\varphi)+\alpha, \quad \varphi(t_0)=\varphi_0.
\end{equation}
In order to define mild solutions we denote by $S(t)$ the contraction
semigroup of bounded linear operators on $L^2(0,1)$ generated by
$\delta\, d^2/dx^2$ defined on $H^1_0(0,1) \cap H^2(0,1)$. A mild solution
to \eqref{eq:flow} is a function 
$\varphi \in C(t_0,T;L^2)$ such
that, for all $t_0 \leq t \leq T$,
\begin{equation}\label{eq:phimild}
\varphi(t)=S(t-t_0)\varphi_0 + \int_{t_0}^T S(t-s) \big(
-\delta^{-1}V'(\varphi(s))+\alpha(s) \big) ds.
\end{equation}
In the appendix existence and uniqueness of weak solutions in
$C(t_0,T;H_0^1)$ of \eqref{eq:flow} is proved when the starting
position $\varphi_0 \in H_0^1(0,1)$. Such weak solutions are
also mild solutions (this can be seen by using  e.g.\ the calculation on page 105 in \cite{Pazy}). Furthermore,
with $\alpha$ bounded in $L^2(t_0,T;L^2)$, the weak solution is
bounded in $C(t_0,T;H_0^1)$. Hence, the potential $V$ may be changed
outside an interval $[-s,s]$ without changing the result of the
optimal control problem. For simplicity, we shall henceforth use the
potential $\tilde V$ in Figure~ \ref{fig:V} and quickly change
notation, so that we let $V \equiv \tilde V$, i.e.\ $V$ is given by
the dashed line. Letting the transition from the interval $[-s,s]$
to the outside be a smooth one we can assume that arbitrarily many
derivatives of $V$ are bounded. 
\begin{figure}
\centering
\includegraphics[width=0.5\textwidth]{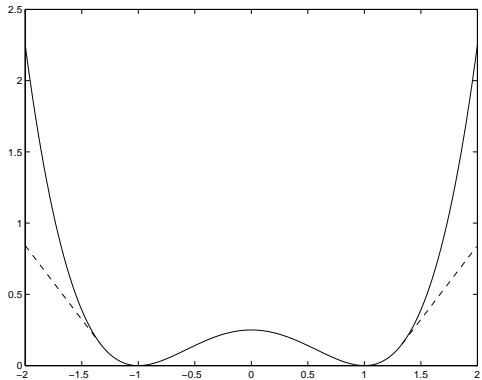}
\caption{The original potential $V$ is drawn with the solid line. The
  modified potential $\tilde V$ coincides with $V$ in the interval
  $[-s,s]$ and is drawn with dashed lines outside that interval.}
\label{fig:V}
\end{figure}
When the function $V'$ is bounded in supremum-norm and the control,
$\alpha$, is bounded in $L^2(t_0,T;L^2)$, uniqueness of mild solutions
to \eqref{eq:flow} holds; see \cite{Cannarsa-Frankowska1}. For
starting positions $\varphi_0 \in H_0^1(0,1)$  it therefore holds that
mild solutions and weak solutions are the same, and for the analysis
either concept of solution may be used.

The running cost, $h$, corresponds to the action functional
(\ref{eq:actionfunctional}) as
\begin{equation}\label{eq:runningcost}
h(\alpha)=||\alpha||_{L^2(0,1)}^2/2,
\end{equation}
and the final cost is the squared $L^2$ distance from $\varphi_-$,
\begin{equation}\label{eq:finalcost}
g(\varphi)=K||\varphi-\varphi_-||_{L^2(0,1)}^2,
\end{equation}
where $K$ is a constant large enough to force $\varphi(T)\approx
\varphi_-$. We denote by $u$ the \emph{value function}, i.e.\ the best
possible value of (\ref{eq:costfunctional}) for each starting position
$(\varphi_0,t_0)$:
\begin{equation}\label{eq:valuefunction}
u(\varphi_0,t_0)=\inf \big\{v_{\varphi_0,t_0}(\alpha)\ :\ \alpha \in L^2\big(t_0,T;L^2(0,1)\big)\big\}
\end{equation}

\emph{Notation:} We henceforth let $||\cdot ||$ and $(\cdot,\cdot)$ be
the $L^2$ norm and inner product on $(0,1)$, and $|\cdot|$ be the
supremum norm on $\Re$.
The Dirac delta distribution will be denoted $\bar\delta$, as $\delta$
is used as the diffusivity constant.

\emph{Outline:} Section \ref{sec:Preliminaries} contains some facts
regarding the value function, which are applied in Section
\ref{sec:spatialdiscr} when the error from the spatial discretization
is established. In Section \ref{sec:timediscr} convergence of the time
discretization using the Symplectic Euler method is examined. Under a
reasonable condition, this method is shown to be convergent of order
one, with a constant independent of the spatial
discretization. Numerical results with examples of the convergence
rate for discretization in both space and time  is given in Section~ \ref{sec:NumRes}.  
\section{Preliminaries}\label{sec:Preliminaries}
This section contains some results which will be needed when the spatial
discretization error bound is established in Section
\ref{sec:spatialdiscr}. We start with a theorem about the boundedness
of optimal controls. 
\begin{thm}\label{thm:boundedcontrol}
For all $\varphi_0 \in H_0^1(0,1)$ and $0\leq t_0 \leq T$ the value
function $u$ satisfies
\begin{equation*}
u(\varphi_0,t_0)=\inf \big\{v_{\varphi_0,t_0}(\alpha)\ :\ ||\alpha||_{L^\infty(t_0,T;L^2)} \leq E ||\varphi_0||
+ F\big\}
\end{equation*}
where the constants $E$ and $F$ depend on $\delta$, $K$, $\varphi_-$,
$T$, $|V'|$ and $|V''|$, but not on $\varphi_0$ and $t_0$.
\end{thm}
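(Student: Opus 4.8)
\emph{Reduction and strategy.} The inequality ``$\geq$'' is immediate, the right-hand side being an infimum over a subset of the admissible controls, so my plan is to produce, for each $(\varphi_0,t_0)$, an optimal control $\alpha^{*}$ satisfying $\|\alpha^{*}\|_{L^\infty(t_0,T;L^2)}\leq E\|\varphi_0\|+F$. Existence of an optimal control I would get from the direct method: coercivity and strict convexity of $h(\alpha)=\|\alpha\|^{2}/2$ bound a minimizing sequence in $L^2(t_0,T;L^2)$ and make the cost weakly lower semicontinuous, while $S(t)$ is compact for $t>0$ (analytic semigroup with compact resolvent), which supplies the compactness of the control-to-trajectory map needed to pass to the limit in the mild formulation \eqref{eq:phimild}. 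I would then argue in three steps: (i) bound the optimal cost, and hence $\|\varphi^{*}(T)-\varphi_-\|$, by comparing with the control $\alpha\equiv0$; (ii) use the first-order optimality condition to write $\alpha^{*}=-p$ for an adjoint state $p$; (iii) estimate $p$ by a backward energy estimate, which is what upgrades the crude $L^2$-in-time cost bound to the desired pointwise-in-time bound.

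\emph{Step (i).} Let $\varphi^{0}$ solve \eqref{eq:flow} with $\alpha\equiv0$, $\varphi^{0}(t_0)=\varphi_0$. Testing with $\varphi^{0}$, using $\delta(\varphi^{0}_{xx},\varphi^{0})=-\delta\|\varphi^{0}_{x}\|^{2}\leq0$ and $|(V'(\varphi^{0}),\varphi^{0})|\leq|V'|\,\|\varphi^{0}\|$ (Cauchy--Schwarz on the unit interval), one gets $\tfrac{d}{dt}\|\varphi^{0}\|\leq\delta^{-1}|V'|$, hence $\|\varphi^{0}(t)\|\leq\|\varphi_0\|+\delta^{-1}|V'|T$ on $[t_0,T]$. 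Since $v_{\varphi_0,t_0}(0)=g(\varphi^{0}(T))$ and $\alpha^{*}$ is optimal,
\begin{equation*}
K\|\varphi^{*}(T)-\varphi_-\|^{2}=g(\varphi^{*}(T))\leq v_{\varphi_0,t_0}(\alpha^{*})\leq v_{\varphi_0,t_0}(0)\leq K\big(\|\varphi_0\|+\delta^{-1}|V'|T+\|\varphi_-\|\big)^{2},
\end{equation*}
so $\|\varphi^{*}(T)-\varphi_-\|\leq\|\varphi_0\|+\delta^{-1}|V'|T+\|\varphi_-\|$.

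\emph{Step (ii).} The linearization $\psi$ of the trajectory along $\alpha^{*}$ in a direction $\beta$ solves $\psi_t=\delta\psi_{xx}-\delta^{-1}V''(\varphi^{*})\psi+\beta$, $\psi(t_0)=0$, so the derivative of the cost functional in the direction $\beta$ is $\int_{t_0}^{T}(\alpha^{*},\beta)\,dt+2K(\varphi^{*}(T)-\varphi_-,\psi(T))$. Introducing the adjoint state
\begin{equation*}
-p_t=\delta p_{xx}-\delta^{-1}V''(\varphi^{*})\,p,\qquad p(t,0)=p(t,1)=0,\qquad p(T)=2K(\varphi^{*}(T)-\varphi_-),
\end{equation*}
and integrating $\tfrac{d}{dt}(p,\psi)$ over $[t_0,T]$ (the second-order terms cancel after one integration by parts), one finds $2K(\varphi^{*}(T)-\varphi_-,\psi(T))=\int_{t_0}^{T}(p,\beta)\,dt$. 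Thus the derivative of the cost equals $\int_{t_0}^{T}(\alpha^{*}+p,\beta)\,dt$, and its vanishing for all $\beta\in L^2(t_0,T;L^2)$ gives $\alpha^{*}=-p$ a.e., in particular $\|\alpha^{*}(t)\|=\|p(t)\|$. By Step (i), $\|p(T)\|=2K\|\varphi^{*}(T)-\varphi_-\|\leq2K\big(\|\varphi_0\|+\delta^{-1}|V'|T+\|\varphi_-\|\big)$.

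\emph{Step (iii) and the main obstacle.} Set $q(\tau)=p(T-\tau)$, which solves $q_\tau=\delta q_{xx}-\delta^{-1}V''(\varphi^{*}(T-\tau))q$ with $q(0)=p(T)$ and Dirichlet conditions. Testing with $q$,
\begin{equation*}
\tfrac12\tfrac{d}{d\tau}\|q\|^{2}=-\delta\|q_x\|^{2}-\delta^{-1}\big(V''(\varphi^{*}(T-\tau))q,q\big)\leq\delta^{-1}|V''|\,\|q\|^{2},
\end{equation*}
so Gronwall gives $\|q(\tau)\|\leq e^{\delta^{-1}|V''|\tau}\|p(T)\|$ for $\tau\in[0,T-t_0]$. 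Hence, $p$ being in $C(t_0,T;L^2)$,
\begin{equation*}
\|\alpha^{*}\|_{L^\infty(t_0,T;L^2)}=\sup_{t\in[t_0,T]}\|p(t)\|\leq e^{\delta^{-1}|V''|T}\|p(T)\|\leq 2Ke^{\delta^{-1}|V''|T}\big(\|\varphi_0\|+\delta^{-1}|V'|T+\|\varphi_-\|\big),
\end{equation*}
which is of the required form with $E=2Ke^{\delta^{-1}|V''|T}$ and $F=2Ke^{\delta^{-1}|V''|T}\big(\delta^{-1}|V'|T+\|\varphi_-\|\big)$, depending only on $\delta,K,\varphi_-,T,|V'|,|V''|$. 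The step I expect to be delicate is Step (ii): establishing existence of an optimal control and rigorously justifying the adjoint computation $\alpha^{*}=-p$ in this infinite-dimensional parabolic setting, for which I would lean on the well-posedness of \eqref{eq:flow} (the appendix, and \cite{Cannarsa-Frankowska1}) and on standard duality for linear parabolic equations; Steps (i) and (iii) are then elementary energy estimates. If one prefers not to assert attainment of the infimum, the same computation applied to $\varepsilon$-optimal controls regularized via Ekeland's variational principle yields the conclusion with $E,F$ enlarged by an arbitrarily small amount.
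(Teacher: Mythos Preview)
Your proof is essentially correct, but it follows a genuinely different route from the paper's. The paper does \emph{not} invoke existence of an optimal control or the adjoint/Pontryagin characterization at this stage; instead it gives a direct truncation argument. After obtaining the same $L^2(t_0,T;L^2)$ cost bound $M$ from comparison with $\alpha\equiv0$ (your Step~(i)), the paper shows a Lipschitz estimate $|g(\varphi^1(T))-g(\varphi^2(T))|\leq R\int_{t_0}^T\|\alpha^1-\alpha^2\|\,dt$ with $R=E'\|\varphi_0\|+F'$, and then, for an arbitrary control $\alpha^1$ with $\|\alpha^1\|_{L^2}\leq\sqrt{2M}$, \emph{truncates} it to $\alpha^2(t)=\alpha^1(t)$ on $\{\|\alpha^1(t)\|\leq 2R\}$ and $\alpha^2(t)=0$ elsewhere. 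On the truncated set the running-cost saving $\int\|\alpha^1\|^2/2\geq R\int\|\alpha^1\|$ dominates the terminal-cost loss, so $v(\alpha^2)\leq v(\alpha^1)$ and $\|\alpha^2\|_{L^\infty(t_0,T;L^2)}\leq 2R$.

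What each approach buys: the paper's argument is entirely elementary (energy estimates plus a comparison), needs neither existence of minimizers nor any maximum-principle machinery, and in fact proves the slightly stronger statement that \emph{every} control can be improved to one satisfying the $L^\infty$ bound; this is why the paper can place the theorem before Corollary~\ref{cor:existence} and reuse the same argument verbatim for the discretized problem (Step~1 of Theorem~\ref{thm:diffcontrol}). Your approach is more structural and yields explicit constants, but it front-loads existence and the adjoint identity $\alpha^{*}=-p$. In the paper's logical order these are established \emph{after} Theorem~\ref{thm:boundedcontrol} (Corollary~\ref{cor:existence} and \eqref{eq:lambdaalpha}), and the cited result from \cite{Cannarsa-Frankowska1} is applied precisely with the $L^\infty$ bound already in hand. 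Your direct-method sketch and the elementary duality computation in Step~(ii) make your argument self-contained, so there is no genuine circularity, but you should be aware that you are reversing the paper's order of development, and that the rigorous justification of Step~(ii) in the infinite-dimensional parabolic setting is exactly the point where you need outside input (as you acknowledge).
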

\begin{proof}
It is first shown that with $\alpha(t)= 0$, for all $t$, the state variable at
the terminal time, $\varphi(T)$, is bounded in $L^2$ by a constant which 
depends on the starting point $\varphi_0$. This can be done by taking the inner product
with $\varphi$ in (\ref{eq:flow}), using $||\varphi_x||^2 \geq
0$, and noting that the function $t \mapsto ||\varphi(t)||^2$ is
absolutely continuous with 
$(\varphi,\varphi_t)=\frac{d}{dt}||\varphi||^2/2$ almost everywhere in
$[t_0,T]$. Hence
\begin{equation*}
\frac{d}{dt}||\varphi||^2/2 \leq -\delta^{-1} \big(V'(\varphi),\varphi\big)
\leq \delta^{-1}|V'| \cdot ||\varphi||,
\end{equation*} 
almost everywhere in $[t_0,T]$, and thereby
\begin{equation}\label{eq:fnd}
\frac{d}{dt}||\varphi|| \leq \delta^{-1}|V'|.
\end{equation}
By the fact that $\varphi_t$ is bounded in $L^2(t_0,T;L^2(0,1))$ (see
\cite{Evans}), it follows that the function $t \mapsto ||\varphi(t)||$
is absolutely continuous, and therefore \eqref{eq:fnd} implies that 
$||\varphi(T)||$ is bounded by $||\varphi_0|| + \delta^{-1}|V'|T$. Hence the final
cost, $g\big(\varphi(T)\big)$, is bounded in terms of the starting position:
\begin{multline*}
g\big(\varphi(T)\big) =K ||\varphi(T)-\varphi_-||^2 \leq 2K \big( ||\varphi(T)||^2
+ ||\varphi_-||^2 \big) \\
\leq 4K||\varphi_0||^2 + 4K\delta^{-2}|V'|^2 T^2 +
2K||\varphi_-||^2 =: M.
\end{multline*} 
It therefore holds that
\begin{equation*}
u(\varphi_0,t_0)=\inf \big\{v_{\varphi_0,t_0}(\alpha)\ :\
||\alpha||_{L^2(t_0,T;L^2)}^2 \leq 2M \big\}.
\end{equation*}
For all $\alpha$ bounded by $\sqrt{2M}$ in $L^2(t_0,T;L^2)$ we have
that $\varphi(T)$ is bounded, again by taking the inner product with
$\varphi$ in (\ref{eq:flow}):
\begin{equation*}
\half \frac{d}{dt}||\varphi||^2 \leq \delta^{-1}|V'| \cdot ||\varphi|| +
||\alpha||\cdot ||\varphi||,
\end{equation*}
which implies
$\frac{d}{dt} ||\varphi|| \leq \delta^{-1}|V'| + ||\alpha||$
and so
\begin{multline*}
||\varphi(T)|| \leq ||\varphi_0|| + \delta^{-1}|V'|T + \int_{t_0}^T
  ||\alpha||dt \\
\leq ||\varphi_0|| + \delta^{-1}|V'|T +
  \sqrt{T}||\alpha||_{L^2(t_0,T;L^2)} \\
\leq ||\varphi_0|| +
  \delta^{-1}|V'|T +\sqrt{2T}\sqrt{M} \leq E ||\varphi_0|| +F,
\end{multline*}
for some constants $E$ and $F$ which do not depend on $\varphi_0$ and $t_0$.

It shall now  be proved that changing the control $\alpha$ a small
amount changes the state $\varphi$ a small amount. We shall therefore
compare two solutions, $\varphi^1$ and $\varphi^2$, both starting at
$(\varphi_0,t_0)$, such that $\varphi^1$ solves (\ref{eq:flow}) with control
$\alpha^1$ and $\varphi^2$ with control $\alpha^2$. Subtract the two
evolution equations and take the inner product with $\varphi^1-\varphi^2$ to
obtain
\begin{multline*}
\half\frac{d}{dt}||\varphi^1-\varphi^2||^2+\delta||\varphi^1_x-\varphi^2_x||^2
= \\
\delta^{-1}(-V'(\varphi^1)+V'(\varphi^2),\varphi^1-\varphi^2)+(\alpha^1-\alpha^2,\varphi^1-\varphi^2)
\end{multline*}
which, by the boundedness of $V''$, entails
\begin{equation*}
\frac{d}{dt}||\varphi^1-\varphi^2|| \leq \delta^{-1}|V''|\cdot ||\varphi^1-\varphi^2||+||\alpha^1-\alpha^2||.
\end{equation*}
By Gr\"onwall's lemma we therefore have that
\begin{equation*}
||\varphi^1(T)-\varphi^2(T)|| \leq \exp (\delta^{-1} |V''|T)
  \int_{t_0}^T ||\alpha^1-\alpha^2||dt,
\end{equation*}
so, provided $\alpha^1$ and $\alpha^2$ are both bounded by $\sqrt{2M}$
in $L^2(t_0,T;L^2)$, the difference in terminal cost has the following bound:
\begin{equation}\label{eq:gdifference}
\begin{split}
|g\big(\varphi^1(T)\big)-g\big(\varphi^2(T)\big)| &= K \big|
 ||\varphi^1(T)-\varphi_-||^2-||\varphi^2(T)-\varphi_-||^2\big| \\
&=K\big|(\varphi^1(T)+\varphi^2(T)-2\varphi_-,\varphi^1(T)-\varphi^2(T))\big| \\
& \leq 2K(E||\varphi_0||+F+||\varphi_-||) \cdot ||\varphi^1(T)-\varphi^2(T)|| \\
& \leq R \int_{t_0}^T ||\alpha^1-\alpha^2||dt, \\
\intertext{where}
R&=2K \exp (\delta^{-1} |V''|T)(E||\varphi_0||+F+||\varphi_-||) \\
&=:
E'||\varphi_0|| +F',
\end{split}
\end{equation}
with the constants $E'$ and $F'$ independent of $\varphi_0$ and $t_0$.
Let now $\alpha^1$ be a control bounded by $\sqrt{2M}$ in $L^2(t_0,T;L^2)$ and
let $\alpha^2$ be a modification of $\alpha^1$:
\begin{equation*}
\alpha^2(t)=
\begin{cases}
\alpha^1(t), & \text{for all $t$ such that $||\alpha^1(t)|| \leq 2R$}, \\
0, & \text{otherwise.}  
\end{cases}
\end{equation*}
The difference in the terminal cost thus has the  bound
\begin{equation*}
|g(\varphi^1(T))-g(\varphi^2(T))| \leq R \int_{\{t:||\alpha^1(t)||>2R \}}
 ||\alpha^1|| dt,
\end{equation*}
while the difference in running cost is 
\begin{equation*}
\int_{\{t:||\alpha^1(t)||>2R \}} \frac{||\alpha^1||^2}{2} dt \geq
R\int_{\{t:||\alpha^1(t)||>2R \}} ||\alpha^1||dt,
\end{equation*}
so
\begin{equation*}
v_{\varphi_0,t_0}(\alpha^2) \leq v_{\varphi_0,t_0}(\alpha^1).
\end{equation*}
Hence for any control bounded in $L^2(t_0,T;L^2)$ there is another
control, bounded by $2E'||\varphi_0||+2F'$ in $L^\infty(t_0,T;L^2)$, which gives a smaller or equal value functional.
\end{proof}
With Theorem \ref{thm:boundedcontrol} the theory in
\cite{Cannarsa-Frankowska1} may be used, which establishes existence
of optimal controls to $u$ in \eqref{eq:valuefunction}. We state this
in a corollary.
\begin{cor}\label{cor:existence}
For each $(\varphi_0,t_0) \in H_0^1(0,1) \times [0,T]$ there exists a
minimizer $\alpha$, bounded in $L^\infty(t_0,T;L^2)$, in \eqref{eq:valuefunction}.
\end{cor}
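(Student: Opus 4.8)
The plan is to obtain the minimizer by the direct method of the calculus of variations, with Theorem \ref{thm:boundedcontrol} supplying the compactness; this is essentially the content of the existence result in \cite{Cannarsa-Frankowska1}, whose standing hypotheses are verified by that theorem. First, by Theorem \ref{thm:boundedcontrol} it suffices to minimize $v_{\varphi_0,t_0}$ over the set $\mathcal{A}=\{\alpha:||\alpha||_{L^\infty(t_0,T;L^2)}\le E||\varphi_0||+F\}$, which is in particular bounded in the Hilbert space $L^2(t_0,T;L^2)$. I would pick a minimizing sequence $\alpha^n\in\mathcal{A}$, so that $v_{\varphi_0,t_0}(\alpha^n)\to u(\varphi_0,t_0)$, and extract a subsequence with $\alpha^n\rightharpoonup\alpha$ weakly in $L^2(t_0,T;L^2)$. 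Since $\mathcal{A}$ is convex and strongly closed in $L^2(t_0,T;L^2)$ it is weakly closed (equivalently, balls in $L^\infty(t_0,T;L^2)=(L^1(t_0,T;L^2))^*$ are weak-$*$ sequentially compact), so the limit $\alpha$ again lies in $\mathcal{A}$, hence is bounded in $L^\infty(t_0,T;L^2)$ by $E||\varphi_0||+F$.

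Next I would show that the control-to-state map, $\alpha\mapsto\varphi$ with $\varphi$ the mild/weak solution of \eqref{eq:flow}, is continuous from $\mathcal{A}$ with the weak $L^2(t_0,T;L^2)$ topology into $C(t_0,T;L^2)$ with the strong topology. By the a priori bounds of the appendix, the states $\varphi^n$ associated with $\alpha^n$ are bounded in $C(t_0,T;H_0^1)$ with $\varphi^n_t$ bounded in $L^2(t_0,T;L^2)$; the Aubin--Lions lemma then gives a further subsequence converging strongly in $C(t_0,T;L^2)$ to some $\varphi$ (and weakly, say, in $L^2(t_0,T;H_0^1)$). One passes to the limit in the mild formulation \eqref{eq:phimild}: the term $S(t-t_0)\varphi_0$ is fixed; the convolution $\alpha\mapsto\int_{t_0}^t S(t-s)\alpha(s)\,ds$ is a bounded linear operator, hence weakly continuous, so it converges to the corresponding integral for $\alpha$; and $\int_{t_0}^t S(t-s)\big(-\delta^{-1}V'(\varphi^n(s))\big)\,ds\to\int_{t_0}^t S(t-s)\big(-\delta^{-1}V'(\varphi(s))\big)\,ds$ because $V'$ is globally Lipschitz (all derivatives of $V$ have been arranged bounded) and $\varphi^n\to\varphi$ strongly. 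Thus $\varphi$ is the state for $\alpha$.

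Finally I would check lower semicontinuity of the cost along the sequence. The running cost $\alpha\mapsto\int_{t_0}^T h(\alpha(t))\,dt=\half||\alpha||_{L^2(t_0,T;L^2)}^2$ is convex and strongly continuous, hence weakly lower semicontinuous, so $\int_{t_0}^T h(\alpha)\,dt\le\liminf_n\int_{t_0}^T h(\alpha^n)\,dt$. The terminal cost $g(\varphi(T))=K||\varphi(T)-\varphi_-||^2$ is continuous under strong $L^2$ convergence of the endpoint, and $\varphi^n(T)\to\varphi(T)$ strongly in $L^2$ from the $C(t_0,T;L^2)$ convergence above, so $g(\varphi^n(T))\to g(\varphi(T))$. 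Adding the two, $v_{\varphi_0,t_0}(\alpha)\le\liminf_n v_{\varphi_0,t_0}(\alpha^n)=u(\varphi_0,t_0)$, while $v_{\varphi_0,t_0}(\alpha)\ge u(\varphi_0,t_0)$ by definition of $u$ in \eqref{eq:valuefunction}; hence $\alpha$ is a minimizer, and it is bounded in $L^\infty(t_0,T;L^2)$ since $\alpha\in\mathcal{A}$.

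The main obstacle is the compactness/continuity of the control-to-state map: one must be sure the appendix's a priori estimates are uniform over the minimizing sequence and strong enough to invoke Aubin--Lions, and that the nonlinear term $V'(\varphi^n)$ passes to the limit — which is exactly where boundedness of $V''$ (Lipschitz $V'$) combines with the strong $C(t_0,T;L^2)$ convergence. The remaining ingredients — weak compactness of controls and lower semicontinuity of the convex running cost — are routine.
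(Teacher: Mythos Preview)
Your argument is correct and matches the paper's approach: the paper's proof consists entirely of invoking Theorem~\ref{thm:boundedcontrol} together with the existence result of \cite{Cannarsa-Frankowska1}, and what you have written is precisely a self-contained unpacking of that citation via the direct method (weak compactness of controls from the $L^\infty$ bound, compactness of the states via the parabolic a~priori estimates and Aubin--Lions, passage to the limit in the mild formulation using the global Lipschitz property of $V'$, and weak lower semicontinuity of the quadratic running cost).
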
  
\begin{proof}
Use Theorem \ref{thm:boundedcontrol} and \cite{Cannarsa-Frankowska1}.
\end{proof}
Theorem \ref{thm:boundedcontrol} is also used when proving Theorem
\ref{thm:semiconcave} about semiconcavity. In
\cite{Cannarsa-Frankowska2} a theorem on semiconcavity on $L^2(0,1)
\times [0,T)$ is established. This result could have been used in this
    paper, but as only the weaker result of semiconcavity on
    $H^1_0(0,1) \times [0,T]$ is needed for our purposes, an easier
    proof is given for this case.
\begin{thm}\label{thm:semiconcave}
The restriction of the value function, $u$,  to $H_0^1\times[0,T]$ is semiconcave.
\end{thm}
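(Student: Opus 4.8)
The plan is to verify the second-difference characterisation of semiconcavity: there is a constant $C$ — depending only on $\delta,K,\varphi_-,T,|V'|,|V''|,|V'''|$ and on a bound $R$ for $\|\varphi_0\|_{H^1_0}$ — such that
\[
u(\varphi_0+z,t_0+s)+u(\varphi_0-z,t_0-s)-2u(\varphi_0,t_0)\le C\big(\|z\|_{H^1_0}^2+s^2\big)
\]
for all $(z,s)\in H^1_0(0,1)\times\Re$ with $(\varphi_0\pm z,\,t_0\pm s)\in H^1_0(0,1)\times[0,T]$. First I would fix, by Corollary \ref{cor:existence}, an optimal control $\bar\alpha$ for $(\varphi_0,t_0)$; by Theorem \ref{thm:boundedcontrol}, $\|\bar\alpha\|_{L^\infty(t_0,T;L^2)}\le ER+F$, and its trajectory $\bar\varphi$ lies in $C(t_0,T;H^1_0)$, with $\bar\varphi_t\in L^2(t_0,T;L^2)$ (the regularity used for \eqref{eq:fnd}) and hence, via the equation, $\bar\varphi\in L^2(t_0,T;H^2)$, all with norms bounded in terms of $R$; also $\|\bar\varphi(T)-\varphi_-\|\le ER+F+\|\varphi_-\|$.

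For the spatial variable alone ($s=0$) the argument is short: feeding the \emph{same} control $\bar\alpha$ into both perturbed problems makes the running costs cancel identically, so the second difference of $u$ is bounded by $g(\varphi^+(T))+g(\varphi^-(T))-2g(\bar\varphi(T))$, where $\varphi^\pm$ is the trajectory from $\varphi_0\pm z$ driven by $\bar\alpha$. To bring in the time variable I would build the competitors by an affine reparametrization of time onto a common interval: rescaling $[t_0,T]$, $[t_0+s,T]$, $[t_0-s,T]$ all to $[0,1]$ turns $\bar\varphi$ and the driven trajectories into functions $\tilde{\bar\varphi},\tilde\psi^\pm$ on $[0,1]$ with $\tilde{\bar\varphi}(0)=\varphi_0$, $\tilde\psi^\pm(0)=\varphi_0\pm z$ solving
\[
\tilde{\bar\varphi}_\tau=\ell\big(\delta\tilde{\bar\varphi}_{xx}-\delta^{-1}V'(\tilde{\bar\varphi})+\tilde{\bar\alpha}\big),\qquad
\tilde\psi^\pm_\tau=(\ell\mp s)\big(\delta\tilde\psi^\pm_{xx}-\delta^{-1}V'(\tilde\psi^\pm)+\tilde{\bar\alpha}\big),
\]
with $\ell=T-t_0$; the rescaled running costs are $(\ell\mp s)\int_0^1 h(\tilde{\bar\alpha})$, summing to exactly $2\ell\int_0^1 h(\tilde{\bar\alpha})=2\int_{t_0}^T h(\bar\alpha)$, and $\varphi^\pm(T)=\tilde\psi^\pm(1)$. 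Hence, again,
\[
u(\varphi_0+z,t_0+s)+u(\varphi_0-z,t_0-s)-2u(\varphi_0,t_0)\le g(\tilde\psi^+(1))+g(\tilde\psi^-(1))-2g(\tilde{\bar\varphi}(1)),
\]
and, $g(\varphi)=K\|\varphi-\varphi_-\|^2$ being quadratic, the right-hand side equals $K\|\tilde\eta^+(1)\|^2+K\|\tilde\eta^-(1)\|^2+2K\big(\tilde\Xi(1),\tilde{\bar\varphi}(1)-\varphi_-\big)$ with $\tilde\eta^\pm=\tilde\psi^\pm-\tilde{\bar\varphi}$ and $\tilde\Xi=\tilde\eta^++\tilde\eta^-$. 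So the theorem reduces to a first-order bound $\|\tilde\eta^\pm(1)\|\le C(\|z\|_{H^1_0}+|s|)$ and the crucial second-order bound $\|\tilde\Xi(1)\|\le C(\|z\|_{H^1_0}^2+s^2)$.

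Both follow from energy estimates on $[0,1]$. Subtracting equations, $\tilde\eta^\pm$ solves a semilinear heat equation with source $-\ell\delta^{-1}(V'(\tilde\psi^\pm)-V'(\tilde{\bar\varphi}))\mp s(\delta\tilde\psi^\pm_{xx}-\delta^{-1}V'(\tilde\psi^\pm)+\tilde{\bar\alpha})$; pairing with $\tilde\eta^\pm$, using $|V'(\tilde\psi^\pm)-V'(\tilde{\bar\varphi})|\le|V''|\,|\tilde\eta^\pm|$, absorbing the $\delta\tilde\psi^\pm_{xx}$ term into the good term from $\ell\delta\tilde\eta^\pm_{xx}$, and using $\int_0^1\|\tilde\psi^\pm_\tau\|^2<\infty$ (parabolic regularity for the $\tilde\psi^\pm$ equation), Gr\"onwall's lemma gives the first-order bound, and the parallel $H^1_0$ estimate gives $\|\tilde\eta^\pm\|_{C([0,1];H^1_0)}\le C(\|z\|_{H^1_0}+|s|)$. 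For $\tilde\Xi$ one adds the $\tilde\psi^\pm$ equations and subtracts twice the $\tilde{\bar\varphi}$ equation: since the initial data $\pm z$ and the time-scale factors $\ell\mp s$ are \emph{symmetric} about $\varphi_0$ and $\ell$, every contribution linear in $z$ or in $s$ cancels, and the Taylor expansion $V'(\tilde\psi^\pm)=V'(\tilde{\bar\varphi})+V''(\tilde{\bar\varphi})\tilde\eta^\pm+\tfrac12 V'''(\xi^\pm)(\tilde\eta^\pm)^2$ exhibits $\tilde\Xi$ as the solution of a linear heat-type equation with zero data, zeroth-order coefficient $-\ell\delta^{-1}V''(\tilde{\bar\varphi})$, and a quadratically small inhomogeneity. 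This last point is exactly where the restriction to $H^1_0$ — rather than the $L^2$ of \cite{Cannarsa-Frankowska2} — makes the proof easy: by the one-dimensional embedding $H^1_0\hookrightarrow L^\infty$, $\|(\tilde\eta^\pm)^2\|=\|\tilde\eta^\pm\|_{L^4}^2\le C\|\tilde\eta^\pm\|_{H^1_0}^2\le C(\|z\|_{H^1_0}^2+s^2)$, while the remaining terms are products of $|s|$ with first-order quantities and hence also $O(\|z\|_{H^1_0}^2+s^2)$; Gr\"onwall with zero initial data then yields the second-order bound. Inserting both bounds together with $\|\tilde{\bar\varphi}(1)-\varphi_-\|\le ER+F+\|\varphi_-\|$ into the expansion of $g$ closes the estimate.

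I expect the main obstacle to be keeping $C$ uniform as one of the perturbed times approaches $T$: as $s$ approaches $T-t_0$, the horizon $\ell\mp s$, with it the time-scale factor, and the associated parabolic-regularity constant degenerate to $0$, and the crude estimates above pick up spurious factors such as $s^2/(\ell-s)$, which — since $\varphi_0$ is only in $H^1_0$, not $H^2$ — do not obviously disappear. This short-horizon regime has to be handled separately, using that there $u$ is a controlled perturbation of the smooth terminal cost $g$: expanding $g(\varphi(T))$ about $g(\varphi_0)$ along the flow over the short interval $[t_0,T]$ (with controls bounded by Theorem \ref{thm:boundedcontrol}) shows that the second difference of $u$ in $(z,s)$ is again $O(\|z\|_{H^1_0}^2+s^2)$ there, and patching this with the interior estimate gives semiconcavity on all of $H^1_0(0,1)\times[0,T]$. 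The remaining work — the energy estimates above — is routine precisely because one is in $H^1_0$, which is the sense in which this proof is easier than the $L^2$ result of \cite{Cannarsa-Frankowska2}.
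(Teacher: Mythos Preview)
Your approach is essentially the paper's: choose an optimal control at the midpoint, push it through the two perturbed problems via an affine time reparametrisation so the running costs cancel exactly, derive a first-order $H^1_0$ bound on the differences $\tilde\eta^\pm$ by energy/Gr\"onwall, then a second-order $L^2$ bound on their sum $\tilde\Xi$ using the Taylor expansion of $V'$ and the Sobolev control of $\|\tilde\eta^\pm\|_{L^4}$ by $\|\tilde\eta^\pm\|_{H^1_0}$, and finally plug both into the quadratic structure of $g$.

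The one place where the paper proceeds differently is precisely the point you flag as the main obstacle. Rather than treating the short-horizon regime $\ell\mp s\to 0$ by a separate expansion of $u$ about $g$, the paper sidesteps it in two strokes: it proves the second-difference inequality only for $t^1,t^2$ in a fixed closed subinterval $I\subset[0,T)$ (this is all local semiconcavity requires, and $u(\cdot,T)=g$ is trivially smooth), and it exploits autonomy to extend $u$ to $H^1_0\times(-\infty,T]$ and then translate the midpoint time to $0$. After translation the rescaling factors are $(T\mp h)/T$ with $|h|$ bounded in terms of $I$, so they never degenerate and all parabolic constants stay uniform. Your separate short-horizon argument would also work, but the paper's device is cheaper and avoids the extra case analysis.
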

\begin{proof}
It will be shown that for every constant $B$, every closed interval $I
\subset [0,T)$, and all starting positions
$(\varphi_0^1,t^1)$ and $(\varphi_0^2,t^2)$ with
$||\varphi_0^1||_{H_0^1(0,1)} +||\varphi_0^2||_{H_0^1(0,1)} \leq B$
  and $t^1,t^2 \in I$, there is a constant C such that 
\begin{equation*}
u(\varphi^1_0,t^1)+u(\varphi^2_0,t^2)-2u\Big(\frac{\varphi^1_0+\varphi^2_0}{2},\frac{t^1+t^2}{2}\Big)
\leq C (||\varphi^1_0-\varphi^2_0||^2_{H_0^1}+|t^1-t^2|^2).
\end{equation*}
In order to
keep  constants simple we use that $u$ may be defined in $H_0^1(0,1)
\times (-\infty, T]$, so that we may set $t^1=h$ and $t^2=-h$, and realize that the
result for other times follows analogously. In this proof we let $C$
be any constant which may depend on $B$.

Let $\alpha:[0,T]\rightarrow L^2$ be an optimal control for the cost
functional $v_{\frac{\varphi^1_0+\varphi^2_0}{2},0}$ defined in
\eqref{eq:costfunctional}, and let $\varphi^3:[0,T]\rightarrow H_0^1$ be
the corresponding state. 
Define controls for solutions starting in $(\varphi^1_0,h)$ and
$(\varphi^2_0,-h)$ by dilations of $\alpha$ as
\begin{align*}
\alpha^1(t)&=\alpha\Big(T\frac{t-h}{T-h}\Big), \\
\alpha^2(t)&=\alpha\Big(T\frac{t+h}{T+h}\Big), \\
\end{align*}
and let the corresponding states be denoted $\varphi^1:[h,T]\rightarrow H_0^1$ with $\varphi^1(h)=\varphi^1_0$ and
$\varphi^2:[-h,T]\rightarrow H_0^1$ with $\varphi^2(-h)=\varphi^2_0$.
The evolution equation \eqref{eq:flow} for $\varphi^1$ and $\varphi^2$ is
now transformed to the interval $[0,T]$. The following equations are
thereby obtained:
\begin{align*}
\varphi^1_t &= \frac{T-h}{T}(\delta \varphi^1_{xx}-\delta^{-1} V'(\varphi^1)+\alpha), \quad
\varphi^1(0)=\varphi^1_0, \\
\varphi^2_t &= \frac{T+h}{T}(\delta\varphi^2_{xx}-\delta^{-1}V'(\varphi^2)+\alpha), \quad
\varphi^2(0)=\varphi^2_0, \\
\varphi^3_t &= \delta\varphi^3_{xx}-\delta^{-1}V'(\varphi^3)+\alpha, \quad
\varphi^3(0)=\frac{\varphi^1_0+\varphi^2_0}{2}. \\
\end{align*}

The function 
\begin{equation*}
z(t)=\varphi^1(t)+\varphi^2(t)-2\varphi^3(t)
\end{equation*}
is now introduced. We will obtain a bound for $||z(T)||$. 
The equation solved by $z$ is
\begin{multline}\label{eq:z}
z_t=\delta z_{xx}-\delta^{-1}\big(
V'(\varphi^1)+V'(\varphi^2)-2V'(\varphi^3)\big)\\
+\frac{\delta h}{T}(\varphi^2-\varphi^1)_{xx}
+\frac{\delta h}{T}\big( V'(\varphi^1)-V'(\varphi^2)\big).
\end{multline}
It is therefore necessary to find a bound for $\varphi^1-\varphi^2$. The
evolution equation for $\varphi^1-\varphi^2$ is
\begin{multline}\label{eq:phidiff}
(\varphi^1-\varphi^2)_t=\delta (\varphi^1-\varphi^2)_{xx}-\delta^{-1}\big(V'(\varphi^1)-V'(\varphi^2)\big)\\
-\frac{\delta h}{T}(\varphi^1+\varphi^2)_{xx}
  + \frac{\delta h}{T}\big(V'(\varphi^1)+V'(\varphi^2)\big) -\frac{2h}{T}\alpha.
\end{multline}
After the inner product is taken with $\varphi^1-\varphi^2$ the following
inequality is obtained:
\begin{multline*}
\half \frac{d}{dt}||\varphi^1-\varphi^2||^2 \leq \delta^{-1}|V''| \cdot
||\varphi^1-\varphi^2||^2 + \frac{\delta h}{T} ||\varphi^1_{xx}+\varphi^2_{xx}||\cdot
||\varphi^1-\varphi^2|| \\
+ \frac{\delta^{-1} h}{T} ||V'(\varphi^1)+V'(\varphi^2)|| \cdot
||\varphi^1-\varphi^2|| + \frac{2h}{T} ||\alpha||\cdot ||\varphi^1-\varphi^2||,
\end{multline*}
and hence
\begin{multline*}
\frac{d}{dt}||\varphi^1-\varphi^2|| \leq \delta^{-1}|V''| \cdot
||\varphi^1-\varphi^2|| + \frac{\delta h}{T}
||\varphi^1_{xx}+\varphi^2_{xx}|| \\
+ \frac{\delta^{-1} h}{T} ||V'(\varphi^1)+V'(\varphi^2)|| + \frac{2h}{T} ||\alpha||.
\end{multline*}
Thus, by Gr\"onwall's Lemma,
\begin{multline*}
||\varphi^1(t)-\varphi^2(t)|| \leq
  e^{\delta^{-1}|V''|T}||\varphi^1(0)-\varphi^2(0)||+\\
e^{\delta^{-1}|V''|T}\frac{h}{T} \int_0^T
  \big(\delta||\varphi^1_{xx} + \varphi^2_{xx}|| + \delta^{-1}||V'(\varphi^1)+V'(\varphi^2)|| + 2||\alpha||\big)dt.
\end{multline*}
Since $||\varphi_0^1||_{H_0^1(0,1)}+||\varphi_0^2||_{H_0^1(0,1)} \leq B$ it follows that 
$\varphi^1$ and $\varphi^2$ are bounded by a constant $C$ in
$L^2(0,T;H^2)$;
see \cite{Evans}. Together with the fact that $V'$ is bounded this
implies that 
\begin{equation}\label{eq:phidrift}
||\varphi^1(t)-\varphi^2(t)|| \leq C(||\varphi^1_0-\varphi^2_0|| + h), \quad
  \text{for all } 0 \leq t \leq T.
\end{equation}
Equation \eqref{eq:phidiff} is now used once again
together with the fact that $|V'(\varphi^1)-V'(\varphi^2)| \leq
|V''|\cdot|\varphi^1-\varphi^2|$ and Theorem 5 on page 360 in \cite{Evans}, to
draw the conclusion that
\begin{equation}\label{eq:H1difference}
\esssup_{0 \leq t \leq T}||\varphi^1(t)-\varphi^2(t)||_{H_0^1} + ||\varphi^1_{xx}-\varphi^2_{xx}||_{L^2(0,T;L^2)} \leq
  C(||\varphi^1_0-\varphi^2_0||_{H_0^1} + h).
\end{equation}
There is also the term $V'(\varphi^1)+V'(\varphi^2)-2V'(\varphi^3)$ in
\eqref{eq:z}. This can be handled as
\begin{multline}\label{eq:Vsplit}
|V'(\varphi^1)+V'(\varphi^2)-2V'(\varphi^3)|\\
 \leq
 |V'(\varphi^1)+V'(\varphi^2)-2V'(\frac{\varphi^1+\varphi^2}{2})| +
 2|V'(\frac{\varphi^1+\varphi^2}{2}) - V'(\varphi^3)| \\
\leq \frac{|V'''|}{2}
 |\varphi^1-\varphi^2|^2 + |V''|\cdot |z|.
\end{multline}
We are now ready to take the inner product with $z$ in \eqref{eq:z} to
obtain
\begin{multline*}
\half\frac{d}{dt} ||z||^2 \leq \frac{\delta^{-1}|V'''|}{2} \int_0^1
(\varphi^1-\varphi^2)^2|z|dx + \delta^{-1}|V''|\cdot||z||^2 \\
+
\frac{\delta h}{T}||\varphi^1_{xx}-\varphi^2_{xx}|| \cdot ||z|| +
|V''| \frac{\delta^{-1} h}{T}
||\varphi^1-\varphi^2|| \cdot ||z||,
\end{multline*}
which implies
\begin{multline*}
\frac{d}{dt} ||z|| \leq \delta^{-1}|V''|\cdot||z|| +
\frac{\delta^{-1}|V'''|}{2}||\varphi^1-\varphi^2||_{L^4(0,1)}^2 \\
+ \frac{\delta
  h}{T}||\varphi^1_{xx}-\varphi^2_{xx}||  + |V''| \frac{\delta^{-1} h}{T}
||\varphi^1-\varphi^2||.
\end{multline*}
By Gr\"onwall's Lemma
\begin{multline}\label{eq:zdrift}
||z(T)|| \leq e^{\delta^{-1}|V''|T} \int_0^T
\big(\frac{\delta^{-1}|V'''|}{2}||\varphi^1-\varphi^2||_{L^4(0,1)}^2 \\
+ \frac{\delta h}{T}||\varphi^1_{xx}-\varphi^2_{xx}||  + |V''| \frac{\delta^{-1}h}{T}
||\varphi^1-\varphi^2||)dt. 
\end{multline}
Sobolev's inequality gives that $||\varphi^1-\varphi^2||_{L^4(0,1)}
\leq C||\varphi^1-\varphi^2||_{H^1_0(0,1)}$, so \eqref{eq:H1difference}
together with \eqref{eq:zdrift}
implies that 
\begin{equation*}
||z(T)|| \leq C \big( ||\varphi^1_0-\varphi^2_0||^2_{H^1_0(0,1)} + h^2 \big).
\end{equation*}
This fact is now used to show that 
\begin{equation}\label{eq:vconcave}
v_{\varphi^1_0,h}(\alpha^1)+v_{\varphi^2_0,-h}(\alpha^2)-2v_{\frac{\varphi^1_0+\varphi^2_0}{2},0}(\alpha)
\leq C \big(||\varphi^1_0-\varphi^2_0||^2 + h^2 \big),
\end{equation}
The terminal cost is treated first. We use
the notation $\varphi_T \equiv \varphi(T)$ and perform a simple rearrangement:
\begin{multline}\label{eq:terminalcosts}
|g(\varphi^1_T)+g(\varphi^2_T)-2g(\varphi^3_T)| \\ 
= 
\big| \frac{K}{2}||\varphi^1_T -\varphi^2_T||^2 + K \big(\frac{\varphi^1_T+\varphi^2_T}{2}+
\varphi^3_T -2 \varphi_-, \varphi^1_T+\varphi^2_T-2\varphi^3_T\big) \big| \\
\leq C \big(||\varphi^1_0-\varphi^2_0||^2_{H^1_0} + h^2 \big),
\end{multline}
where \eqref{eq:phidrift}, \eqref{eq:zdrift}, and the fact that
$\varphi^1_T$, $\varphi^2_T$ and $\varphi^3_T$, are bounded are used. The
running costs must also be treated. A simple calculation shows that
\begin{equation}\label{eq:runningcosts}
\int_h^T ||\alpha^1||^2 dt + \int_{-h}^T ||\alpha^2||^2 dt -2 \int_0^T
||\alpha||^2 dt =0.
\end{equation}
The desired result \eqref{eq:vconcave} follows from
\eqref{eq:terminalcosts} and \eqref{eq:runningcosts}.
\end{proof}
\section{Discretization in space}\label{sec:spatialdiscr}
We shall compare the value functions associated with our original
problem and a finite element approximation. The value function we want
to approximate is
$u$ defined in \eqref{eq:valuefunction}.
The approximate value function is, similarly as in \eqref{eq:valuefunction},
\begin{equation}\label{eq:approxu}
\bar u (\bar\varphi_0,t_0)=\inf_{\bar\alpha \in L^2(t_0,T;V)} \Big\{ g(\bar \varphi(T))+
\int_t^T h(\bar \alpha)ds\ :\   \bar\varphi(t_0)=\bar\varphi_0\Big\},
\end{equation}
where $\bar\varphi \in C(t_0,T;V)$ solves
\begin{equation}\label{eq:approxphievol}
(\bar\varphi_t,v)=-\delta (\bar\varphi_x,v_x)+(-\delta^{-1}
  V'(\bar\varphi)+\bar\alpha,v), \quad
  \text{for all } v \in V,
\end{equation}
and $V$ is the space of continuous piecewise linear functions on $[0,1]$ which
are zero at $0$ and $1$ and linear on the intervals $(0,\Delta x)$,
$(\Delta x, 2\Delta x)$, and so on. We note that the infima in
(\ref{eq:valuefunction}) and (\ref{eq:approxu}) are attained, using
Corollary \ref{cor:existence} for the original problem
\eqref{eq:valuefunction} and the easier theory in
\cite{Cannarsa-Sinestrari} for the approximation problem
\eqref{eq:approxu}. Therefore  we
can replace the infima with minima. 
The same sort of convergence analysis which is presented here is
performed for problems of optimal design in \cite{Carlsson-Sandberg-Szepessy}.

We now introduce some
notation needed in Theorem \ref{thm:valueerror}. We denote by
$P$ the $L^2$
projection from $L^2(0,1)$ to $V$ or from $L^2(0,1)\times\Re$ to $V
\times \Re$. Let $\Omega$ be an open subset of a Hilbert space $X$,
and $z:\Omega \rightarrow \Re$. For any $x_0 \in \Omega$ the
superdifferential $D^+ z(x_0)$ is defined as follows:
\begin{equation*}
D^+ z(x_0) = \Big\{ p \in X \big| \limsup_{x \rightarrow x_0}
  \frac{z(x)-z(x_0)-(p,x-x_0)_X}{|x-x_0|_X} \leq 0 \Big\}.
\end{equation*} 
The Hamiltonian, $H$, for the optimal
control problem \eqref{eq:valuefunction} is given by
\begin{equation}\label{eq:Hamiltonian}
H(\lambda,\varphi)=-\delta(\lambda_x,\varphi_x) -\delta^{-1}\big(\lambda,V'(\varphi)\big)-||\lambda||^2/2,
\end{equation} 
for all $\lambda$, $\varphi \in H_0^1(0,1)$. The restrictions of $u$ to
the subspaces $V \times [0,T]$ and $H_0^1 \times [0,T]$ are denoted
$u_V$ and $u_H$.
\begin{thm}\label{thm:valueerror}
Let $\varphi_0 \in V$. 
Denote an optimal pair
(control and state) for $u(\varphi_0,t_0)$ by $\alpha:[t_0,T]\rightarrow
L^2$ and $\varphi:[t_0,T]\rightarrow L^2$ and an optimal pair
for $\bar u(\varphi_0,t_0)$ by $\bar\alpha:[t_0,T]\rightarrow V$ and
$\bar\varphi:[t_0,T]\rightarrow V$.
Then
\begin{multline}\label{eq:errorrepr}
\int_{t_0}^T \Big(p^*_t(s)+H\big(P p^*_\varphi(s), \bar\varphi(s)\big)\Big)ds \\
\leq \bar u(\varphi_0,t_0)-u(\varphi_0,t_0) \\
\leq g\big(P \varphi(T)\big)-g\big(\varphi(T)\big)+
\int_{t_0}^T \Big(H\big(p^\#_\varphi(s),P \varphi(s)\big)-H\big(p^\#_\varphi(s),\varphi(s)\big)\Big)ds
\end{multline}
where $p^*(s)=\big(p^*_\varphi(s),p^*_t(s)\big) \in L^2(0,1)\times \Re$
is any measurable function with values in $D^+ u_H(\bar\varphi(s),s)$, and
$p^\#(s)=\big(p^\#_\varphi(s),p^\#_t(s)\big) \in V\times \Re$ is any
measurable function with values  in
$D^+ \bar u(P \varphi(s),s)$.
\end{thm}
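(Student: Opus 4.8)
The plan is to obtain the two inequalities in \eqref{eq:errorrepr} separately; no global comparison principle and no discretization error estimate for the flow is needed, everything being carried out locally along a single optimal trajectory. In each case the recipe is: compose the relevant value function with an absolutely continuous trajectory, differentiate along it through an element of the superdifferential, insert the weak form of the corresponding evolution equation tested against (the projection of) that element, and complete a square. A preliminary observation makes the same Hamiltonian \eqref{eq:Hamiltonian} appear in both bounds: for the discrete problem \eqref{eq:approxu}--\eqref{eq:approxphievol} the Hamiltonian is $\bar H(\bar\lambda,\bar\varphi)=\sup_{\bar\alpha\in V}\big(-\delta(\bar\lambda_x,\bar\varphi_x)-\delta^{-1}(\bar\lambda,V'(\bar\varphi))+(\bar\lambda,\bar\alpha)-h(\bar\alpha)\big)$, and since $\bar\lambda\in V$ the supremum is attained at $\bar\alpha=\bar\lambda$, so $\bar H=H$ on $V\times V$.

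\emph{Lower bound.} Take the optimal pair $(\bar\alpha,\bar\varphi)$ for $\bar u(\varphi_0,t_0)$, so $\bar\varphi(t_0)=\varphi_0$, $\bar\varphi$ is absolutely continuous with values in $V\subset H_0^1$, and $\bar u(\varphi_0,t_0)=\int_{t_0}^T h(\bar\alpha)\,ds+g(\bar\varphi(T))$. By Theorem~\ref{thm:semiconcave} the function $u_H$ is semiconcave, hence locally Lipschitz, so $s\mapsto u(\bar\varphi(s),s)$ is absolutely continuous, and for the measurable selection $p^*$ one has $\frac{d}{ds}u(\bar\varphi(s),s)=p^*_t(s)+(p^*_\varphi(s),\bar\varphi_t(s))$ a.e.; integrating and using $u(\cdot,T)=g$ gives $g(\bar\varphi(T))-u(\varphi_0,t_0)=\int_{t_0}^T\big(p^*_t+(p^*_\varphi,\bar\varphi_t)\big)ds$. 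Now test \eqref{eq:approxphievol} with $v=Pp^*_\varphi(s)\in V$; since $\bar\varphi_t(s)\in V$, $(p^*_\varphi,\bar\varphi_t)=(Pp^*_\varphi,\bar\varphi_t)=-\delta((Pp^*_\varphi)_x,\bar\varphi_x)-\delta^{-1}(Pp^*_\varphi,V'(\bar\varphi))+(\bar\alpha,Pp^*_\varphi)$, which by \eqref{eq:Hamiltonian} and a square completion equals $H(Pp^*_\varphi,\bar\varphi)+\tfrac12\|Pp^*_\varphi+\bar\alpha\|^2-h(\bar\alpha)\ge H(Pp^*_\varphi,\bar\varphi)-h(\bar\alpha)$. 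Substituting, and replacing $g(\bar\varphi(T))$ by $\bar u(\varphi_0,t_0)-\int_{t_0}^T h(\bar\alpha)\,ds$, the running costs cancel and the left inequality of \eqref{eq:errorrepr} drops out. Note that the Hamilton--Jacobi equation for $u$ itself is never used here.

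\emph{Upper bound.} This is symmetric but needs one extra ingredient. Take the optimal pair $(\alpha,\varphi)$ for $u(\varphi_0,t_0)$; since $\varphi_0\in V$ we have $P\varphi(t_0)=\varphi_0$, and $P\varphi$ is absolutely continuous into $V$ with $(P\varphi)_t=P\varphi_t$, using that $\varphi\in L^2(t_0,T;H^2)$ and $\varphi_t\in L^2(t_0,T;L^2)$ (appendix, \cite{Evans}). As $\bar u$ is semiconcave, hence locally Lipschitz, on $V\times[0,T)$ (\cite{Cannarsa-Sinestrari}), the chain rule along $P\varphi$ with $p^\#$ and $\bar u(\cdot,T)=g$ gives $g(P\varphi(T))-\bar u(\varphi_0,t_0)=\int_{t_0}^T\big(p^\#_t+(p^\#_\varphi,P\varphi_t)\big)ds$. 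The extra ingredient is the dynamic-programming inequality for $\bar u$: for $p^\#(s)\in D^+\bar u(P\varphi(s),s)$ one has $p^\#_t(s)+\bar H(p^\#_\varphi(s),P\varphi(s))\ge0$, so by $\bar H=H$ on $V$, $p^\#_t(s)\ge-H(p^\#_\varphi(s),P\varphi(s))$ — this step is needed because $p^\#_t$ does not appear in the final bound and must be eliminated. Hence $\bar u(\varphi_0,t_0)\le g(P\varphi(T))+\int_{t_0}^T\big(H(p^\#_\varphi,P\varphi)-(p^\#_\varphi,P\varphi_t)\big)ds$. Since $p^\#_\varphi(s)\in V$, $(p^\#_\varphi,P\varphi_t)=(p^\#_\varphi,\varphi_t)$; testing the weak form of \eqref{eq:flow} with $p^\#_\varphi(s)\in V\subset H_0^1$ and completing a square gives $(p^\#_\varphi,\varphi_t)=H(p^\#_\varphi,\varphi)+\tfrac12\|p^\#_\varphi\|^2+(\alpha,p^\#_\varphi)$. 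Subtracting $u(\varphi_0,t_0)=g(\varphi(T))+\int_{t_0}^T h(\alpha)\,ds$ and using $-\tfrac12\|p^\#_\varphi\|^2-(\alpha,p^\#_\varphi)-\tfrac12\|\alpha\|^2=-\tfrac12\|p^\#_\varphi+\alpha\|^2\le0$ yields the right inequality of \eqref{eq:errorrepr}.

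The routine algebra hides the real work, which is analytic. I expect the main obstacle to be justifying the chain rule / fundamental theorem of calculus for the merely semiconcave functions $u_H$ and $\bar u$ composed with absolutely continuous trajectories: that $D^+u_H$ and $D^+\bar u$ are nonempty and admit measurable selections valued in $L^2\times\Re$ (from Theorem~\ref{thm:semiconcave} and \cite{Cannarsa-Sinestrari}), that the superdifferential inequality is used with the $L^2$ pairing while only $H_0^1$-directions are probed, and that the resulting integrands stay integrable up to $s=T$ despite the possible loss of semiconcavity there (handled by integrating on $[t_0,T-\eps]$ and letting $\eps\to0$, using continuity of both value functions up to $T$ and $u(\cdot,T)=g=\bar u(\cdot,T)$). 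The other point requiring care, though standard once the discrete Hamiltonian is written down, is the $D^+$ form of the dynamic-programming inequality for $\bar u$ together with the identity $\bar H=H$ on $V$; boundedness of the optimal controls from Corollary~\ref{cor:existence} and Theorem~\ref{thm:boundedcontrol} is used throughout to keep the trajectories and their time derivatives in the required spaces.
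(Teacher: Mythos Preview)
Your approach is essentially the paper's: compose each value function with the other problem's optimal trajectory, differentiate through an element of the superdifferential, feed in the evolution equation tested against (the projection of) that element, and use the min/square-completion structure of $H$; for the upper bound you also invoke the viscosity supersolution inequality $p^\#_t+H(p^\#_\varphi,P\varphi)\ge0$ to eliminate $p^\#_t$, exactly as the paper does.

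The one point that needs correction is your chain rule. You write
\[
\frac{d}{ds}u\big(\bar\varphi(s),s\big)=p^*_t(s)+\big(p^*_\varphi(s),\bar\varphi_t(s)\big)\quad\text{a.e.},
\]
and similarly for $\bar u$ along $P\varphi$. For an \emph{arbitrary} measurable selection $p^*(s)\in D^+u_H(\bar\varphi(s),s)$ this equality is false in general: at points where $u_H$ is not G\^ateaux differentiable the superdifferential has many elements, and different choices of $p^*$ cannot all give the same value. What semiconcavity does give, and what the paper proves carefully via the quadratic upper bound \eqref{eq:semiconmodulus} applied to a backward difference quotient, is the one-sided inequality
\[
\frac{d}{ds}u_H\big(\bar\varphi(s),s\big)\ \ge\ p^*_t(s)+\big(p^*_\varphi(s),\bar\varphi_t(s)\big)\quad\text{a.e., for every }p^*\in D^+u_H.
\]
This inequality is exactly the direction you use afterwards (you only ever pass to $\ge$), so your argument survives once you replace ``$=$'' by ``$\ge$'' in both steps. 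But as written the equality claim does not establish the theorem for \emph{any} measurable selection, which is what the statement asserts. Your closing paragraph correctly identifies this differentiation step as the real analytic content; the paper's resolution is precisely the semiconcavity-based one-sided chain rule, not a genuine equality.
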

\begin{proof}
We divide the proof into two steps: In \emph{Step 1} we obtain a lower bound for
$\bar u(\varphi_0,t_0)-u(\varphi_0,t_0)$, and in \emph{Step 2} we do
likewise for $u(\varphi_0,t_0)-\bar u(\varphi_0,t_0)$. 

\emph{Step 1.} \quad Using the definitions (\ref{eq:valuefunction}) and
(\ref{eq:approxu}) for $u$ and $\bar u$, the fact that
$\bar u \big(\bar\varphi(T),T\big)=g\big(\bar\varphi(T)\big)$, and that $u_H$ is the
restriction of $u$ to $H_0^1 \times [0,T]$, we can write 
\begin{equation}\label{eq:udiff1}
\begin{split}
\bar u(\varphi_0,t_0)-u(\varphi_0,t_0)&=u_H\big(\bar\varphi(T),T\big)-u_H\big(\bar\varphi(t_0),t_0\big)+\int_{t_0}^T
h(\bar\alpha)ds \\
&=\int_t^T \Big(\frac{d}{ds} u_H\big(\bar\varphi(s),s\big)+h\big(\bar\alpha(s)\big)\Big)ds,
\end{split}
\end{equation}
since $u_H(\bar\varphi(s),s)$ is absolutely continuous as $u$ is
locally Lipschitz continuous (see \cite{Cannarsa-Frankowska2}) and $\bar\varphi$ is absolutely continuous
as a function of $s$.

We now use that $u_H$ is a semiconcave function (with linear modulus),
so that for every $p \in D^+ u_H(z_0)$ there exists a constant $K$
such that
\begin{equation}\label{eq:semiconmodulus}
u_H(z)-u_H(z_0)- (p,z-z_0 ) \leq K |z-z_0|^2
\end{equation}
for all $z$ in a neighborhood of $z_0 \in H_0^1(0,1) \times (0,T)$; see \cite{Cannarsa}. Let now $p^*(s)=\big(p^*_\varphi(s),p^*_t(s)\big)$ be any element
in $D^+u_H\big(\bar\varphi(s),s\big) \cap \big(L^2(0,1)\times \Re\big)$.
Consider a point $s$ where the derivative
$\bar\varphi_t(s)$ exists. 
A lower bound for the backward derivative of $u_H\big(\bar\varphi(s),s\big)$ will
now be obtained. We split the difference quotient approximating the
backward derivative at $s$:
\begin{align*}
&\frac{u_H(\bar\varphi(s),s)-u_H(\bar\varphi(s-\Delta s),s-\Delta s)}{\Delta s}
\\
=& -\big[ u_H\big(\bar\varphi(s-\Delta s),s-\Delta
  s\big)-u_H\big(\bar\varphi(s),s\big)\\
&-p^*_t(s)(-\Delta s) -
  \big(p^*_\varphi(s),\bar\varphi(s-\Delta s)-\bar\varphi(s)\big)\big]/{\Delta s}\\
& +p^*_t(s) + \Big(p^*_\varphi(s),\frac{\bar\varphi(s)-\bar\varphi(s-\Delta s)}{\Delta
s}\Big).
\end{align*}
If equation \eqref{eq:semiconmodulus} is used together with the
fact that $\bar\varphi$ is differentiable at $s$ it can be deduced that
the quotient involving the square bracket in the above equation is greater than or equal to $-K'
\Delta s$, for some constant $K'$. Letting $\Delta s \rightarrow 0$ we
see that 
\begin{equation*}
\frac{d}{ds}u_H \big(\bar\varphi(s),s\big) \geq p^*_t(s) +\big(p^*_\varphi(s),\bar\varphi_t(s)\big),
\end{equation*} 
where (temporarily) $d/ds$ denotes the backward derivative.
In order to be able to apply (\ref{eq:approxphievol}) we
note that $\bar\varphi_t \in V$ implies
\begin{equation*}
(p^*_\varphi,\bar\varphi_t)=(P p^*_\varphi,\bar\varphi_t).
\end{equation*}
Thus the integrand in
(\ref{eq:udiff1}), using the backward derivative,  can be bounded from
below as follows:
\begin{multline*}
\frac{d}{ds} u_H\big(\bar\varphi(s),s\big)+h\big(\bar\alpha(s)\big) 
\geq p^*_t(s)+\big(\bar\varphi_t(s),P p^*_\varphi(s)\big)+h\big(\bar\alpha(s)\big) \\
= p^*_t(s)-\delta\big(\bar\varphi_{x}(s),(P
p^*_\varphi(s))_x\big)-\delta^{-1}\Big(V'\big(\bar\varphi(s)\big),P p^*(s)\Big) \\
+\big(\bar\alpha(s), P p^*_\varphi(s)\big) +
\half ||\bar\alpha(s)||^2 \\
\geq p^*_t(s)+ H\big(P p^*_\varphi(s),\bar\varphi(s)\big),
\end{multline*}
since
\begin{equation*}
H(\lambda,\varphi)=\min_{\alpha \in L^2(0,1)} \Big(
-\delta(\varphi_x,\lambda_x)
-\delta^{-1}\big(V'(\varphi),\lambda\big)+(\alpha,\lambda) + \half ||\alpha||^2\Big).
\end{equation*}
The double sided and the
backward time derivatives of $u_H(\bar\varphi(s),s)$ differ on a set
of measure zero, so there is no problem in using the backward
derivative in (\ref{eq:udiff1}).

\emph{Step 2.} Lower bound for $u(\varphi_0,t_0)-\bar u(\varphi_0,t_0)$. It is
now assumed that $\varphi_0 \in V$. Similarly as in \emph{Step 1} we
write, noting that $\bar u$ is only defined on $V\times [0,T]$,
\begin{equation}\label{eq:udiffg}
\begin{split}
& u(\varphi_0,t_0)-\bar u(\varphi_0,t_0) \\
&= g\big(\varphi(T)\big)-g\big(P \varphi(T)\big) +
\bar u\big(P \varphi(T),T\big)-\bar u\big(P \varphi(t_0),t_0\big) +
\int_{t_0}^T h\big(\alpha(s)\big)ds \\
&=g\big(\varphi(T)\big)-g\big(P \varphi(T)\big) +\int_{t_0}^T \Big(\frac{d}{ds} \bar
u\big(P \varphi(s),s\big) +h\big(\alpha(s)\big)\Big)ds =: I + II.
\end{split}
\end{equation}
A lower bound for part $II$ is  obtained by splitting the
difference quotient approximating the backward derivative at $s$: 
\begin{equation}\label{eq:bdsplit}
\begin{split}
&\frac{\bar u\big(P \varphi(s),s\big)-\bar u\big(P \varphi(s-\Delta s),s-\Delta s\big)}{\Delta s}
\\
=& -\big[\bar u\big(P \varphi(s-\Delta s),s-\Delta
  s\big)-\bar u\big(P \varphi(s),s\big)\\
&-p^\#_t(s)(-\Delta s) -
  \big(p^\#_\varphi(s),P \varphi(s-\Delta s)-P \varphi(s)\big)\big]/{\Delta s}\\
& +p^\#_t(s) + \Big(p^\#_\varphi(s),\frac{P \varphi(s)-P \varphi(s-\Delta s)}{\Delta
s}\Big).
\end{split}
\end{equation}
The derivative $\varphi_t(s)$ exists for $t_0 < s <T$ by the theory in
e.g.\ Chapter 3 in \cite{Henry}, where we have used also that the
control, $\alpha=-\lambda$, solves an adjoint backward parabolic PDE,
and therefore is H\"older continuous. It is now used that $||P x||
\leq ||x||$, $\bar u$ is semiconcave (see e.g.\ \cite{Cannarsa-Sinestrari}), and that
\begin{equation*}
\Big(p^\#_\varphi(s),\frac{P \varphi(s)-P \varphi(s-\Delta s)}{\Delta
s}\Big) = \Big(p^\#_\varphi(s),\frac{\varphi(s)-\varphi(s-\Delta s)}{\Delta
s}\Big)
\end{equation*}
in  equation \eqref{eq:bdsplit}, so that we have, similarly as in \emph{Step 1}, that 
\begin{equation*}
\frac{d}{ds}\bar u (P \varphi(s),s) \geq p^\#_t(s) +(p^\#_\varphi(s),\bar\varphi_t(s)).
\end{equation*}
By further using Chapter 3 in \cite{Henry} it is known that equation
\eqref{eq:flow} is satisfied in the $L^2$ sense, with $\varphi(s) \in
H^2(0,1) \cap H_0^1(0,1)$, for $t_0 < s <T$. Similarly as in
\emph{Step 1}, the integrand in \eqref{eq:udiffg}, using the backward
derivative can be bounded from below:
\begin{equation*}
\frac{d}{ds} \bar u(P \varphi(s),s) + h(\alpha(s)) \geq 
p^\#_t(s) + H(p^\#_\varphi(s),\varphi(s)).
\end{equation*}
As $\bar u$ is a viscosity solution to the Hamilton-Jacobi equation
for the discrete value function it holds that
\begin{equation*}
p^\#_t(s) + H(p^\#_\varphi(s),P \varphi(s)) \geq 0,
\end{equation*}
which proves the second inequality in \eqref{eq:errorrepr}.
%
\end{proof}
Theorem \ref{thm:valueerror} will be used when the error between the
original and the approximate value functions is computed. For this to
work some knowledge about the superdifferential $D^+ u_H$ is
needed. The dual equation
\begin{subequations}\label{eq:l}
\begin{align}
-\lambda_t &= \delta \lambda_{xx} - \delta^{-1}\lambda V''(\varphi), \label{eq:l1}\\
 \lambda(T)&=2K \big( \varphi(T) - \varphi^-\big), \label{eq:l2}
\end{align}
\end{subequations}
is introduced. Let $\alpha$ and $\varphi$ be optimal pairs as in Theorem
\ref{thm:valueerror}. According to Theorem \ref{thm:boundedcontrol} it
is possible to choose a bounded control. For the mild solution $\lambda$  to \eqref{eq:l} 
there exists, according to Theorem 3.1 in \cite{Cannarsa-Frankowska1}, a
subset $\mathcal{L} \subset [t_0, T]$, of full measure, such that, for
all $t \in \mathcal{L}$,
\begin{equation}\label{eq:valuesinDplus}
\varphi(t) \in H_0^1(0,1) \cap H^2(0,1) \implies
\Big( \lambda(t), -H\big(\lambda(t), \varphi(t)\big) \Big) \in D^+ u \big(\varphi(t),t\big).
\end{equation} 
By the same theorem, it holds that for almost every $t \in [t_0,T]$,
\begin{equation}\label{eq:lambdaalpha}
\big( \lambda(t),\alpha(t)\big) + \frac{||\alpha||^2}{2} =
\min_{\substack{a \in L^2(0,1) \\ ||a|| \leq L}} \Big(
  \big(\lambda(t),a \big) + \frac{||a||^2}{2} \Big),
\end{equation}
where $L$ is the bound on the control from Theorem
\ref{thm:boundedcontrol}. This bound is included in order to be able
to use the aforementioned Theorem 3.1 in \cite{Cannarsa-Frankowska1},
but since $\alpha$ and $\lambda$ correspond to the original problem
\eqref{eq:valuefunction} with no bound we could also have used any
constant greater than $L$ in \eqref{eq:lambdaalpha}. Hence we see that
\eqref{eq:lambdaalpha} holds also without the requirement that $a$ is
bounded. From this we draw the conclusion that $\lambda(t) =
-\alpha(t)$ a.e. The mild solutions $\varphi$ and $\lambda$ therefore
satisfy the system
\begin{subequations}\label{eq:mildsols}
\begin{align}
\varphi(t) &= S(t-t_0)\varphi_0 + \int_{t_0}^t S(t-s) \big(-\delta^{-1}
V'(\varphi(s)) -\lambda(s) \big)ds, \label{eq:mildsols1} \\
\varphi(t_0) &=\varphi_0, \label{eq:mildsols2} \\
\lambda(t) &= S(T-t)\lambda(T) - \delta^{-1} \int_{t}^T S(s-t) \big(
\lambda(s)V''(\varphi(s)) \big)ds, \label{eq:mildsols3} \\
\lambda(T) &= 2K(\varphi(T)-\varphi^-), \label{eq:mildsols4}
\end{align}
\end{subequations}
where $S(t)$ is the contraction semigroup of linear operators
generated by $\delta \frac{d^2}{dx^2}$, \eqref{eq:mildsols1} is
equation \eqref{eq:phimild} with $\lambda = -\alpha$, and
\eqref{eq:mildsols3} is the equation for mild solutions to
\eqref{eq:l1}; see e.g.\ Theorem 3.1 in \cite{Cannarsa-Frankowska1}. 
Following the notation in \cite{Henry} we introduce
\begin{equation*}
A \equiv -\delta \frac{d^2}{dx^2}.
\end{equation*}
The operator $A$ has eigenvalues $k_n = \delta \pi^2 n^2$, $(n=1,2,3,
\ldots )$ with corresponding eigenfunctions $\psi_n(x) = \sqrt{2} \sin
(n\pi x)$. Fractional powers of $A$ may be defined using this:
\begin{equation}\label{eq:Agamma}
A^\gamma \varphi = \sum_{n=1}^\infty k_n^\gamma(\psi_n,\varphi)\psi_n,
\end{equation}
for $\gamma \geq 0$. The domain for $A^\gamma$ is given by
\begin{equation}\label{eq:Agammadomain}
D(A^\gamma) = \Big\{ \varphi \in L^2(0,1): \ \sum_{n=1}^\infty k_n^{2\gamma}
(\psi_n,\varphi)^2 < \infty \Big\}.
\end{equation}
For $\gamma=1$ and $\gamma=1/2$ we have that $||A \varphi|| = \delta ||\varphi_{xx}||$ and
$||A^{1/2}\varphi|| = \sqrt{\delta} ||\varphi_x||$.
We state a few useful properties of the fractional powers of $A$,
which may be found in e.g.\ \cite{Henry}.
For any $K>0$ and all $0 < \gamma < K$ there exists a constant $C$
such that 
\begin{subequations}\label{eq:Aproperties}
\begin{align}
||A^\gamma S(t)|| &\leq C t^{-\gamma}, \quad \text{for $t>0$,}
   \label{eq:Aproperties1}\\
\intertext{and if $0 < \gamma \leq 1, \varphi \in D(A^\gamma)$,}
||(S(t)-I)\varphi|| & \leq \frac{1}{\gamma}C t^\gamma ||A^\gamma
\varphi||. \label{eq:Aproperties2} \\
\intertext{It also holds that}
A^{\gamma^1}A^{\gamma^2}&=A^{\gamma^2}A^{\gamma^1}=A^{\gamma^1+\gamma^2}
\ \text{ on $D(A^{\gamma^1+\gamma^2})$ when $\gamma^1, \gamma^2 \geq
    0$,} \label{eq:Aproperties3} \\
A^\gamma S(t)&=S(t) A^\gamma \ \text{ on $D(A^\gamma)$, $t>0$.} \label{eq:Aproperties4}
\end{align}
\end{subequations}

In  the following Theorem it is shown how an element in $D^+
u_H(\bar\varphi(s),s)$ can be obtained, which is needed according to Theorem \ref{thm:valueerror}.
\begin{thm}\label{thm:HJsatisfied}
When $\varphi_0 \in V$ and $0 \leq s \leq T$ one element in $D^+
u_H(\varphi_0,s)$ is given by
\begin{equation*}
\Big( \lambda(s), -H\big(\lambda(s),\varphi_0\big) \Big),
\end{equation*}
where $\lambda$ is a mild solution to  \eqref{eq:l} 
and $\varphi$ is an optimal solution to \eqref{eq:valuefunction} with
$t_0=s$.
\end{thm}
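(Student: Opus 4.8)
The plan is to show that the pair $\big(\lambda(s),-H(\lambda(s),\varphi_0)\big)$ lies in $D^+ u_H(\varphi_0,s)$ by exploiting the relation \eqref{eq:valuesinDplus} from \cite{Cannarsa-Frankowska1}, which already gives membership in $D^+u$ for almost every time along the optimal trajectory, and then transporting this information to the specific time $s$ and restricting to $H_0^1$. First I would fix $\varphi_0 \in V$ and let $\varphi,\lambda$ be the optimal state and corresponding dual solution of the system \eqref{eq:mildsols} started at $t_0 = s$; by Theorem \ref{thm:boundedcontrol} the control is bounded, so \eqref{eq:valuesinDplus} applies and there is a full-measure set $\mathcal L \subset [s,T]$ on which $\big(\lambda(t),-H(\lambda(t),\varphi(t))\big) \in D^+u(\varphi(t),t)$ whenever $\varphi(t) \in H_0^1 \cap H^2$. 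The point $t=s$ itself is generally not in $\mathcal L$, so I would take a sequence $t_k \downarrow s$ with $t_k \in \mathcal L$ and $\varphi(t_k)\in H_0^1\cap H^2$ — such times exist by the smoothing of the parabolic flow (Chapter 3 in \cite{Henry}) — and pass to the limit.

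The key steps in order are: (1) record that along the optimal pair $\varphi$ is continuous into $H_0^1$ (indeed into $H^2$ on $(s,T]$) and $\lambda$ is continuous into $L^2$, so $\lambda(t_k) \to \lambda(s)$ in $L^2$ and, using that $\varphi(t_k)\to\varphi_0$ in $H_0^1$ together with boundedness of $V',V''$, that $H(\lambda(t_k),\varphi(t_k)) \to H(\lambda(s),\varphi_0)$; (2) invoke the closedness of the superdifferential graph under such convergence — here one must be careful, since $D^+$ is upper semicontinuous only when the argument varies in a fixed space, so I would use the semiconcavity of $u$ (Theorem \ref{thm:semiconcave}, or the stronger $L^2$-version in \cite{Cannarsa-Frankowska2}), which guarantees the linear-modulus inequality \eqref{eq:semiconmodulus} with a constant uniform on bounded sets and hence allows passing $p_k := \lambda(t_k) \in D^+u(\varphi(t_k),t_k)$ to $\lambda(s) \in D^+u(\varphi_0,s)$; (3) finally, restrict to the subspace $H_0^1 \times [0,T]$: if $p \in D^+u(\varphi_0,s)$ as an element of $L^2 \times \Re$, then testing the defining $\limsup$ inequality only against $z \to (\varphi_0,s)$ within $H_0^1 \times \Re$ and using $|z-z_0|_{L^2} \le C|z-z_0|_{H_0^1}$ shows $p \in D^+u_H(\varphi_0,s)$. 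Combining, $\big(\lambda(s),-H(\lambda(s),\varphi_0)\big)\in D^+u_H(\varphi_0,s)$.

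The main obstacle is step (2): the superdifferential is being evaluated at a moving base point $(\varphi(t_k),t_k)$, and plain upper semicontinuity of $D^+$ does not directly deliver the limit because the "slope" entries $p_k$ live in $L^2$ while $u$ is only known to be semiconcave (with linear modulus) on $H_0^1 \times [0,T]$, not on all of $L^2 \times [0,T]$ in the weaker statement proved here. I expect to resolve this either by appealing to the $L^2$-semiconcavity result of \cite{Cannarsa-Frankowska2} (which makes the closedness argument immediate), or — if one wants to stay self-contained — by writing out \eqref{eq:semiconmodulus} at base point $(\varphi(t_k),t_k)$, i.e. $u(z) - u(\varphi(t_k),t_k) - (p_k, z-(\varphi(t_k),t_k)) \le K|z - (\varphi(t_k),t_k)|^2$ for all $z$, then letting $k\to\infty$ with $z$ held fixed, using $u$'s local Lipschitz continuity and the convergences from step (1) to obtain $u(z) - u(\varphi_0,s) - (\lambda(s), z-(\varphi_0,s)) \le K|z-(\varphi_0,s)|^2$, which forces $\lambda(s) \in D^+u(\varphi_0,s)$. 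A minor secondary point to check is that the second component of the superdifferential element produced by \eqref{eq:valuesinDplus} is exactly $-H(\lambda(t),\varphi(t))$ and converges to $-H(\lambda(s),\varphi_0)$, which follows from continuity of $H$ in both arguments under the bounds on $V$.
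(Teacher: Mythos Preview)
Your overall strategy matches the paper's: take a sequence $t_k\searrow s$ with $t_k\in\mathcal L$, use \eqref{eq:valuesinDplus} to get $\big(\lambda(t_k),-H(\lambda(t_k),\varphi(t_k))\big)\in D^+u(\varphi(t_k),t_k)$, restrict to $D^+u_H$, and pass to the limit via the closedness property that semiconcavity provides. The paper in fact restricts to $u_H$ first and then proves closedness of $D^+u_H$ from Theorem~\ref{thm:semiconcave} by the same computation you outline in your self-contained version of step~(2); your alternative order (close first in $D^+u$, restrict afterwards) would also work.

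There is, however, a real gap in your step~(1) and in what you dismiss as a ``minor secondary point''. You only record that $\lambda(t_k)\to\lambda(s)$ in $L^2$, and then claim that boundedness of $V',V''$ together with $\varphi(t_k)\to\varphi_0$ in $H_0^1$ yields $H(\lambda(t_k),\varphi(t_k))\to H(\lambda(s),\varphi_0)$. But the Hamiltonian in \eqref{eq:Hamiltonian} contains the term $-\delta(\lambda_x,\varphi_x)$, which is not controlled by bounds on $V$ and does not pass to the limit under mere $L^2$-convergence of $\lambda$. Worse, with only $\lambda(s)\in L^2$ the quantity $H(\lambda(s),\varphi_0)$ is not even defined. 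So the convergence of the time-component of $p_k$ is exactly the crux, not a detail.

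The paper resolves this by proving that both $\varphi$ and $\lambda$ are continuous as $H_0^1$-valued functions at $t_0=s$. For $\varphi$ this uses that $\varphi_0\in V\subset D(A^\gamma)$ for $\gamma<3/4$, together with \eqref{eq:Aproperties1}--\eqref{eq:Aproperties2} applied to \eqref{eq:mildsols1}. For $\lambda$ the paper invokes Theorem~3.5.2 in \cite{Henry} to obtain that $\|A^{1/2}\lambda_t(t)\|$ is finite for $t<T$, hence $t\mapsto\lambda(t)$ is continuous into $H_0^1$ on $[s,T)$. Once $\lambda(t_k)\to\lambda(s)$ in $H_0^1$, the local Lipschitz continuity of $H$ on $H_0^1\times H_0^1$ gives the Hamiltonian convergence and the argument closes. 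You should add this regularity step for $\lambda$; without it the proof does not go through.
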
 
\begin{proof}
Using Chapter 3 in \cite{Henry} we have that 
$\varphi(t) \in H_0^1(0,1) \cap H^2(0,1)$ for $t_0 < t < T$, so by
\eqref{eq:valuesinDplus} 
\begin{equation*}
\big( \lambda(t), -H(\lambda(t), \varphi(t)) \big) \in D^+ u (\varphi(t),t),
\quad \text{a.e.}
\end{equation*} 
It follows from the definition that then also
\begin{equation*}
\big( \lambda(t), -H(\lambda(t), \varphi(t)) \big) \in D^+ u_H (\varphi(t),t),
\quad \text{a.e.}
\end{equation*}
We shall verify that the semiconcavity of $u_H$ implies that
\begin{equation}\label{eq:uppersemicontinuous}
z_n \rightarrow z_0,\ D^+u_H(z_n) \ni p_n \rightarrow p \implies 
p \in D^+ u_H(z_0).
\end{equation}
In order to prove this we use \eqref{eq:semiconmodulus} at the points
$z_n$. We thereby have that 
\begin{equation*}
\begin{split}
&u_H(z) - u_H(z_0) - (p,z-z_0)\\ 
= & u_H(z)-u_H(z_n) -(p_n,z-z_n) \\
& \quad+  u_H(z_n) - u_H(z_0) + (p_n-p,z-z_0) +
(p_n,z - z_n) \\
 \leq & K |z-z_0|^2 + \varepsilon,
\end{split}
\end{equation*}
where $\varepsilon$ can be made arbitrarily small by using the
convergence $z_n \rightarrow z_0$, $p_n \rightarrow p$, and that $u_H$
is continuous (it is even locally Lipschitz continuous, see
\cite{Cannarsa-Frankowska2}). Hence
\begin{equation*}
u_H(z) - u_H(z_0) - (p,z-z_0) \leq  K |z-z_0|^2,
\end{equation*}
which implies that $p \in D^+u_H(z_0)$, and
\eqref{eq:uppersemicontinuous} holds. 
(As can be seen in the above argument it suffices that $p_n
\rightharpoonup p$ weakly,
but we will not need this here.)

Since the Hamiltonian $H:H_0^1(0,1) \times H_0^1(0,1) \rightarrow \Re$
is locally Lipschitz continuous and \eqref{eq:valuesinDplus} holds,
what remains is to prove that $\varphi$ and $\lambda$ are continuous as
functions of time with values in $H_0^1(0,1)$ at
$t_0$. By equation \eqref{eq:mildsols1} we
have that
\begin{multline}\label{eq:Ahalfevol}
A^{1/2} \big(\varphi(t)-\varphi_0 \big) = \big( S(t-t_0) -I \big)
A^{1/2}\varphi_0 \\
+ \int_{t_0}^t A^{1/2} S(t-s) \big(
-\delta^{-1}V'(\varphi(s)) -\lambda(s)\big)ds,
\end{multline} 
where passing $A^{1/2}$ under the integral sign is justified by the
fact that $A^{1/2}$ is a closed operator. By \eqref{eq:Agammadomain}
it is a straightforward calculation to confirm that $V \subset D(A^\gamma)$ for
$\gamma < 3/4$. Since $\varphi_0 \in V$, \eqref{eq:Aproperties2}  may be
used to get a bound for the first term in the right hand side of
\eqref{eq:Ahalfevol}:
\begin{equation*}
||\big(S(t-t_0)-I \big) A^{1/2}\varphi_0 || \leq 10 C t^{1/10} ||A^{3/5} \varphi_0||.
\end{equation*} 
The norm of the integral in \eqref{eq:Ahalfevol} converges to zero as
$t \rightarrow t_0$ by \eqref{eq:Aproperties1} and the fact that $V'$
and $\lambda$ (since it equals $-\alpha$) are bounded. Hence 
\begin{equation*}
||A^{1/2} \big(\varphi(t) - \varphi_0 \big)|| \rightarrow 0, \quad \text{as
  $t \searrow t_0$.}
\end{equation*} 
The function $\lambda$ is also   continuous as a function with values in $H_0^1(0,1)$
when $t \searrow t_0$, as, by Theorem 3.5.2 in \cite{Henry},
$||A^\gamma \lambda_t||$ exists when $t<T$ and $\gamma <1$, e.g.\ $\gamma=1/2$.
\end{proof}
In order to be able to use Theorem \ref{thm:valueerror} and
\ref{thm:HJsatisfied} a few results about the regularity for the state
and the dual is established. The original setting, without
discretization in space, is considered first.
\begin{thm}\label{thm:infinitedimregularity}
For every $C>0$ and all starting positions $(\varphi_0,t_0)$ satisfying
$||\varphi_0||_{H_0^1(0,1)} \leq C$, $0 \leq t_0 \leq T$, there exists a $D>0$ and an optimal
state $\varphi$ to problem \eqref{eq:valuefunction}, with corresponding
dual $\lambda$, solving \eqref{eq:l}, such that for all $t_0 \leq t
\leq T$,
\begin{subequations}\label{eq:infinitedimregularity}
\begin{align}
||\varphi(t)||_{H_0^1(0,1)} & \leq D, \label{eq:infinitedimregularity1}
  \\
||\varphi(t)||_{H^2(0,1)} & \leq D(t-t_0)^{-1/2}, \label{eq:infinitedimregularity2}
  \\
||\lambda(t)||_{H_0^1(0,1)} & \leq D, \label{eq:infinitedimregularity3}
  \\
||\lambda(t)||_{H^2(0,1)} & \leq
  D(T-t_0)^{-1/2}. \label{eq:infinitedimregularity4} \\
\intertext{If $\varphi_0$ satisfies the higher regularity $||A^{5/7}
  \varphi_0|| \leq C$ it further holds that}
||\varphi(t)||_{H^2(0,1)} &\leq D(t-t_0)^{-2/7},  \label{eq:infinitedimregularity5} \\
||\lambda(t)||_{H^2(0,1)} &\leq D(T-t_0)^{-2/7}.  \label{eq:infinitedimregularity6} 
\end{align}
\end{subequations}
\end{thm}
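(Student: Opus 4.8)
The plan is a finite bootstrap on the coupled mild system \eqref{eq:mildsols}, using nothing more than the smoothing estimates \eqref{eq:Aproperties1}--\eqref{eq:Aproperties2} for the analytic semigroup $S$ generated by $-A=\delta\,d^2/dx^2$. First, by Corollary \ref{cor:existence} and the discussion preceding \eqref{eq:mildsols}, fix an optimal pair $(\varphi,\lambda)$ with $\lambda=-\alpha$, where $\lambda$ is bounded in $L^\infty(t_0,T;L^2)$ by $E\|\varphi_0\|+F\le EC+F=:L$ (Theorem \ref{thm:boundedcontrol}), and $(\varphi,\lambda)$ solves \eqref{eq:mildsols1}, \eqref{eq:mildsols3}. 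Throughout I use that $V',V'',V'''$ are bounded in supremum norm, that $\varphi_-$ is smooth (elliptic bootstrap from $V$ smooth), hence lies in $D(A)$, and the identifications $\|A^{1/2}w\|=\sqrt\delta\|w_x\|$, $\|Aw\|=\delta\|w_{xx}\|$ together with the norm equivalences $\|\cdot\|_{H_0^1}\sim\|A^{1/2}\cdot\|$ on $D(A^{1/2})=H_0^1(0,1)$ and $\|\cdot\|_{H^2}\sim\|A\cdot\|$ on $D(A)=H^2\cap H_0^1$. All interchanges of $A^{1/2}$ or $A$ with Duhamel integrals are legitimate by closedness, as in the proof of Theorem \ref{thm:HJsatisfied}.

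\emph{Step 1: the $H_0^1$ bounds \eqref{eq:infinitedimregularity1} and \eqref{eq:infinitedimregularity3}.} Apply $A^{1/2}$ to \eqref{eq:mildsols1}: the free term $S(t-t_0)A^{1/2}\varphi_0$ is bounded by $\|A^{1/2}\varphi_0\|=\sqrt\delta\|\varphi_{0,x}\|\le\sqrt\delta\,C$ since $S$ is a contraction, and the Duhamel term is bounded, via \eqref{eq:Aproperties1} with $\gamma=1/2$ and $\|-\delta^{-1}V'(\varphi(s))-\lambda(s)\|\le\delta^{-1}|V'|+L$, by $C\int_{t_0}^t(t-s)^{-1/2}\,ds\le 2C\sqrt T$. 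This gives \eqref{eq:infinitedimregularity1}, and in particular $\varphi(T)\in H_0^1$, so $\lambda(T)=2K(\varphi(T)-\varphi_-)\in H_0^1$ with controlled norm. Applying $A^{1/2}$ to \eqref{eq:mildsols3} in the same way, using $\|\lambda(s)V''(\varphi(s))\|\le|V''|L$, yields \eqref{eq:infinitedimregularity3}.

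\emph{Step 2: the $H^2$ bounds \eqref{eq:infinitedimregularity2} and \eqref{eq:infinitedimregularity4}.} Now $\varphi(s),\lambda(s)\in H_0^1$ uniformly, so the driving term $f(s):=-\delta^{-1}V'(\varphi(s))-\lambda(s)$ lies in $D(A^{1/2})=H_0^1$ uniformly: $V'(\varphi(s))$ vanishes at $x=0,1$ because $V'(0)=0$ (the modified potential coincides with $V$ near $0$), and $\|A^{1/2}V'(\varphi(s))\|=\sqrt\delta\|V''(\varphi(s))\varphi_x(s)\|\le|V''|\sqrt\delta\|\varphi_x(s)\|$ is bounded by \eqref{eq:infinitedimregularity1}. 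Apply $A$ to \eqref{eq:mildsols1}, splitting $AS(\tau)=A^{1/2}S(\tau)A^{1/2}$ via \eqref{eq:Aproperties3}--\eqref{eq:Aproperties4}: the free term is bounded by $C(t-t_0)^{-1/2}\|A^{1/2}\varphi_0\|$, the Duhamel term by $C\int_{t_0}^t(t-s)^{-1/2}\|A^{1/2}f(s)\|\,ds\le 2C\sqrt T\,\sup_s\|A^{1/2}f(s)\|$, giving \eqref{eq:infinitedimregularity2}. Then $\varphi(T)\in D(A)$ with $\|A\varphi(T)\|\le D(T-t_0)^{-1/2}$, hence $\lambda(T)\in D(A)$ with $\|A\lambda(T)\|\le C(T-t_0)^{-1/2}$; moreover $\lambda(s)V''(\varphi(s))\in H_0^1$ uniformly (it vanishes at the boundary since $\lambda(s)$ does, and $\|A^{1/2}(\lambda V''(\varphi))\|\lesssim|V''|\|\lambda_x\|+|V'''|\|\lambda\|_{L^\infty}\|\varphi_x\|$ is bounded using $H_0^1\hookrightarrow L^\infty$ in one dimension together with \eqref{eq:infinitedimregularity1}, \eqref{eq:infinitedimregularity3}). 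Applying $A$ to \eqref{eq:mildsols3}, whose free term becomes $S(T-t)A\lambda(T)$ (bounded by $\|A\lambda(T)\|$) and whose Duhamel term is bounded by a constant as above, gives $\|A\lambda(t)\|\le C(T-t_0)^{-1/2}+2C\sqrt T\le D(T-t_0)^{-1/2}$, absorbing the additive constant via $(T-t_0)^{-1/2}\ge T^{-1/2}$; this is \eqref{eq:infinitedimregularity4}.

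\emph{Step 3: the improved bounds \eqref{eq:infinitedimregularity5}, \eqref{eq:infinitedimregularity6}, and the main difficulty.} If $\|A^{5/7}\varphi_0\|\le C$ (which holds for $\varphi_0\in V$, since $5/7<3/4$), rerun the $A$-estimate for \eqref{eq:mildsols1} writing the free term as $A^{2/7}S(t-t_0)A^{5/7}\varphi_0$, bounded by $C(t-t_0)^{-2/7}\|A^{5/7}\varphi_0\|$; the Duhamel term is unchanged and still bounded by a constant, so $\|A\varphi(t)\|\le D(t-t_0)^{-2/7}$, i.e. \eqref{eq:infinitedimregularity5}. Feeding the resulting $\|A\varphi(T)\|\le D(T-t_0)^{-2/7}$ into $\lambda(T)$ and repeating the $\lambda$-estimate of Step 2 gives \eqref{eq:infinitedimregularity6}. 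No step here is deep — the semigroup estimates \eqref{eq:Aproperties1}--\eqref{eq:Aproperties2} do all the work once each integrand sits in the right fractional domain. The part requiring care, and where I expect the friction to be, is the bookkeeping: closing the bootstrap without circularity (the $H_0^1$ bound for $\varphi$ needs only the $L^2$ bound on $\lambda$ from Theorem \ref{thm:boundedcontrol}; the $H_0^1$ bound for $\lambda$ then needs that for $\varphi$; each $H^2$ bound uses the $H_0^1$ bounds, and the one for $\lambda$ additionally uses the $H^2$ bound for $\varphi(T)$), and tracking the Dirichlet condition so that $V'(\varphi)$ and $\lambda V''(\varphi)$ land in $D(A^{1/2})=H_0^1$ rather than merely in $H^1$ — precisely what makes the Duhamel integrals converge after one power of $A$.
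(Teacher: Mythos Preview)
Your proposal is correct and follows essentially the same route as the paper: a semigroup bootstrap on the mild system \eqref{eq:mildsols}, first obtaining uniform $H_0^1$ bounds via $A^{1/2}$ and then $H^2$ bounds via the splitting $AS(\tau)=A^{1/2}S(\tau)A^{1/2}$, with the improved rates coming from writing the free term as $A^{2/7}S(t-t_0)A^{5/7}\varphi_0$. You are slightly more explicit than the paper in checking that $V'(\varphi)$ and $\lambda V''(\varphi)$ land in $D(A^{1/2})=H_0^1$ (via $V'(0)=0$ and the one-dimensional embedding $H_0^1\hookrightarrow L^\infty$), but the argument is otherwise the same.
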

\begin{proof}
In the proof, we will write $D$ for any constant which may depend on
$C$, but not on $t_0$.

\emph{Step 1.} By theorem \ref{thm:boundedcontrol} it is possible to
choose an optimal control $\alpha$ such that $||\alpha(t)|| \leq L$,
for some constant $L$ which only depends on $C$. Since
$\lambda=-\alpha$ the same holds for $\lambda$.

\emph{Step 2.} Since $A^\gamma$ and $S(t)$ commute (see
\eqref{eq:Aproperties4}) we can operate with $A^{1/2}$ on equation
\eqref{eq:mildsols1} to obtain
\begin{equation*}
A^{1/2}\varphi(t)=S(t-t_0)A^{1/2}\varphi_0 + \int_{t_0}^t A^{1/2} S(t-s)
\big(-\delta^{-1} V'(\varphi(s))-\lambda(s) \big)ds.
\end{equation*} 
As $S(t)$ is a contraction semigroup and by the boundedness of $V'$
and $||\lambda(s)||$ together with \eqref{eq:Aproperties1} it therefore holds that 
\begin{equation*}
||A^{1/2}\varphi(t)|| \leq ||A^{1/2}\varphi_0|| + D \int_{t_0}^t
  (t-s)^{-1/2} ds,
\end{equation*}
and hence
$||\varphi_x(t)|| \leq D$. By a Poincar\'e
inequality (e.g.\ Proposition 5.3.5 in \cite{Brenner-Scott})
\eqref{eq:infinitedimregularity1} holds.

\emph{Step 3.} Since $\lambda(T)=2K(\varphi(T)-\varphi^-)$, boundedness of
$\lambda(T)$ in $H_0^1(0,1)$ follows. The same analysis for
\eqref{eq:mildsols3}  as was performed in \emph{Step 2} may therefore
be used. Using that $||\lambda(s)||$ is bounded for all $s$ gives
\eqref{eq:infinitedimregularity3}.

\emph{Step 4.}  Operate with $A$ on \eqref{eq:mildsols1} to obtain
\begin{equation}\label{eq:Ahalfhalf}
A\varphi(t) = A^{1/2}S(t-t_0) A^{1/2}\varphi_0 + \int_{t_0}^t A^{1/2} S(t-s)
A^{1/2} \big(-\delta^{-1}V'(\varphi(s)) -\lambda(s)\big) ds.
\end{equation}
Since $\varphi(s)$ and $\lambda(s)$ are bounded in $H_0^1(0,1)$ for all
$s$ it holds that
\begin{equation*}
||A^{1/2} \big(-\delta^{-1}V'(\varphi(s))-\lambda(s)\big)|| < D.
\end{equation*}
Therefore 
\begin{equation*}
||A \varphi(t)|| \leq D (t-t_0)^{-1/2} + D \int_{t_0}^t (t-s)^{-1/2} ds \leq
  D (t-t_0)^{-1/2},
\end{equation*}
as $T$ is finite. So \eqref{eq:infinitedimregularity2} holds.

\emph{Step 5.} In this last step we use the operator $A$ in equation
\eqref{eq:mildsols3} in the following way:
\begin{equation}\label{eq:H2lambda}
A\lambda(t) = S(T-t) A\lambda(T) - \delta^{-1} \int_t^T A^{1/2} S(s-t)
A^{1/2} \Big(\lambda(s) V''\big(\varphi(s)\big)\Big)ds.
\end{equation}
The bound for $\varphi(T)$ in $H^2(0,1)$ may be transferred to
$\lambda(T)$ by \eqref{eq:mildsols4}, which yields $||\lambda(T)||
\leq D(T-t_0)^{-1/2}$. As both $\varphi(t)$ and $\lambda(t)$ are bounded in $H_0^1(0,1)$
for all $t_0 < t < T$ the integral in \eqref{eq:H2lambda} is bounded
by 
\begin{equation*}
D \int_t^T (s-t)^{-1/2} ds.
\end{equation*}  
Since $T$ is finite
\eqref{eq:infinitedimregularity4} holds.

\emph{Step 6.} Equations \eqref{eq:infinitedimregularity5} and \eqref{eq:infinitedimregularity6} can be
proved similarly as in \emph{Step 4} and \emph{Step 5} by changing the
first term in the right hand side of \eqref{eq:Ahalfhalf} to 
$A^{2/7}S(t-t_0)A^{5/7} \varphi_0$. By this we see that $||A\varphi(t)||
\leq D(t-t_0)^{-2/7}$, which implies \eqref{eq:infinitedimregularity6},
just as in \emph{Step 5}.
\end{proof}
A regularity result for the spatially discretized case is now to be
established.
According to theory in e.g.\ \cite{Cannarsa-Sinestrari} the optimal
control problem \eqref{eq:approxu}, \eqref{eq:approxphievol} has a
minimizing control $\bar \alpha$.  The corresponding state is denoted
$\bar \varphi$. The value function
is differentiable along optimal paths, i.e.\ the derivative exists at $\bar u (\bar\varphi(s),s)$
for all $s \in (t_0,T]$. The spatial G\^ateaux derivative of $\bar u$ at
$(\bar\varphi(s),s)$ will be denoted $\bar\lambda(s)$. The optimal state,
$\bar\varphi$, and the G\^ateaux derivative $\bar\lambda$ satisfy the
following system:
\begin{subequations}\label{eq:approxHamiltonsyst}
\begin{align}
(\bar\varphi_t,v)&=-\delta(\bar\varphi_{x},v_x)-(\delta^{-1}V'(\bar\varphi)+\bar\lambda,v), \quad
  \text{for all } v \in V,\label{eq:approxHamiltonsyst1}\\
\bar\varphi(t_0)&=\bar\varphi_0, \label{eq:approxHamiltonsyst2}\\
-(\bar\lambda_t,v)&=
  -\delta(\bar\lambda_x,v_x)-\delta^{-1}(\bar\lambda V''(\bar\varphi),v),
  \quad \text{for all } v \in V, \label{eq:approxHamiltonsyst3}\\
\bar\lambda(T) &= 2K(\bar\varphi(T)-\text{Proj}_V \varphi^-). \label{eq:approxHamiltonsyst4}
\end{align}
\end{subequations}
Furthermore, the theory in \cite{Cannarsa-Sinestrari} reveals that the
optimal control, $\bar\alpha$, satisfies $\bar\alpha=-\bar\lambda$.
Some new notation is now introduced. We let the interval $[0,1]$ be
divided into $M$ subintervals with $\Delta x = 1/M$, and let
$\{v^i\}_{i=1}^{M-1}$ be the standard nodal basis in $V$; see Figure
\ref{fig:basis}.
\begin{figure}
\centering
\includegraphics[width=0.5\textwidth]{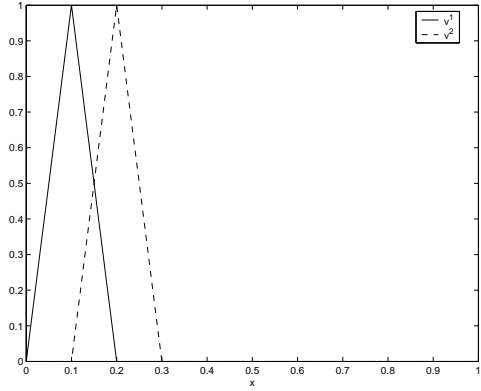}
\caption{Two of the nine basis functions when $\Delta x=1/10$.}
\label{fig:basis}
\end{figure} 
The \emph{interpolant}, $I$, takes a function in $C([0,1])$ to the
element in $V$ which
coincides with the original function at $i \Delta x$, $i=1, \ldots ,
M-1$, so that, for instance, $I \lambda(i \Delta x) = \lambda( i
\Delta x)$.
The second difference quotient matrix $D^2$ and the mass matrix $B$ are
introduced as
\begin{align}
D^2 &= \frac{1}{\Delta x^2} \begin{pmatrix}
-2 & 1 & 0 & \cdots & 0 \\
1 & -2 & 1 &  & \vdots & \\
0 & \ddots &\ddots &\ddots & 0 \\
\vdots & & 1 & -2 & 1 \\
0 & \cdots & 0 & 1 & -2
\end{pmatrix}, \label{eq:diffquotient}\\
\notag \\
B &= \begin{pmatrix}
2/3 & 1/6 & 0 & \cdots & 0 \\
1/6 & 2/3 & 1/6 &  & \vdots & \\
0 & \ddots &\ddots &\ddots & 0 \\
\vdots & & 1/6 & 2/3 & 1/6 \\
0 & \cdots & 0 & 1/6 & 2/3
\end{pmatrix}. \label{eq:massmatrix}
\end{align}
If $\bar\varphi(t)$ and $\bar\lambda(t)$ are written in the basis
$\{v^i\}_{i=1}^{M-1}$ as
\begin{equation}\label{eq:coorexpr}
\bar\varphi(t) =: \sum_{i=1}^{M-1} \zeta^i(t) v^i, \quad
\bar\lambda(t) =: \sum_{i=1}^{M-1} \theta^i(t) v^i,
\end{equation}
then equations \eqref{eq:approxHamiltonsyst1} and
\eqref{eq:approxHamiltonsyst3} may be rewritten as 
\begin{subequations}\label{eq:FEMcoords}
\begin{align}
B \zeta' &= \delta D^2 \zeta - \frac{\delta^{-1}}{\Delta x} p - B
\theta, \label{eq:FEMcoords1} \\
-B \theta' &= \delta D^2 \theta - \frac{\delta^{-1}}{\Delta x}r, \label{eq:FEMcoords2}
\end{align}
\end{subequations}
where 
\begin{equation}\label{eq:vectdefs}
\begin{split}
\zeta= \begin{pmatrix} \zeta^1 \\ \vdots \\ \zeta^{M-1} \end{pmatrix},&
\quad 
\theta= \begin{pmatrix} \theta^1 \\ \vdots \\ \theta^{M-1}
\end{pmatrix}, \\ 
p=\begin{pmatrix}
(V'(\bar\varphi),v^1) \\
\vdots \\
(V'(\bar\varphi),v^{M-1})
\end{pmatrix},
&\ \text{and} \   
r=\begin{pmatrix}
(\bar\lambda V''(\bar\varphi),v^1) \\
\vdots \\
(\bar\lambda V''(\bar\varphi),v^{M-1})
\end{pmatrix}.
\end{split}
\end{equation}
We now state a Lemma which will be used in the proofs of Theorem
\ref{thm:diffcontrol} and \ref{thm:spatialconvergence}.
\begin{lemma}\label{lem:H2proj}
For any element $\psi \in H^2(0,1) \cap H^1_0(0,1)$ the projection
$P\psi$, written in the nodal basis $\{v^i\}$ as
\begin{equation*}
P \psi =: \sum_{i=1}^{M-1} \xi^i v^i,
\end{equation*}
satisfies
\begin{equation}\label{eq:H2proj}
\big(\Delta x \sum_{i=1}^{M-1} (D^2 \xi)_i^2 \big)^{1/2} \leq C ||\psi_{xx}||,
\end{equation}
with a constant $C$ independent of $\Delta x$.
\end{lemma}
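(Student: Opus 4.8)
I would prove \eqref{eq:H2proj} by comparing the projection $P\psi$ with the nodal interpolant $I\psi$ of $\psi$ and trading a discrete inverse estimate against an approximation estimate. Write $x_j=j\,\Delta x$ and, for $c\in\Re^{M-1}$, put $\|c\|_{\Delta x}:=\big(\Delta x\sum_i c_i^2\big)^{1/2}$, so that the left-hand side of \eqref{eq:H2proj} is $\|D^2\xi\|_{\Delta x}$. Let $\hat\psi:=\big(\psi(x_j)\big)_{j=1}^{M-1}$ be the coefficient vector of $I\psi$ in the nodal basis; since $\psi\in H^1_0(0,1)$, the ghost values $\psi(x_0)=\psi(x_M)=0$ implicit in the matrix $D^2$ are the correct ones for $\hat\psi$. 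By the triangle inequality,
\[
\|D^2\xi\|_{\Delta x}\le \|D^2\hat\psi\|_{\Delta x}+\|D^2(\xi-\hat\psi)\|_{\Delta x},
\]
and the plan is to bound the two terms separately.

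For the first term, integration by parts against the hat function $v^j\in H^1_0$ (whose derivative is piecewise constant) yields the exact identity $\Delta x\,(D^2\hat\psi)_j=-(\psi_x,v^j_x)=(\psi_{xx},v^j)$ for every $j$. Hence, by Cauchy--Schwarz, $\|v^j\|^2=\tfrac23\Delta x$, and the fact that each subinterval lies in the support of at most two of the $v^j$,
\[
\|D^2\hat\psi\|_{\Delta x}^2=\frac1{\Delta x}\sum_j(\psi_{xx},v^j)^2\le\frac1{\Delta x}\cdot\frac{2\Delta x}{3}\cdot 2\,\|\psi_{xx}\|^2=\frac43\,\|\psi_{xx}\|^2 .
\]

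For the second term I would use two elementary spectral facts about the matrices \eqref{eq:diffquotient}--\eqref{eq:massmatrix}. First, by Gershgorin's theorem $D^2$ has spectrum in $[-4\Delta x^{-2},0]$, so $\|D^2 c\|_{\Delta x}\le 4\Delta x^{-2}\|c\|_{\Delta x}$. Second, $B$ is symmetric positive definite (it is the Gram matrix of the $v^i$) with $\lambda_{\min}(B)\ge\tfrac13$ (Gershgorin again), so for $w=\sum_i c_i v^i\in V$ one has $\|w\|^2=\Delta x\,c^{\mathsf T}Bc\ge\tfrac{\Delta x}{3}\sum_i c_i^2$, i.e.\ $\|c\|_{\Delta x}\le\sqrt3\,\|w\|$. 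Applying this with $c=\xi-\hat\psi$, so that $w=P\psi-I\psi$, and then using that $P\psi$ is the $L^2$-best approximation to $\psi$ from $V$ (hence $\|\psi-P\psi\|\le\|\psi-I\psi\|$) together with the standard interpolation bound $\|\psi-I\psi\|\le C\Delta x^2\|\psi_{xx}\|$ (e.g.\ \cite{Brenner-Scott}), I obtain
\[
\|\xi-\hat\psi\|_{\Delta x}\le\sqrt3\,\|P\psi-I\psi\|\le\sqrt3\big(\|\psi-P\psi\|+\|\psi-I\psi\|\big)\le C\Delta x^2\|\psi_{xx}\| .
\]
Consequently $\|D^2(\xi-\hat\psi)\|_{\Delta x}\le 4\Delta x^{-2}\cdot C\Delta x^2\|\psi_{xx}\|=C\|\psi_{xx}\|$, and adding this to the bound on the first term gives \eqref{eq:H2proj}.

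The only point that is not completely routine is the handling of $\xi$ itself: the coefficients of an $L^2$ projection carry no pointwise information, so one cannot estimate their second differences directly. The device above is to pass to the interpolant coefficients $\hat\psi$ — whose second differences collapse, under integration by parts, to the harmless pairing $(\psi_{xx},v^j)$ — and to absorb the $O(\Delta x^{-2})$ cost of the discrete second-difference operator acting on the correction $\xi-\hat\psi$ against the $O(\Delta x^2)$ smallness of $\|P\psi-I\psi\|$, the equivalence between the finite element $L^2$ norm and the scaled Euclidean norm of coefficients being uniform in $\Delta x$ since $B$ has condition number at most $3$. (If one prefers to avoid the inverse estimate, the identity $B=I+\tfrac{\Delta x^2}{6}D^2$ makes $B$ and $D^2$ commute, so $D^2(\xi-\hat\psi)=\tfrac1{\Delta x}B^{-1}D^2 e$ with $e_j=(\psi-I\psi,v^j)$, and one estimates $\|D^2 e\|_{\Delta x}\le C\Delta x\|\psi_{xx}\|$ elementwise; this is a little more computation but uses only Cauchy--Schwarz and the interpolation estimate.)
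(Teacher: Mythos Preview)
Your proof is correct. It takes a genuinely different route from the paper's argument, so a brief comparison is in order.

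The paper works directly with $\xi=\frac{1}{\Delta x}B^{-1}q$, where $q_j=(\psi,v^j)$. Since $B=I+\tfrac{\Delta x^2}{6}D^2$ commutes with $D^2$, it reduces to estimating $D^2q$. Each entry $(D^2q)_j$ equals $(\psi,w)$ with $w=\Delta x^{-2}(v^{j+1}-2v^j+v^{j-1})$; this $w$ has a bounded second primitive supported on four cells, so two integrations by parts give $|(D^2q)_j|\le C\sqrt{\Delta x}\,\|\psi_{xx}\|_{L^2(\text{supp }w)}$, and the finite overlap of the supports finishes the bound. No inverse inequality is needed.

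Your route is the standard finite-element decomposition: interpolant plus correction. The interpolant part is handled by a single integration by parts (the identity $\Delta x(D^2\hat\psi)_j=(\psi_{xx},v^j)$ is neat and exact), while the correction $\xi-\hat\psi$ is estimated by trading the $O(\Delta x^{-2})$ inverse estimate for $D^2$ against the $O(\Delta x^2)$ interpolation error for $\psi-I\psi$, with the uniform norm equivalence coming from $\lambda_{\min}(B)\ge\tfrac13$. This is more modular---it recycles off-the-shelf approximation and inverse estimates---and arguably easier to adapt to other element spaces. The paper's computation is more self-contained and avoids the crude cancellation $\Delta x^{-2}\cdot\Delta x^{2}$; your parenthetical alternative (using $B=I+\tfrac{\Delta x^2}{6}D^2$ on the correction term) essentially recovers the paper's direct approach applied to $e_j=(\psi-I\psi,v^j)$ instead of $q_j$.
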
 
\begin{proof}
The vector $\xi$ is defined as
\begin{equation*}
\xi= \begin{pmatrix} \xi^1 \\ \vdots \\ \xi^{M-1} \end{pmatrix},
\end{equation*}
and is given by $\xi=\frac{1}{\Delta x} B^{-1} q$, where 
\begin{equation*}
q=\begin{pmatrix}
(\psi,v^1) \\
\vdots \\
(\psi,v^{M-1})
\end{pmatrix},
\end{equation*}
which follows from the fact that $(\psi,v^i)=(P\psi,v^i)$ for all $v^i$.
The matrices $D^2$ and $B^{-1}$ commute, since the eigenvectors of
$D^2$ and $B$ (and $B^{-1}$) are equal, so $D^2 \xi = \frac{1}{\Delta
x} B^{-1} D^2 q.$ Every element of the vector $D^2
q$, except the first and the last, is a $L^2(0,1)$ scalar product
between $\psi$ and a translate of the function $v$ in Figure \ref{fig:v}.
\begin{figure}
\centering
\includegraphics[width=0.5\textwidth]{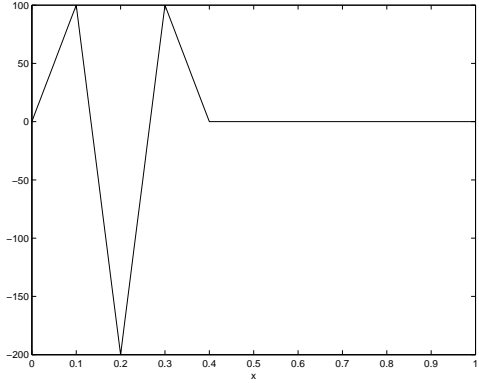}
\caption{The function $v$ admits values in the interval $[-2/\Delta
    x^2, 1/\Delta x^2]$ (here $\Delta x = 1/10$).}
\label{fig:v}
\end{figure}
It is easy to check that there exists a primitive function $\bar v$ to $v$
such that $\bar v(0)=\bar v(4\Delta x)=0$, and that there exists a primitive
function $\Bar{\Bar{v}}$ to $\bar v$ such that $\Bar{\Bar{v}}(0)=\Bar{\Bar{v}}(4\Delta x)=0$. Furthermore $\max
\Bar{\Bar{v}} \leq E$, for a constant $E$ which does not depend on $\Delta x$. Hence,
\begin{equation*}
|(\psi,v)|=|(\psi_{xx},\Bar{\Bar{v}})| \leq E \int_0^{4\Delta x} |\psi_{xx}|dx
\leq 2E \sqrt{\Delta x} \big( \int_0^{4\Delta x} \psi_{xx}^2 dx \big)^{1/2}.
\end{equation*}
The same sort of bound may be obtained
also for the first and the last elements of $D^2 q$
by using a $2$-periodic, odd extension of $\psi$ outside
$[0,1]$. We therefore have that
\begin{align*}
||D^2 q||_2^2 & \leq 4E^2 \Delta x \big(\underbrace{\int_{0}^{\Delta x} \psi_{xx}^2 dx
  + \int_0^{3\Delta x} \psi_{xx}^2 dx}_{\text{From } (D^2 q)_1} + \underbrace{\int_0^{4\Delta x} \psi_{xx}^2 dx}_{\text{From } (D^2 q)_2}
  + \ldots \\
&+ \underbrace{\int_{(M-3)\Delta x}^1 \psi_{xx}^2 dx + \int_{(M-1)\Delta
  x}^1\psi_{xx}^2 dx}_{\text{From } (D^2 q)_{M-1}} \big) \\
& \leq 16 E^2 \Delta x ||\psi_{xx}||^2,
\end{align*}
where $||\cdot||_2$ denotes the standard Euclidean vector norm. The
eigenvalues of $B^{-1}$ lie in the interval $[1,3]$, and hence it
holds that 
\begin{equation*}
||D^2 \xi||_2 \leq \frac{12E}{\sqrt{\Delta x}} ||\psi_{xx}||,
\end{equation*}
which is equivalent to \eqref{eq:H2proj}. 
\end{proof}
\begin{thm}\label{thm:diffcontrol}
There are constants $E$ and $F$, depending on the parameters of the
optimal control problem, as in Theorem \ref{thm:boundedcontrol}, but
not on $\varphi_0$, $t_0 \in [0,T]$ and the size of the
spatial discretization, such that for all $t_0 \leq t \leq T$,
\begin{align*}
||\bar\varphi_x(t)|| + ||\bar\lambda_x(t)|| & \leq E||(\varphi_0)_x|| + F, \\
||\bar\varphi_t(t)|| &\leq E\big( \Delta x
\sum_{i=1}^{M-1}(D^2 \zeta(t_0))_i^2 \big)^{1/2} +F, \\
\intertext{and}
||\bar\lambda_t(t)|| & \leq \Big(E\big( \Delta x
\sum_{i=1}^{M-1}(D^2 \zeta(t_0))_i^2 \big)^{1/2} +F \Big)^2, \\
\intertext{where}
\varphi_0 &= \sum_{i=1}^{M-1} \zeta^i(t_0) v^i. 
\end{align*}  
\end{thm}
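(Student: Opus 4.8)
The plan is to run, for the finite element system \eqref{eq:approxHamiltonsyst}, the same kind of variation-of-constants / fractional-power argument used for the continuous problem in Theorem~\ref{thm:infinitedimregularity}, the only extra issue being uniformity in $\Delta x$. First I would introduce the discrete Laplacian $A_{\Delta x}\colon V\to V$ defined by $(A_{\Delta x}\psi,v)=\delta(\psi_x,v_x)$ for all $v\in V$; in the nodal basis this is multiplication of the coordinate vector by $-\delta B^{-1}D^2$, and since the eigenvalues of $B^{-1}$ lie in $[1,3]$ (proof of Lemma~\ref{lem:H2proj}), $||A_{\Delta x}\psi||$ is comparable, with $\Delta x$-independent constants, to $\delta(\Delta x\sum_i(D^2\xi)_i^2)^{1/2}$ when $\psi=\sum\xi^iv^i$, while $||A_{\Delta x}^{1/2}\psi||=\sqrt\delta\,||\psi_x||$. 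The operator $A_{\Delta x}$ is self-adjoint and positive definite on $(V,(\cdot,\cdot))$, generates the contraction semigroup $S_{\Delta x}(t)=e^{-tA_{\Delta x}}$, and the spectral calculus gives $||A_{\Delta x}^\gamma S_{\Delta x}(t)||\le(\gamma/(et))^\gamma$ for $t>0$, the exact analogue of \eqref{eq:Aproperties1} with constant independent of $\Delta x$. I will also use two elementary $\Delta x$-uniform facts on the uniform mesh: the discrete Poincar\'e inequality $||\psi_x||\le C(\Delta x\sum_i(D^2\xi)_i^2)^{1/2}$ (the smallest eigenvalue of $-D^2$ is bounded below uniformly in $\Delta x$), and the $H^1$-stability of the $L^2$-projection $||(Pf)_x||\le C||f_x||$ for $f\in H_0^1(0,1)$ (from the nodal interpolation estimate and the inverse inequality); combined with Lemma~\ref{lem:H2proj} and discrete Poincar\'e the latter also gives $||(P\psi)_x||\le C||\psi_{xx}||$ for $\psi\in H^2\cap H_0^1$. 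In this notation \eqref{eq:approxHamiltonsyst} reads $\bar\varphi_t=-A_{\Delta x}\bar\varphi-P(\delta^{-1}V'(\bar\varphi))-\bar\lambda$ and $\bar\lambda_t=A_{\Delta x}\bar\lambda+\delta^{-1}P(\bar\lambda V''(\bar\varphi))$, with terminal condition $\bar\lambda(T)=2K(\bar\varphi(T)-P\varphi_-)$, and $\bar\varphi,\bar\lambda$ obey the corresponding forward and backward Duhamel formulas.

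\emph{Steps 1--2 (the first inequality).} The proof of Theorem~\ref{thm:boundedcontrol} carries over verbatim to the approximate problem \eqref{eq:approxu} — it only uses the evolution equation tested against the state and the bounds on $|V'|,|V''|$ — so an optimal control may be chosen with $||\bar\lambda(t)||=||\bar\alpha(t)||\le L\le E||(\varphi_0)_x||+F$ for all $t$, uniformly in $\Delta x$ (using $||\varphi_0||\le C||(\varphi_0)_x||$). Applying $A_{\Delta x}^{1/2}$ to the Duhamel formula for $\bar\varphi$, using that $S_{\Delta x}$ is a contraction, $||A_{\Delta x}^{1/2}S_{\Delta x}(t)||\le Ct^{-1/2}$, $||Pg||\le||g||$, and the $L^2$-bounds on $V'$ and $\bar\lambda$, gives $||A_{\Delta x}^{1/2}\bar\varphi(t)||\le||A_{\Delta x}^{1/2}\varphi_0||+D\int_{t_0}^t(t-s)^{-1/2}ds$, i.e.\ $||\bar\varphi_x(t)||\le E||(\varphi_0)_x||+F$. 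Since $\varphi_-\in H^2\cap H_0^1$, $||(P\varphi_-)_x||\le C||(\varphi_-)_{xx}||\le D$, so $||\bar\lambda_x(T)||\le 2K(||\bar\varphi_x(T)||+D)\le E||(\varphi_0)_x||+F$; applying $A_{\Delta x}^{1/2}$ to the backward Duhamel formula for $\bar\lambda$ and bounding $||P(\bar\lambda V''(\bar\varphi))||\le|V''|\,||\bar\lambda||\le|V''|L$ yields $||\bar\lambda_x(t)||\le E||(\varphi_0)_x||+F$ in the same way. Adding gives the first inequality.

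\emph{Steps 3--4 (the time-derivative bounds).} From $\bar\varphi_t=-A_{\Delta x}\bar\varphi-P(\delta^{-1}V'(\bar\varphi))-\bar\lambda$ we get $||\bar\varphi_t(t)||\le||A_{\Delta x}\bar\varphi(t)||+D$. I would apply $A_{\Delta x}$ to the Duhamel formula for $\bar\varphi$ and split $A_{\Delta x}S_{\Delta x}=A_{\Delta x}^{1/2}S_{\Delta x}A_{\Delta x}^{1/2}$; since $\bar\lambda(s)\in V$ gives $||A_{\Delta x}^{1/2}\bar\lambda(s)||=\sqrt\delta||\bar\lambda_x(s)||\le D$, and $||A_{\Delta x}^{1/2}P(V'(\bar\varphi(s)))||\le C||(V'(\bar\varphi(s)))_x||\le C|V''|\,||\bar\varphi_x(s)||\le D$ by Step~2 and $H^1$-stability, the Duhamel integral is bounded by $D\int_{t_0}^t(t-s)^{-1/2}ds\le D$, so $||A_{\Delta x}\bar\varphi(t)||\le||A_{\Delta x}\varphi_0||+D\le E(\Delta x\sum_i(D^2\zeta(t_0))_i^2)^{1/2}+F$, which is the second inequality. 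For $\bar\lambda_t=A_{\Delta x}\bar\lambda+\delta^{-1}P(\bar\lambda V''(\bar\varphi))$ one has $||\bar\lambda_t(t)||\le||A_{\Delta x}\bar\lambda(t)||+D$; applying $A_{\Delta x}$ to the backward Duhamel formula, the terminal term is $2K(A_{\Delta x}\bar\varphi(T)-A_{\Delta x}P\varphi_-)$, where $||A_{\Delta x}\bar\varphi(T)||$ is bounded by Step~3 and $||A_{\Delta x}P\varphi_-||\le C\delta||(\varphi_-)_{xx}||\le D$ precisely by Lemma~\ref{lem:H2proj}, and in the Duhamel integral $||A_{\Delta x}^{1/2}P(\bar\lambda V''(\bar\varphi))||\le C||(\bar\lambda V''(\bar\varphi))_x||\le C(|V''|\,||\bar\lambda_x||+|V'''|\,||\bar\lambda||_{L^\infty}||\bar\varphi_x||)$; by the one-dimensional embedding $||\bar\lambda||_{L^\infty}\le C||\bar\lambda_x||$ and Step~2 this is $\le C(E||(\varphi_0)_x||+F)^2$, which by the discrete Poincar\'e inequality is $\le(E'(\Delta x\sum_i(D^2\zeta(t_0))_i^2)^{1/2}+F')^2$. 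Integrating against $(s-t)^{-1/2}$ over $[t,T]$ and combining with the (at most linear) terminal term gives $||A_{\Delta x}\bar\lambda(t)||\le(E(\Delta x\sum_i(D^2\zeta(t_0))_i^2)^{1/2}+F)^2$, the third inequality.

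The genuinely non-routine point is the square in the $\bar\lambda_t$ estimate: it comes from the term $\bar\lambda V'''(\bar\varphi)\bar\varphi_x$ produced by differentiating $\bar\lambda V''(\bar\varphi)$, which is a product of two $H_0^1$-bounded quantities and hence scales like the square of the $H_0^1$-norm of the data — and one then has to absorb that norm into the discrete second-difference seminorm of $\varphi_0$ via the discrete Poincar\'e inequality, while the $\varphi_-$ contribution to the terminal condition is controlled exactly by Lemma~\ref{lem:H2proj}. Everything else is bookkeeping; the recurring care is that each ingredient — the semigroup bounds (automatic from the spectral calculus), the $H^1$-stability of $P$, and the Poincar\'e and mass-matrix eigenvalue estimates — holds with constants independent of $\Delta x$.
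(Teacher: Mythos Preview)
Your argument is correct, but it follows a genuinely different route from the paper. The paper proves this by \emph{energy methods}: it tests \eqref{eq:approxHamiltonsyst1} with $v=\bar\varphi_t$ to bound $||\bar\varphi_x(t)||$ and $||\bar\varphi_t||_{L^2(t_0,T;V)}$ (and likewise for $\bar\lambda$), then differentiates the system in time and tests with $\hat\varphi=\bar\varphi_t$ and $\hat\lambda=\bar\lambda_t$, closing with Gr\"onwall. The squared bound for $||\bar\lambda_t||$ arises there from the term $\int_0^1|\bar\lambda\,\hat\varphi\,\hat\lambda|\,dx\le|\bar\lambda|_{L^\infty}\,||\hat\varphi||\,||\hat\lambda||$, exactly the same product structure you identify in $||(\bar\lambda V''(\bar\varphi))_x||$.

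Your approach instead transplants the semigroup/Duhamel argument of Theorem~\ref{thm:infinitedimregularity} to the discrete operator $A_{\Delta x}$, which makes the parallel with the continuous case transparent and avoids the time-differentiation step. The price is that you need the $H^1$-stability of the $L^2$-projection to control $||A_{\Delta x}^{1/2}P(V'(\bar\varphi))||$ and $||A_{\Delta x}^{1/2}P(\bar\lambda V''(\bar\varphi))||$; the paper's energy argument avoids this ingredient entirely (it only invokes \cite{Crouzeix-Thomee} later, in Theorem~\ref{thm:comparisonphi+}) and handles the $P\varphi_-$ contribution by the weak pairing estimate of Step~5 rather than by bounding $||A_{\Delta x}P\varphi_-||$ directly via Lemma~\ref{lem:H2proj}. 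Both routes yield the same $\Delta x$-uniform constants; yours is more systematic, the paper's more elementary.
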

\begin{proof}
The proof uses the same kind of techniques as in the proof of Theorem
5 on page 360 in \cite{Evans}.
Here, however, the regularity of $\bar\varphi$ must also be conveyed to $\bar\lambda$.
In the proof, we will write $E$ and $F$ for any constants that may depend on
the parameters of the problem, but not on $\Delta x$,  $t_0$, and $\varphi_0$.

\emph{Step 1.} As in the infinite dimensional case, treated in Theorem
\ref{thm:boundedcontrol}, it holds that the infimum in
\eqref{eq:approxu}  can be changed to
$\inf_{||\bar\alpha||_{L^{\infty}(t_0,T;V)}\leq E||\varphi_0||+F}$. The proof goes just as the proof for Theorem
  \ref{thm:boundedcontrol}. By a Poincar\'e inequality (see e.g.\ Theorem 5.3.5 in
\cite{Brenner-Scott}) it holds that $||\varphi_0|| \leq C ||(\varphi_0)_x||$
and therefore the infimum can be written  
$\inf_{||\bar\alpha||_{L^{\infty}(t_0,T;V)}\leq E||(\varphi_0)_x||+F}$.
It is therefore possible to let $v=\bar\varphi$ in
\eqref{eq:approxphievol} and use this boundedness of
$\bar\alpha$ to see that 
$||\bar\varphi(t)|| \leq E ||(\varphi_0)_x|| +F$, for all $t_0 \leq t \leq T$.

\emph{Step 2.} As already noted $\bar\lambda=-\bar\alpha$, and so by
\emph{Step 1} the same bound on $||\bar\lambda(t)||$ also holds.

\emph{Step 3.} With
$v=\bar\varphi_t$ in \eqref{eq:approxHamiltonsyst1} we have
\begin{multline*}
||\bar\varphi_t ||^2 + \frac{\delta}{2} \frac{d}{dt} ||\bar\varphi_x||^2 = -
  \delta^{-1} (V'(\bar\varphi),\bar\varphi_t)-(\bar\lambda,\bar\varphi_t) \\
\leq
  \delta^{-1} |V'|\cdot||\bar\varphi_t||+ ||\bar\lambda||\cdot
  ||\bar\varphi_t|| 
\leq \delta^{-2}|V'|^2+\frac{||\bar\varphi_t||^2}{4} +
  \frac{||\bar\lambda||^2}{2} + \frac{||\bar\varphi_t||^2}{2}.
\end{multline*}
The boundedness of $(\bar\varphi_0)_x$ and $||\bar\lambda(t)||$ thus implies
that 
\begin{align*}
||\bar\varphi_x(t)|| &\leq E||(\varphi_0)_x||+F, \\
||\bar\varphi_t||_{L^2(t_0,T;V)} &\leq E||(\varphi_0)_x||+F.
\end{align*}
By this the same sort of bound holds for  $\bar\lambda_x(T)$.
Letting $v=\bar\lambda_t$ in \eqref{eq:approxHamiltonsyst3} gives
that 
\begin{align*}
||\bar\lambda_x(t)|| &\leq E||(\varphi_0)_x||+F, \\
||\bar\lambda_t||_{L^2(t_0,T;V)} &\leq E||(\varphi_0)_x||+F,
\end{align*}
similarly as for $\bar\varphi$.

\emph{Step 4.}  All eigenvalues
of $B$ lie in the interval $[1/3,1]$, and so $||B^{-1}||_2 \leq 3$
(independently of $\Delta x$), where $||\cdot||_2$ denotes the
operator 2-norm. From this and \eqref{eq:FEMcoords} it follows
that 
\begin{equation*}
||\bar\varphi_t(t_0)|| \leq E\big( \Delta x
\sum_{i=1}^{M-1}(D^2 \zeta(t_0))_i^2 \big)^{1/2} +F.
\end{equation*}
We now introduce the notation $\hat\varphi \equiv
\bar\varphi_t$ and $\hat \lambda \equiv \bar\lambda_t$ and differentiate
equation \eqref{eq:approxHamiltonsyst1}
with respect to time:
\begin{equation*}
(\hat\varphi_t,v)=-\delta(\hat\varphi_{x},v_x)-\delta^{-1}(V''(\bar\varphi)\hat\varphi,v)-(\hat\lambda,v),
  \quad \text{for all } v \in V.
\end{equation*}
Let $v=\hat\varphi$ to obtain
\begin{multline*}
\half \frac{d}{dt} ||\hat\varphi||^2 = -\delta ||\hat
\varphi_x||^2-\delta^{-1} (V''(\bar\varphi)\hat\varphi,\hat\varphi)-
(\hat\lambda,\hat\varphi) \\
\leq \delta^{-1} |V''| \cdot ||\hat\varphi||^2 +
\frac{||\hat\lambda||^2}{2} + \frac{||\hat\varphi||^2}{2}.
\end{multline*}
The fact that $||\bar\varphi_t(t_0)|| \leq D$
together with the result on boundedness of
$||\hat\lambda||_{L^2(t_0,T;V)}$ and $||\hat\varphi||_{L^2(t_0,T;V)}$ in \emph{Step 3} implies 
that 
\begin{equation*}
||\hat\varphi(t)|| \leq E\big( \Delta x
\sum_{i=1}^{M-1}(D^2 \zeta(t_0))_i^2 \big)^{1/2} +F, \quad \text{for }
t_0 \leq t \leq T.
\end{equation*}

\emph{Step 5.} In this step it will  be shown that
$| \big((P\varphi_-)_x,w_x\big)| \leq D ||w||$ for all $w \in V$. Some new notation is introduced:
\begin{equation*}
P\varphi_- \equiv \sum_{i=1}^{M-1} \xi^i v^i, \quad w \equiv
\sum_{i=1}^{M-1} \eta^i v^i,
\end{equation*}  
with corresponding  vectors $\xi$ and $\eta$.
By means of a partial integration
\begin{equation}\label{eq:phiprod}
|\big( (P\varphi_-)_x,w_x\big)| \leq \Delta x |\eta^{T} D^2 \xi| \leq
\Delta x ||\eta||_2\cdot||D^2 \xi||_2,
\end{equation}
with $||\cdot||_2$ denoting the Euclidean vector norm.
Since the eigenvalues of $B$ lie in $[1/3,1]$ it holds that 
\begin{equation*}
||w||^2 = \Delta x \eta^T B \eta \geq \frac{\Delta x}{3} ||\eta||^2_2,
\end{equation*}
and so by Lemma \ref{lem:H2proj}, \eqref{eq:phiprod} gives:
\begin{equation*}
|\big( (P\varphi_-)_x,w_x \big)| \leq \Delta x ||D^2 \xi||_2 \cdot
 ||\eta||_2 \leq \Delta x \frac{D||(\varphi_-)_{xx}||}{\sqrt{\Delta x}}
 \cdot \frac{\sqrt{3}}{\sqrt{\Delta x}} ||w|| = D||w||.
\end{equation*}

\emph{Step 6.} It holds by equation \eqref{eq:approxHamiltonsyst4}
that $\bar\lambda_x(T)=2K(\bar\varphi_x(T)-(P\varphi_-)_x)$. Using this in
\eqref{eq:approxHamiltonsyst3} as well as
\eqref{eq:approxHamiltonsyst1} gives
\begin{align*}
-(\hat\lambda(T),v) &=
 -2K\delta(\bar\varphi_x(T),v_x)+2K\delta((P\varphi_-)_x,v_x)-\delta^{-1}(\bar\lambda(T)V''(\bar\varphi(T)),v) \\
&=
 2K(\hat\varphi(T),v)+2K(\delta^{-1}V'(\bar\varphi(T))+\bar\lambda(T),v) \\
&\quad +2K\delta((P\varphi_-)_x,v_x) -\delta^{-1}(\bar\lambda(T)V''(\bar\varphi(T)),v).
\end{align*}
With $v=\hat\lambda(T)$ it follows that 
\begin{equation*}
||\hat\lambda(T)|| \leq E\big( \Delta x
\sum_{i=1}^{M-1}(D^2 \zeta(t_0))_i^2 \big)^{1/2} +F,
\end{equation*}
by the results in \emph{Step 4} and
\emph{Step 5}. In order to bound 
$\hat\lambda$ at all
times, equation
\eqref{eq:approxHamiltonsyst3} is differentiated with respect to time:
\begin{equation*}
-(\hat\lambda_t,v)=-\delta(\hat\lambda_x,v_x)-\delta^{-1}(\hat\lambda
 V''(\bar\varphi),v)-\delta^{-1}(\bar\lambda V'''(\bar\varphi)\hat\varphi,v).
\end{equation*}
With $v=\hat\lambda$ in the previous equation the following bound is
obtained:
\begin{equation*}
-\half\frac{d}{dt}||\hat\lambda||^2 \leq
 \delta^{-1}|V''|\cdot||\hat\lambda||^2 + \delta^{-1}|V'''| \int_0^1
 |\bar\lambda \hat\varphi \hat\lambda|dx.
\end{equation*}
Since $\bar\lambda$ is bounded by $E\big( \Delta x
\sum_{i=1}^{M-1}(D^2 \zeta(t_0))_i^2 \big)^{1/2} +F$ in $H^1_0$ for all times 
it is similarly bounded in $L^\infty$. The last integral in the previous inequality may therefore be
estimated as follows:
\begin{equation*}
\int_0^1
 |\bar\lambda \hat\varphi \hat\lambda|dx \leq |\bar\lambda| \cdot
 ||\hat\varphi|| \cdot ||\hat\lambda|| \leq \Big(E\big( \Delta x
\sum_{i=1}^{M-1}(D^2 \zeta(t_0))_i^2 \big)^{1/2} +F\Big)^2 ||\hat\lambda||.
\end{equation*}
Using $\frac{d}{dt}||\hat\lambda||^2 =
2||\hat\lambda||\frac{d}{dt}||\hat\lambda||$, Gr\"onwall's Lemma, and
the boundedness of $||\hat\lambda(T)||$ we see that $||\hat\lambda(t)||$ is bounded by $\Big(E\big( \Delta x
\sum_{i=1}^{M-1}(D^2 \zeta(t_0))_i^2 \big)^{1/2} +F\Big)^2$ for all times.
\end{proof}
With the error representation in Theorem \ref{thm:valueerror},
Theorem \ref{thm:HJsatisfied} about $D^+ u_H$, and the regularity results of Theorem \ref{thm:infinitedimregularity}
and \ref{thm:diffcontrol}, it is possible to prove Theorem
\ref{thm:spatialconvergence} about spatial convergence.
We first give the idea of the proof. When the first integral in
\eqref{eq:errorrepr} is estimated the optimal path $\bar\varphi(s)$ is
used. In order to obtain an element in $D^+
u_H\big(\bar\varphi(s),s\big)$ the system of equations
\eqref{eq:mildsols} is considered with $\big(\bar\varphi(s),s\big)$
playing the role of $(\varphi_0,t_0)$. By Theorem~
\ref{thm:HJsatisfied} the computed $\lambda(s)$ is the spatial part of
an element in $D^+ u_H\big(\bar\varphi(s),s\big)$. Therefore, equation
\eqref{eq:mildsols} must be used for every starting position
$\big(\bar\varphi(s),s\big)$, $0 \leq s \leq T$, as depicted by Figure
\ref{fig:manyphis}. 
\begin{figure}
\centering
\includegraphics[width=0.5\textwidth]{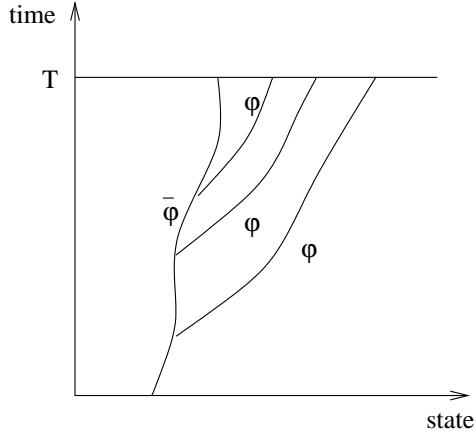}
\caption{Sketch showing the idea in Theorem \ref{thm:spatialconvergence}.}
\label{fig:manyphis}
\end{figure}
Similarly, when the second integral in \eqref{eq:errorrepr} is
estimated, the optimal path $\varphi(s)$ is used, and for each
$\big(P \varphi(s),s\big)$ as starting positions the solution
$\bar\lambda(s)$ to \eqref{eq:approxHamiltonsyst} is computed. 
\begin{thm}\label{thm:spatialconvergence}
For every constant $C>0$ and all starting points $\varphi_0$ with
\begin{equation}\label{eq:startingposs}
\varphi_0 = \sum_{i=1}^{M-1} \xi^i v^i \in V
\end{equation}
such that
\begin{equation}\label{eq:startingposcond}
\big( \Delta x \sum_{i=1}^{M-1} (D^2 \xi)_i^2 \big)^{1/2} \leq C,
\end{equation}
there is a constant $D>0$ such that
\begin{equation}\label{eq:deltaxconv}
|u(\varphi_0,0) - \bar u (\varphi_0,0)| \leq D \Delta x^2.
\end{equation}
\end{thm}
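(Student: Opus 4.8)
The plan is to invoke Theorem~\ref{thm:valueerror} with $t_0=0$ (its hypotheses hold: $\varphi_0\in V$, and optimal pairs exist by Corollary~\ref{cor:existence} and the theory of \cite{Cannarsa-Sinestrari}), and to bound the lower side of \eqref{eq:errorrepr} from below by $-D\Delta x^2$ and the upper side from above by $D\Delta x^2$. First I would draw two consequences of \eqref{eq:startingposcond}. Writing $\varphi_0''=\sum_i c_i\bar\delta(\cdot-x_i)$ with $c_i=\Delta x\,(D^2\xi)_i$ (the jumps of $(\varphi_0)_x$), two integrations by parts give $(\psi_n,\varphi_0)=-\sqrt2(n\pi)^{-2}\sum_i c_i\sin(n\pi x_i)$; since $\sum_{n=1}^{M-1}\sin(n\pi i\Delta x)\sin(n\pi j\Delta x)$ equals $M/2$ for $i=j$ and vanishes otherwise, summing $k_n^{10/7}|(\psi_n,\varphi_0)|^2$ --- in which the weight is $\sim n^{20/7-4}=n^{-8/7}$, bounded --- yields $||A^{5/7}\varphi_0||\le C$ (it is exactly $5/7<3/4$ that makes this converge), and likewise $||(\varphi_0)_x||\le C$. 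Hence the higher-regularity estimate \eqref{eq:infinitedimregularity5}, i.e.\ $||\varphi(s)_{xx}||\le D s^{-2/7}$, applies to the optimal state $\varphi$ of the original problem, and Theorem~\ref{thm:diffcontrol} applies with $C$ in place of the discrete $H^2$ seminorm of $\varphi_0$.

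For the upper side of \eqref{eq:errorrepr} I would take, for each $s$, the element $p^\#(s)\in D^+\bar u(P\varphi(s),s)$ furnished --- as in Theorem~\ref{thm:HJsatisfied} but for the discrete problem --- by the discrete adjoint $\bar\lambda(s)\in V$ of the control problem started at $(P\varphi(s),s)$, so $p^\#_\varphi(s)=\bar\lambda(s)$. Then $H(\bar\lambda(s),P\varphi(s))-H(\bar\lambda(s),\varphi(s))$ is a potential term $-\delta^{-1}(\bar\lambda(s),V'(P\varphi(s))-V'(\varphi(s)))$, bounded by $C\Delta x^2||\varphi(s)_{xx}||$ using $||P\varphi(s)-\varphi(s)||\le C\Delta x^2||\varphi(s)_{xx}||$ and boundedness of $\bar\lambda$ and $V''$, plus a gradient term $-\delta((\bar\lambda(s))_x,(P\varphi(s)-\varphi(s))_x)$, which is the crux. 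A naive bound gives only $O(\Delta x)$; instead, since $\bar\lambda(s)$ is piecewise linear, integration by parts over each element gives $((\bar\lambda(s))_x,(P\varphi(s)-\varphi(s))_x)=-\Delta x\sum_i(D^2\theta(s))_i\,(P\varphi(s)-\varphi(s))(x_i)$, which by Cauchy--Schwarz is at most the product of $(\Delta x\sum_i(P\varphi(s)-\varphi(s))(x_i)^2)^{1/2}$ --- which, since $(\varphi(s)-I\varphi(s))(x_i)=0$ and by the mass-matrix norm equivalence, equals up to constants $||P\varphi(s)-I\varphi(s)||\le||P\varphi(s)-\varphi(s)||+||\varphi(s)-I\varphi(s)||\le C\Delta x^2||\varphi(s)_{xx}||$ --- and of $(\Delta x\sum_i(D^2\theta(s))_i^2)^{1/2}$, a discrete $H^2$ seminorm of $\bar\lambda(s)$. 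The latter is controlled by \eqref{eq:FEMcoords2} by $||\bar\lambda_t(s)||$ up to a lower-order term, $||\bar\lambda_t(s)||\le(E\kappa_s+F)^2$ by Theorem~\ref{thm:diffcontrol} with $\kappa_s$ the discrete $H^2$ seminorm of the datum $P\varphi(s)$, and $\kappa_s\le C||\varphi(s)_{xx}||\le C s^{-2/7}$ by Lemma~\ref{lem:H2proj} and \eqref{eq:infinitedimregularity5}. Thus the gradient term is $\le D\Delta x^2 s^{-6/7}$, integrable since $6/7<1$; with the potential term ($\le D\Delta x^2\int_0^T s^{-2/7}ds$) and $g(P\varphi(T))-g(\varphi(T))$, which a Pythagorean rearrangement (using $\varphi_-\in H^2$ and $||\varphi(T)||_{H^2}<\infty$) shows is $O(\Delta x^4)$, this bounds the upper side by $D\Delta x^2$.

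For the lower side I would apply Theorem~\ref{thm:HJsatisfied} with starting position $(\bar\varphi(s),s)$ --- legitimate since $\bar\varphi(s)\in V$ --- taking $p^*(s)=(\lambda(s),-H(\lambda(s),\bar\varphi(s)))$, so the integrand becomes $H(P\lambda(s),\bar\varphi(s))-H(\lambda(s),\bar\varphi(s))$; here $\lambda$ is the dual of the continuous sub-problem. Its quadratic part is $\half||\lambda(s)-P\lambda(s)||^2\ge0$, its potential part is $O(\Delta x^2||\lambda(s)_{xx}||)$, and its gradient part $-\delta((P\lambda(s)-\lambda(s))_x,(\bar\varphi(s))_x)$ is treated exactly as above: elementwise integration by parts against the piecewise-linear $\bar\varphi(s)$ reduces it to $\Delta x\sum_i(D^2\zeta(s))_i\,(P\lambda(s)-\lambda(s))(x_i)$, where the discrete $L^2$ norm of the nodal values of $P\lambda(s)-\lambda(s)$ is $O(\Delta x^2||\lambda(s)_{xx}||)$, and the discrete $H^2$ seminorm of $\bar\varphi(s)$ is now bounded \emph{uniformly} in $s$ and $\Delta x$ (from \eqref{eq:FEMcoords1} and Theorem~\ref{thm:diffcontrol}, $\bar\varphi$ being a single discrete trajectory with datum obeying \eqref{eq:startingposcond}). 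Using $||\lambda(s)_{xx}||\le D(T-s)^{-1/2}$ from \eqref{eq:infinitedimregularity4}, these integrate to $-D\Delta x^2$, so the lower side of \eqref{eq:errorrepr} is $\ge -D\Delta x^2$; combining the two sides gives \eqref{eq:deltaxconv}.

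I expect the main obstacle to be precisely the gradient term $(\mu_x,(P\psi-\psi)_x)$ with $\mu\in V$: because $P$ is the $L^2$ projection, not the interpolant, $(P\psi-\psi)_x$ is only first-order small in $L^2$, so one must exploit that $\mu$ is piecewise linear, pass to the nodal values $(P\psi-\psi)(x_i)=(P\psi-I\psi)(x_i)$, and recognize their weighted $\ell^2$ norm as a true $\Delta x^2$ quantity through the mass matrix and the interpolation/projection estimates. A secondary difficulty is bookkeeping the time singularities: the discrete adjoint feeding the upper-side integrand inherits, via Theorem~\ref{thm:diffcontrol} and Lemma~\ref{lem:H2proj}, the square of the $s^{-2/7}$ blow-up of $||\varphi(s)_{xx}||$, and only because $2/7<1/3$ does the product $s^{-6/7}$ stay integrable --- which is why the $A^{5/7}$-regularity in Theorem~\ref{thm:infinitedimregularity}, not merely the $(t-t_0)^{-1/2}$ bound, must be used, and why \eqref{eq:startingposcond} is the right hypothesis on $\varphi_0$.
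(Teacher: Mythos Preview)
Your proposal is correct and follows essentially the same route as the paper's proof: invoke the error representation in Theorem~\ref{thm:valueerror}, use Theorem~\ref{thm:HJsatisfied} (and its discrete analogue) to supply the superdifferential elements, and then reduce both Hamiltonian differences to the same nodal-value argument --- integrate by parts against the piecewise-linear factor, replace $P\psi-\psi$ at the nodes by $P\psi-I\psi$, pass to $L^2$ via the mass matrix, and close with the standard interpolation/projection estimate and the regularity results in Theorems~\ref{thm:infinitedimregularity} and~\ref{thm:diffcontrol} together with Lemma~\ref{lem:H2proj}. The time-singularity bookkeeping ($(T-s)^{-1/2}$ on the lower side, $s^{-6/7}$ on the upper side via the $A^{5/7}$-regularity) is exactly as in the paper.

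Two small remarks. First, your derivation of $\|A^{5/7}\varphi_0\|\le C$ is slightly more elaborate than needed: the discrete orthogonality relation you cite is not used in the paper, which simply bounds $|(\psi_n,\varphi_0)|\le \sqrt2 C/(n\pi)^2$ uniformly via $\sum_i|c_i|\le C$ and then sums $n^{-8/7}$; your parenthetical ``it is exactly $5/7<3/4$ that makes this converge'' shows you have the right mechanism, and the word ``bounded'' for the weight should be read as ``summable.'' Second, your Pythagorean argument gives $g(P\varphi(T))-g(\varphi(T))=O(\Delta x^4)$, since $P\varphi(T)-\varphi(T)\perp V$ forces the cross term to pair two $(I-P)$ factors; the paper is content with the cruder $O(\Delta x^2)$ via the Lipschitz estimate for $g$, so your bound is a harmless improvement.
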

\begin{remark}
Every reasonable approximation in $V$ of $\varphi_+$, for any $\Delta x$,
satisfies \eqref{eq:startingposcond}. The interpolant and the
projection are possible choices.
\end{remark}
\begin{proof}
As in the  proof of Theorem \ref{thm:diffcontrol}, whenever $D$ is
written in this proof it means a constant independent of $\Delta x$, but (possibly) dependent on $C$.
  
\emph{Step 1.} It will be shown that condition \eqref{eq:startingposcond} implies
\begin{equation}\label{eq:Afivefourths}
||A^{5/7} \varphi_0 || \leq D.
\end{equation}
Using \eqref{eq:Agamma} it follows that
\begin{equation}\label{eq:Asum}
||A^\gamma \varphi || =\big( \sum_{n=1}^\infty k_n^{2\gamma} (\psi_n,\varphi)^2 \big)^{1/2},
\end{equation}
where $k_n$ and $\psi_n$ are the eigenvalues and eigenfunctions of
$A$. Two partial integrations imply that
\begin{equation*}
(\psi_n,\varphi_0)= -\frac{1}{n^2 \pi^2} (\psi_n, (\varphi_0)_{xx}),
\end{equation*}
where $(\varphi_0)_{xx}$ is the distributional second derivative of
$\varphi_0$, i.e.\ 
\begin{equation*}
(\varphi_0)_{xx} = \Delta x \sum_{i=1}^{M-1} (D^2 \xi)_i \bar\delta_{i\Delta x},
\end{equation*}
with $\bar\delta_{i\Delta x}$ the Dirac delta distribution in $x=i \Delta
x$. Since $|\psi_n(x)| \leq \sqrt{2}$ for all $n \in \NatNum$ and all
$x \in [0,1]$ it holds that
\begin{align*}
|(\psi_n,\varphi_0)| & \leq \frac{\sqrt{2}}{n^2 \pi^2} \Delta x
 \sum_{i=1}^{M-1} |(D^2 \xi)_i| 
\leq \frac{\sqrt{2}}{n^2 \pi^2} \Delta x \sqrt{M-1} \big(
 \sum_{i=1}^{M-1} (D^2 \xi)_i^2 \big)^{1/2} \\ 
&\leq \frac{\sqrt{2}}{n^2
 \pi^2} \big( \Delta x \sum_{i=1}^{M-1} (D^2 \xi)_i^2 \big)^{1/2}
\leq \frac{\sqrt{2} C}{n^2 \pi^2}.
\end{align*}
It thereby follows that the sum in \eqref{eq:Asum} is finite (and
proportional to $C$) when $\gamma < 3/4$, e.g.\ $\gamma=5/7$.

\emph{Step 2.} In this step it is shown that there exists a solution
$\lambda$ to \eqref{eq:mildsols} with $\bar\varphi(s)$ playing the role
of $\varphi_0$, such that
\begin{equation}\label{eq:Hdiff1}
|H\big(P\lambda(s),\bar\varphi(s)\big)-H\big(\lambda(s),\bar\varphi(s)\big)| \leq  D \Delta
 x^2 (T-s)^{-1/2}.
\end{equation}
We start by showing that the starting position $\varphi_0$ in
\eqref{eq:startingposs} is bounded in $H_0^1(0,1)$:
\begin{multline*}
||(\varphi_0)_x||^2 = -\big( (\varphi_0)_{xx},\varphi_0 \big) \\
\leq D \big(\Delta x \sum_{i=1}^{M-1} (D^2 \xi)_i^2 \big)^{1/2}
||\varphi_0||_{L^\infty(0,1)} \leq D ||(\varphi_0)_x||, 
\end{multline*}
where the last inequality follows by a Sobolev inequality. Hence\linebreak $||(\varphi_0)_x||~ \leq~ D$.
By Theorem \ref{thm:diffcontrol} it follows that
$||\bar\varphi_x(s)|| \leq D$, for all $t_0 \leq s \leq T$. By Theorem \ref{thm:infinitedimregularity} it then
follows that $||\lambda(s)||_{H^2(0,1)} \leq D (T-s)^{-1/2}$.    
The Hamiltonian, $H$, consists of three parts; see
\eqref{eq:Hamiltonian}. The first of these is the most difficult when
\eqref{eq:Hdiff1} is to be proved, so we will focus on this one and 
let the other two parts be treated by the reader. The difference
between the first parts of the Hamiltonians in \eqref{eq:Hdiff1} is
given by
\begin{equation*}
-\Big(\bar\varphi_x(s),\big(\lambda(s)-P\lambda(s)\big)_x\Big) = 
\big(\bar\varphi_{xx}(s), \lambda(s)- P\lambda(s) \big),
\end{equation*}
where the factor $\delta$ is left out for convenience. We reuse
notation and let 
\begin{equation*}
\bar\varphi(s) =: \sum_{i=1}^{M-1} \xi^i v^i,
\end{equation*}
so that
\begin{equation*}
\bar\varphi_{xx}(s) = \Delta x \sum_{i=1}^{M-1} (D^2 \xi)_i
\bar\delta_{i\Delta x}.
\end{equation*}
As $||\bar\varphi_t(s)|| \leq D$ by Theorem \ref{thm:diffcontrol} it,
again using \eqref{eq:FEMcoords},
holds that 
\begin{equation*}
\big(\Delta x \sum_{i=1}^{M-1}(D^2 \xi)_i^2 \big)^{1/2} \leq D.
\end{equation*} 
Hence it holds that
\begin{multline}\label{eq:beforeinterpolant}
|\big(\bar\varphi_{xx}(s),\lambda(s)-P\lambda(s)\big)| \leq \Delta x 
\sum_{i=1}^{M-1} |(D^2 \xi)_i \big(\lambda(i\Delta x)- P\lambda(i\Delta
x)\big)| \\
\leq \big(\Delta x \sum_{i=1}^{M-1}(D^2 \xi)_i^2 \big)^{1/2}
\Big(\Delta x \sum_{i=1}^{M-1} \big(\lambda(i\Delta x) - P \lambda(i\Delta
x)\big)^2 \Big)^{1/2} \\
\leq D \Big(\Delta x \sum_{i=1}^{M-1} \big(\lambda(i\Delta x) - P \lambda(i\Delta
x)\big)^2 \Big)^{1/2}.
\end{multline}
By the use of the interpolant the last parenthesis in
\eqref{eq:beforeinterpolant} may be written
\begin{equation*}
\Big(\Delta x \sum_{i=1}^{M-1} \big(I \lambda(i\Delta x) - P \lambda(i\Delta
x)\big)^2 \Big)^{1/2}.
\end{equation*}
This minor difference simplifies the situation as $I \lambda(s) -P
\lambda(s) \in V$, which  makes comparison with the $L^2$ norm
possible. For an element 
\begin{equation*}
\kappa=\sum_{i=1}^{M-1} \eta^i v^i \in V
\end{equation*}
it holds that 
\begin{equation*}
||\kappa||^2 = \Delta x (\eta,B\eta)_2,
\end{equation*}
where $(\cdot,\cdot)_2$ is the Euclidean scalar product on $\Re^d$. All
eigenvalues of $B$ lie in the interval $[1/3,1]$, and hence
\begin{equation*}
||I\lambda -P\lambda||^2 \geq \third \Delta x \sum_{i=1}^{M-1} (\lambda(i\Delta x) - P \lambda(i\Delta
x))^2. 
\end{equation*}
It also holds that
\begin{equation*}
||I\lambda - P\lambda|| = ||P(I \lambda - \lambda)|| \leq ||I \lambda
  - \lambda|| \leq D \Delta x^2 ||\lambda||_{H^2(0,1)},
\end{equation*}
where the last inequality may be found in e.g.\ \cite{Brenner-Scott}.
By equation (\ref{eq:infinitedimregularity4}) we have that 
\begin{equation*}
||\lambda(s)||_{H^2(0,1)} \leq D (T-s)^{-1/2},
\end{equation*}
and so \eqref{eq:Hdiff1} holds. By Theorem \ref{thm:HJsatisfied}, an
element in $D^+ u_H\big(\bar\varphi(s),s\big)$ is given by
$\Big(\lambda(s),-H\big(\lambda(s),\bar\varphi(s)\big)\Big)$, and
thereby it is clear that the first
integral in \eqref{eq:errorrepr}
may be bounded by 
\begin{equation*}
D\Delta x^2\int_{t_0}^T (T-s)^{-1/2}ds \leq D\Delta x^2.
\end{equation*}  

\emph{Step 3.} In this step a bound for the second integral in
\eqref{eq:errorrepr} is derived. In \emph{Step 1} it was proved that 
$||A^{5/7} \varphi_0|| \leq D$. Theorem \ref{thm:infinitedimregularity}
then implies that $||\varphi(s)||_{H^2(0,1)} \leq
D(s-t_0)^{-2/7}$. Therefore, by Lemma \ref{lem:H2proj} and  Theorem \ref{thm:diffcontrol} there
exists a solution $\bar\lambda$ to \eqref{eq:approxHamiltonsyst}, with
$\big(P\varphi(s),s \big)$ in the role of $(\varphi_0,t_0)$, such that
\begin{equation*}
||\bar\lambda_t(s)|| \leq \big( D(s-t_0)^{-2/7} +F \big)^2 \leq D(s-t_0)^{-4/7}.
\end{equation*}
By \eqref{eq:FEMcoords2} it holds that
\begin{equation*}
\Big( \Delta x \sum_{i=1}^{M-1} \big(D^2 \theta(s)\big)_i^2
\Big)^{1/2} \leq D(s-t_0)^{-4/7},
\end{equation*}  
where $\theta$ is given by \eqref{eq:coorexpr} and
\eqref{eq:vectdefs}.
In order to be able to use the above information to get a bound of the
second integral in \eqref{eq:errorrepr}, we need that $\bar\lambda(s)$
is the spatial part of an element in $D^+ \bar u(P\varphi(s),s)$. This
follows from Lemma 3.3.16 and Theorem 7.4.17 in
\cite{Cannarsa-Sinestrari}. As in \emph{Step 2} we are satisfied with
considering only the first parts of the Hamiltonians. The only
difference is that now the partial integration is performed so that
$\bar\lambda(s)$ is distributionally differentiated twice:
\begin{multline*}
\Big| \Big(\bar\lambda_x(s),\big(\varphi(s)-P\varphi(s)\big)_x \Big) \Big| =
\big| \big(\bar\lambda_{xx}(s),\varphi(s)-P\varphi(s)\big)\big| \\
\leq \Big( \Delta x \sum_{i=1}^{M-1} \big(D^2 \theta(s)\big)_i^2
\Big)^{1/2} \Big(\Delta x \sum_{i=1}^{M-1}\big(\varphi(i\Delta
 x,s)-P\varphi(i\Delta x,s) \big)^2 \Big)^{1/2} \\
\leq D(s-t_0)^{-4/7} \Delta x^2 ||\varphi_{xx}(s)|| \leq D \Delta x^2 (s-t_0)^{6/7},
\end{multline*}
similarly as in \emph{Step 2}. The second integral in
\eqref{eq:errorrepr} is  therefore bounded by a term
\begin{equation*}
D \Delta x^2 \int_{t_0}^T (s-t_0)^{-6/7}ds = D \Delta x^2.
\end{equation*} 

\emph{Step 4.} It remains to show that the difference 
$g(P\varphi(T))-g(\varphi(T))$ is of the order $\Delta x^2$. Since
$||\varphi(T)||$ is uniformly bounded for all starting positions in a
bounded set in $L^2(0,1)$ we see by \eqref{eq:gdifference} that the difference in final
costs is less than $D||P\varphi(T)-\varphi(T)||$. Since $P I \varphi=I\varphi$,
where $I$ is the interpolant, introduced in \emph{Step2}, we have that
\begin{multline*}
||P\varphi(T)-\varphi(T)|| \leq ||P\big(\varphi(t)-I\varphi(T)\big)|| +
  ||I\varphi(T)-\varphi(T)|| \\
\leq 2 ||I\varphi(T)-\varphi(T)|| \leq D \Delta x^2
  ||\varphi_{xx}(T)|| \leq D \Delta x^2,
\end{multline*}
where the last inequality follows by  Theorem \ref{thm:infinitedimregularity}. 
\end{proof}
The next theorem provides an error estimate which makes comparison
with the situation where $\varphi_+$ is used as initial position
possible.
\begin{thm}\label{thm:comparisonphi+}
There exists a constant $D>0$ such that 
\begin{equation}\label{eq:comparisonphi+}
|u(\varphi_+,0)-u(P\varphi_+,0)| + |u(\varphi_+,0)-u(I\varphi_+,0)|
 \leq D \Delta x^2.
\end{equation}
\end{thm}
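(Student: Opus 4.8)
The plan is to reduce the statement to two standard facts: that the value function $u$ is Lipschitz continuous in the $L^2$ norm on bounded sets, and that $\varphi_+$ is approximated to second order in $L^2$ by both $P\varphi_+$ and $I\varphi_+$.

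First I would record that $\varphi_+$, being a minimizer of the energy $\int_0^1(\frac{\delta}{2}\varphi_x^2+\delta^{-1}V(\varphi))dx$, solves the elliptic boundary value problem $\delta\varphi_{xx}=\delta^{-1}V'(\varphi)$, $\varphi(0)=\varphi(1)=0$; since $V'$ is smooth after the modification of $V$, elliptic bootstrapping gives $\varphi_+\in H^2(0,1)\cap H_0^1(0,1)$ (in fact $C^\infty$), so $||\varphi_{+,xx}||$ is a fixed finite number. Consequently the standard approximation estimates (see e.g.\ \cite{Brenner-Scott}, and as already used in the proof of Theorem \ref{thm:spatialconvergence}) apply: $||\varphi_+-I\varphi_+||\leq D\Delta x^2||\varphi_{+,xx}||$ for the piecewise-linear interpolant, and $||\varphi_+-P\varphi_+||\leq||\varphi_+-I\varphi_+||\leq D\Delta x^2||\varphi_{+,xx}||$ since $P$ is the $L^2$-orthogonal projection onto $V$ and $I\varphi_+\in V$. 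In particular $P\varphi_+$ and $I\varphi_+$ lie in a fixed bounded subset of $L^2(0,1)$ (indeed of $H_0^1(0,1)$).

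Next I would establish the Lipschitz bound $|u(\psi_1,0)-u(\psi_2,0)|\leq D||\psi_1-\psi_2||$ for $\psi_1,\psi_2$ in a fixed bounded set of $L^2(0,1)$. This follows from \cite{Cannarsa-Frankowska2}, but it can also be seen directly by a variant of the argument in the proof of Theorem \ref{thm:boundedcontrol}: take an optimal control $\alpha$ for $u(\psi_1,0)$, bounded in $L^\infty(0,T;L^2)$ by Theorem \ref{thm:boundedcontrol}, and apply the same $\alpha$ to \eqref{eq:flow} started from $\psi_2$. Subtracting the two state equations cancels the control terms, so the inner-product and Gr\"onwall estimate gives $||\varphi^1(t)-\varphi^2(t)||\leq e^{\delta^{-1}|V''|T}||\psi_1-\psi_2||$; the running costs coincide, and the terminal costs differ by at most $D||\varphi^1(T)-\varphi^2(T)||\leq D||\psi_1-\psi_2||$ by the computation \eqref{eq:gdifference} together with the uniform bound on $||\varphi^i(T)||$ from Theorem \ref{thm:boundedcontrol}. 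Hence $u(\psi_2,0)\leq v_{\psi_2,0}(\alpha)\leq v_{\psi_1,0}(\alpha)+D||\psi_1-\psi_2||=u(\psi_1,0)+D||\psi_1-\psi_2||$, and the reverse inequality follows by symmetry.

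Combining the two steps with $\psi_1=\varphi_+$ and $\psi_2=P\varphi_+$, respectively $\psi_2=I\varphi_+$, immediately yields \eqref{eq:comparisonphi+}. I do not anticipate a genuine obstacle here; the only point requiring attention is the uniformity of the constants (the Lipschitz constant of $u$ and the constant in \eqref{eq:gdifference}), which is automatic because $\{\varphi_+\}\cup\{P\varphi_+,I\varphi_+:\Delta x>0\}$ is bounded in $L^2(0,1)$, and the $H^2$-regularity of $\varphi_+$ that is needed for the $O(\Delta x^2)$ approximation estimate is just elliptic regularity for the profile equation.
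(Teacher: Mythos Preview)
Your argument is correct, and it is genuinely simpler than the route taken in the paper. The paper does not use the $L^2$-Lipschitz property of $u$ directly; instead it invokes the semiconcavity of $u_H$ (Theorem~\ref{thm:semiconcave}) together with an explicit element of $D^+u_H$ furnished by Theorem~\ref{thm:HJsatisfied} and bounded via Theorem~\ref{thm:infinitedimregularity}. This yields the two-term estimate
\[
|u_H(\varphi^1,0)-u_H(\varphi^2,0)| \leq D\big(\|\varphi^1-\varphi^2\| + \|\varphi^1-\varphi^2\|_{H_0^1}^2\big),
\]
so the paper then needs both the $L^2$ \emph{and} the $H_0^1$ approximation rates for $I\varphi_+$, and for the projection part it appeals to the $H_0^1$-stability of $P$ from \cite{Crouzeix-Thomee} to transfer the $H_0^1$ interpolation bound to $P\varphi_+$.

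Your approach bypasses all of this: once $L^2$-Lipschitz continuity on bounded sets is in hand (and your direct Gr\"onwall argument for it is perfectly sound, the constants being uniform because $\{\varphi_+,I\varphi_+,P\varphi_+\}$ sits in a fixed $L^2$-ball), only the $O(\Delta x^2)$ bound in $L^2$ is needed, and the projection case follows from the trivial $\|\varphi_+-P\varphi_+\|\leq\|\varphi_+-I\varphi_+\|$. The trade-off is that the paper's semiconcavity-based estimate is sharper in principle (it isolates a quadratic remainder), which could matter in other situations, but for the present statement your route is both shorter and requires fewer auxiliary results.
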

\begin{remark}
The theorem shows that both the projection and the interpolant can be
chosen when approximating $\varphi_+$ in $V$.
\end{remark}
\begin{proof}
The semiconcavity of $u_H$ implies that for every bounded set $X
\subset H_0^1(0,1)$ there exists a constant $D$, such that 
\begin{equation}\label{eq:beginningconcave}
\varphi^1, \varphi^2 \in X \implies u_H(\varphi^1,0) -u_H(\varphi^2,0)
- (p, \varphi^1-\varphi^2) \leq D||\varphi^1 - \varphi^2||^2_{H_0^1(0,1)},
\end{equation}
where $p$ is the spatial part of any element in $D^+ u_H(\varphi^2,0)$
(compare \eqref{eq:semiconmodulus}). In Theorem \ref{thm:HJsatisfied}
it was proved that $\Big(
\lambda(0),-H\big(\lambda(0),\varphi^1\big)\Big)$ is one such element,
where $\lambda$ is a solution to \eqref{eq:mildsols} with
$\varphi_0=\varphi^2$. We may therefore take $p=\lambda(0)$ in
\eqref{eq:beginningconcave}. By Theorem
\ref{thm:infinitedimregularity}, $||p|| \leq D$ for some constant $D$
(it is even bounded in $H_0^1(0,1)$, but this is not needed here).
Plugging this boundedness into \eqref{eq:beginningconcave} results in
the inequality
\begin{equation*}
u_H(\varphi^1,0)-u_H(\varphi^2,0) \leq D\big(||\varphi^1 - \varphi^2||
+ ||\varphi^1-\varphi^2||^2_{H_0^1(0,1)}\big).
\end{equation*}
We may change places for $\varphi^1$ and $\varphi^2$ everywhere above,
and thereby obtain
\begin{equation}\label{eq:beginningsemiconres}
|u_H(\varphi^1,0)-u_H(\varphi^2,0)| \leq D\big(||\varphi^1 - \varphi^2||
+ ||\varphi^1-\varphi^2||^2_{H_0^1(0,1)}\big).
\end{equation}
Consider now $\varphi^1 = \varphi_+$ and $\varphi^2 = I\varphi_+$ or
$\varphi^2 = P \varphi_+$. For the interpolant, $I$, the following
bounds hold:
\begin{equation}\label{eq:interpbounds}
\begin{split}
||\varphi_+ - I \varphi_+|| &\leq
  D||(\varphi_+)_{xx}||_{L^\infty(0,1)} \Delta x^2, \\
||\varphi_+ - I \varphi_+||_{H_0^1(0,1)} &\leq
  D||(\varphi_+)_{xx}|| \Delta x,
\end{split}
\end{equation}
with a constant $D$ independent of $\Delta x$ and $\varphi_+$. Since
$(\varphi_+)_{xx}$ is bounded in $L^\infty(0,1)$ this together with
\eqref{eq:beginningsemiconres} directly shows that the interpolant
part of \eqref{eq:comparisonphi+} is correct. The projection part is
proved by using the result from \cite{Crouzeix-Thomee}, that the $L^2$
projection is stable in $H_0^1(0,1)$, i.e.
\begin{equation*}
||P \varphi||_{H_0^1(0,1)} \leq D ||\varphi||_{H_0^1(0,1)}.
\end{equation*}
It therefore holds that 
\begin{multline}\label{eq:stableproj}
||\varphi_+ - P\varphi_+||_{H_0^1(0,1)} \leq ||\varphi_+ -
  I\varphi_+||_{H_0^1(0,1)} +||I \varphi_+ - P\varphi_+||_{H_0^1(0,1)} \\
  = ||\varphi_+ - I\varphi_+||_{H_0^1(0,1)} + ||P(I\varphi_+ -
  \varphi_+)||_{H_0^1(0,1)} \leq (1+D)||\varphi_+ - I\varphi_+||_{H_0^1(0,1)}.
\end{multline}
The same technique as in \eqref{eq:stableproj} can also be used for
the $L^2$ norm, now using the obvious bound $||P\varphi|| \leq
||\varphi||$, which implies that
\begin{equation}\label{eq:stableproj2}
||\varphi_+ - P \varphi_+|| \leq 2 ||\varphi_+ - I\varphi_+||.
\end{equation}
The equations \eqref{eq:beginningsemiconres}, \eqref{eq:interpbounds},
\eqref{eq:stableproj} and \eqref{eq:stableproj2} imply that also the
projection part of \eqref{eq:comparisonphi+} is correct.
\end{proof}
Theorems \ref{thm:spatialconvergence} and \ref{thm:comparisonphi+}
directly imply the following corollary.
\begin{cor}\label{cor:truespaceconv}
There exists a constant $D$, such that
\begin{equation*}
|u(\varphi_+,0) - \bar u(P\varphi_+,0)| + |u(\varphi_+,0) - \bar
 u(I\varphi_+,0)| \leq D \Delta x^2.
\end{equation*}
\end{cor}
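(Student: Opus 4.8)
The plan is to obtain the corollary directly from the triangle inequality, using Theorem \ref{thm:spatialconvergence} and Theorem \ref{thm:comparisonphi+} as black boxes. First I would split
\[
|u(\varphi_+,0) - \bar u(P\varphi_+,0)| \leq |u(\varphi_+,0) - u(P\varphi_+,0)| + |u(P\varphi_+,0) - \bar u(P\varphi_+,0)|,
\]
and likewise with $I\varphi_+$ in place of $P\varphi_+$. The first summand on each right-hand side is exactly what Theorem \ref{thm:comparisonphi+} bounds, namely by $D\Delta x^2$.

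For the second summand I would apply Theorem \ref{thm:spatialconvergence} with the initial datum $\varphi_0 = P\varphi_+ \in V$ (respectively $\varphi_0 = I\varphi_+ \in V$). The only point that genuinely needs an argument is the verification of the hypothesis \eqref{eq:startingposcond}, i.e.\ that the discrete seminorm $\big(\Delta x \sum_{i=1}^{M-1}(D^2\xi)_i^2\big)^{1/2}$ of the nodal coefficients of $P\varphi_+$, resp.\ of $I\varphi_+$, stays bounded by a constant independent of $\Delta x$. For the interpolant this follows from the fact that the interior second difference quotients of $I\varphi_+$ are controlled by $\|(\varphi_+)_{xx}\|_{L^\infty(0,1)}$, which is finite since $\varphi_+$ is a smooth stationary solution; for the projection it follows from Lemma \ref{lem:H2proj} applied to $\psi = \varphi_+ \in H^2(0,1)\cap H^1_0(0,1)$. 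This is precisely the content of the remark following Theorem \ref{thm:spatialconvergence}. Once \eqref{eq:startingposcond} holds, Theorem \ref{thm:spatialconvergence} gives $|u(P\varphi_+,0) - \bar u(P\varphi_+,0)| \leq D\Delta x^2$ and, analogously, $|u(I\varphi_+,0) - \bar u(I\varphi_+,0)| \leq D\Delta x^2$.

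Combining the two triangle-inequality estimates with the two bounds from Theorem \ref{thm:comparisonphi+} and summing then yields the claimed inequality. There is no real obstacle in this argument: it is essentially a bookkeeping step, and the only nontrivial ingredient beyond the cited theorems is the (standard, already-used) finite-element interpolation and $L^2$-projection estimates needed to check \eqref{eq:startingposcond} for the two specific approximants of $\varphi_+$.
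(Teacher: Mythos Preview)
Your argument is correct and is exactly the approach the paper takes: the paper simply states that Theorems \ref{thm:spatialconvergence} and \ref{thm:comparisonphi+} directly imply the corollary, and your triangle-inequality splitting together with the verification of hypothesis \eqref{eq:startingposcond} via Lemma \ref{lem:H2proj} and the interpolation estimate (which is the content of the remark following Theorem \ref{thm:spatialconvergence}) is precisely what ``directly imply'' means here.
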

\section{Discretization in time}\label{sec:timediscr}
In \cite{Sandberg-Szepessy} the method \emph{Symplectic Pontryagin} for
approximation of optimally controlled ODE:s is constructed and
analyzed. It is a Symplectic Euler discretization for a Hamiltonian
system, involving the state and
dual variables associated with the control problem, with a regularized
Hamiltonian. In the present situation, when the Hamiltonian is smooth,
the Symplectic Pontryagin method reduces to ordinary Symplectic Euler,
since no need for regularization exists. The theory in
\cite{Sandberg-Szepessy} can be used to show that the difference
between the value function for a system with only spatial
discretization and the value function for a system with discretization
in space \emph{and} time, is of the order $\Delta t$, where $\Delta t$
is the size of the temporal discretization. 
It is, however, desirable to achieve more than this. In order for the
estimate on the temporal discretization to be useful the constant in
front of $\Delta t$ in the error estimate needs to be really constant,
i.e.\ independent of $\Delta x$.

The theorems in \cite{Sandberg-Szepessy} do not directly provide the
desired result. This has to do with the fact that the second order
difference quotient operator $D^2$, defined in
\eqref{eq:diffquotient}, has norm proportional to $1/\Delta
x^2$. Furthermore, the proof in \cite{Sandberg-Szepessy} requires a
bound on the derivatives
$\partial\tilde\lambda^{n+1}/\partial\tilde\varphi^n$, where
$\tilde\varphi$ and $\tilde\lambda$ are obtained with the Symplectic
Pontryagin method. The problem of large norm of $D^2$ can be handled
using that it is a negative operator. But in addition to this we also
need to bound $\partial\tilde\lambda^{n+1}/\partial\tilde\varphi^n$
independently of $\Delta x$ in some proper sense.

The proof of convergence of the Symplectic Euler method given here is
based on another technique. It uses that the present problem admits
optimal controls which are regular by Theorem
\ref{thm:diffcontrol}. It also involves an assumption about the
derivative $\tilde\varphi_x$, and another similar assumption. Under
these assumptions it is shown in Theorem \ref{thm:timediscrconv} that
a minimum of a  forward Euler approximation of
control problem \eqref{eq:approxu}, \eqref{eq:approxphievol} has an
error $C\Delta t$ in the objective, where $C$ does not depend on
$\Delta x$. In Theorem \ref{thm:sympleuler} it is shown that the
solution to this minimization problem is equivalent to the solution of
a Symplectic Euler scheme, and hence the desired property for the
Symplectic Euler scheme is achieved. The main difference in the result
when the present method is used compared to a result using the theory
in \cite{Sandberg-Szepessy} is that the present result needs an
assumption on the derivative $\tilde\varphi_x$ whereas
\cite{Sandberg-Szepessy} needs control over
$\partial\tilde\lambda^{n+1}/\partial \tilde \varphi^n$. The
assumptions on  $\tilde\varphi_x$ seem easier to verify. The numerical
tests performed in Section \ref{sec:NumRes} support that it is true.

We now present the setting of the aforementioned discretized
optimization problem. Consider the time-discrete state
$\{\tilde\varphi^n\}_{n=0}^N$, which is a forward Euler approximation of
the state $\bar\varphi$ in \eqref{eq:approxphievol} and is given by
\begin{equation}\label{eq:timediscretephievol}
(\tilde\varphi^{n+1},v)=(\tilde\varphi^n,v)+\Delta t \big( -\delta
  (\tilde\varphi^n_x,v_x) +(-\delta^{-1}V'(\tilde\varphi^n) +
  \tilde\alpha^n,v)\big), \quad \text{for all } v\in V,
\end{equation} 
where $\{\tilde\alpha^n\}_{n=0}^{N-1}$ is a time-discrete control. The
discrete state $\tilde\varphi^n$ therefore corresponds to
$\bar\varphi(t_n)$, where $t_n=n \frac{T}{N} \equiv n \Delta t$. By
\eqref{eq:timediscretephievol} it is possible to define a discrete value
function for all times $t_m$:
\begin{equation}\label{eq:timediscretevalue}
\tilde u (\tilde\varphi_0,t_m)=\min_{\{\tilde\alpha^n\}_{n=m}^{N-1}}
\big( g(\tilde\varphi_N)+ \Delta t \sum_{n=m}^{N-1} h(\tilde \alpha^n)\big),
\end{equation}
where $\{\tilde\varphi^n\}$ solves \eqref{eq:timediscretephievol} and
$\tilde\varphi^m=\tilde\varphi_0$. For the proof of Theorem
\ref{thm:timediscrconv} we also introduce the 
discrete state $\{\mathring\varphi^n\}_{n=0}^{N}$. It
is also given by a forward Euler time stepping scheme, but its
evolution is determined by an optimal control $\bar\alpha$ to the
time-continuous problem \eqref{eq:approxphievol}:
\begin{equation}\label{eq:timediscretephievol2}
(\mathring\varphi^{n+1},v)=(\mathring\varphi^n,v)+\Delta t \big( -\delta
  (\mathring\varphi^n_x,v_x) +(-\delta^{-1}V'(\mathring\varphi^n) +
  \bar\alpha(t_n),v)\big), \ \text{for all } v\in V.
\end{equation}
We will consider starting positions $\bar\varphi_0$ in finite
element spaces $V$ satisfying
\begin{equation}\label{eq:startingpositions}
\delta\big((\bar\varphi_0)_x,v_x\big)+\delta^{-1}\big(V'(\bar\varphi_0\big),v)=0, \quad
\text{for all } v \in V.
\end{equation}
We are now ready for the theorem on time discretization convergence.
\begin{thm}\label{thm:timediscrconv}
Assume there exists a function $r:\Re^+ \rightarrow \Re^+$ such that
for all $\Delta t \leq r(\Delta x)$ there are solutions
$\{\tilde\varphi^n\}_{n=0}^N$ and $\{\mathring\varphi^n \}_{n=0}^N$ with 
$\tilde\varphi^0=\mathring\varphi^0=\bar\varphi_0$, where $\bar\varphi_0$
satisfies \eqref{eq:startingpositions}, and 
\begin{equation}\label{eq:gradincrease}
||\tilde\varphi^{n+1}_x-\tilde\varphi^n_x|| +
  ||\mathring\varphi^{n+1}_x-\mathring\varphi^n_x|| \leq C \Delta t 
\end{equation}
for all $0 \leq n < N$, where $C$ does not depend on $\Delta x$. Then
\begin{equation*}
|\tilde u(\bar\varphi_0,0)- \bar u(\bar\varphi_0,0) | \leq D \Delta t
\end{equation*}
for $\Delta t \leq r(\Delta x)$, where $D$ does not depend on $\Delta x$.
\end{thm}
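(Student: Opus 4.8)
The plan is to bound $\tilde u(\bar\varphi_0,0)-\bar u(\bar\varphi_0,0)$ from above and from below separately. In each direction one turns an optimal control for one of the two problems into an admissible control for the other --- by sampling, respectively by piecewise-constant reconstruction --- and estimates the resulting discrepancy in the running and in the terminal cost. In both directions the running-cost discrepancy will be $O(\Delta t)$ for soft reasons, and the terminal-cost discrepancy is reduced (via boundedness of $g$, as used in \eqref{eq:gdifference}) to an $L^2$ error estimate for forward Euler, $||\text{(Euler state)}-\text{(exact semidiscrete state)}||\le D\Delta t$ at $t=T$, with $D$ independent of $\Delta x$. Establishing that estimate $\Delta x$-uniformly is the whole point, and hypothesis \eqref{eq:gradincrease} is tailored to it.

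For $\tilde u\le\bar u+D\Delta t$: let $\bar\alpha$ be optimal for $\bar u(\bar\varphi_0,0)$ with state $\bar\varphi$, drive \eqref{eq:timediscretephievol2} with the sampled control $\bar\alpha(t_n)$, obtaining $\mathring\varphi$, so that $\tilde u(\bar\varphi_0,0)\le g(\mathring\varphi^N)+\Delta t\sum_{n=0}^{N-1}h(\bar\alpha(t_n))$. Since $\bar\alpha=-\bar\lambda$ and $||\bar\lambda_t||$ is bounded by Theorem \ref{thm:diffcontrol}, the map $t\mapsto h(\bar\alpha(t))=||\bar\alpha(t)||^2/2$ is Lipschitz, so the rectangle sum differs from $\int_0^T h(\bar\alpha)\,ds$ by $O(\Delta t)$; it remains to show $||\mathring\varphi^N-\bar\varphi(T)||\le D\Delta t$. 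For $\bar u\le\tilde u+D\Delta t$: let $\tilde\alpha$ be discrete-optimal for $\tilde u(\bar\varphi_0,0)$ with state $\tilde\varphi$, extend it to $\bar\alpha|_{[t_n,t_{n+1})}:=\tilde\alpha^n$, and let $\bar\varphi$ solve \eqref{eq:approxphievol} with this $\bar\alpha$; then the running costs coincide exactly, $\int_0^T h(\bar\alpha)\,ds=\Delta t\sum_n h(\tilde\alpha^n)$, and it remains to show $||\bar\varphi(T)-\tilde\varphi^N||\le D\Delta t$. Both remaining claims are instances of: $w$ solves \eqref{eq:approxphievol} with a control $\beta$, $w^n$ is its forward Euler approximation driven by $\beta^n$ (with $(w,w^n,\beta^n)=(\bar\varphi,\mathring\varphi^n,\bar\alpha(t_n))$, resp.\ $(\bar\varphi,\tilde\varphi^n,\tilde\alpha^n)$ with $\beta$ piecewise constant), and $||w^{n+1}_x-w^n_x||\le C\Delta t$ by \eqref{eq:gradincrease}; one must conclude $||w(T)-w^N||\le D\Delta t$ uniformly in $\Delta x$.

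I would prove this by the discrete energy method. With $e^n=w(t_n)-w^n\in V$, subtract the weak form of \eqref{eq:approxphievol} integrated over $[t_n,t_{n+1}]$ from the scheme and test with $v=e^{n+1}$, using $(e^{n+1}-e^n,e^{n+1})\ge\half\big(||e^{n+1}||^2-||e^n||^2\big)$. The genuinely explicit-Euler contribution is $-\delta\Delta t\,(e^n_x,e^{n+1}_x)=-\delta\Delta t\,||e^n_x||^2-\delta\Delta t\,(e^n_x,(e^{n+1}-e^n)_x)$; one discards the negative first part and dominates the cross term by Young's inequality, at the cost of $\frac{\delta\Delta t}{2}||(e^{n+1}-e^n)_x||^2$. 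Now $(e^{n+1}-e^n)_x=(w(t_{n+1})-w(t_n))_x-(w^{n+1}-w^n)_x$: the discrete piece is $O(\Delta t)$ by \eqref{eq:gradincrease}, and if the continuous-solution piece is likewise $O(\Delta t)$ then $\sum_n\frac{\delta\Delta t}{2}||(e^{n+1}-e^n)_x||^2$ contributes only $O(\Delta t^2)$ to $||e^N||^2$. The nonlinear term splits as $||V'(w(s))-V'(w(t_n))||+||V'(w(t_n))-V'(w^n)||$, the first an $O(\Delta t)$ consistency error (once $||w_t||_{L^2(0,T;L^2)}$ is bounded uniformly in $\Delta x$) and the second $\le|V''|\,||e^n||$, feeding a discrete Gr\"onwall; in the sampled case there is also $||\bar\alpha(t_n)-\bar\alpha(s)||\le\Delta t\,||\bar\alpha_t||=O(\Delta t)$. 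Discrete Gr\"onwall then gives $||e^N||\le D\Delta t$, with $D$ depending only on $T$, $\delta$, $|V'|$, $|V''|$, the constant in \eqref{eq:gradincrease}, and the uniform regularity bounds used.

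The main obstacle is exactly those regularity inputs for the \emph{exact} semidiscrete flow: $||w_t||_{L^2(0,T;L^2)}\le D$ and the $\Delta x$-uniform $O(\Delta t)$ bound on $||(w(t_{n+1})-w(t_n))_x||$. For $w=\bar\varphi$ the optimal state these follow from the energy estimates already in the proof of Theorem \ref{thm:diffcontrol} together with $||\bar\lambda||_{H_0^1}=||\bar\alpha||_{H_0^1}$ bounded; the delicate point is the behaviour as $t\searrow0$, and this is precisely what the compatibility hypothesis \eqref{eq:startingpositions} takes care of --- it says $\bar\varphi_0$ is a zero-control equilibrium of the semidiscrete flow, so $\bar\varphi_t(0)=\bar\alpha(0)\in V$ with $||\bar\varphi_t(0)||_{H_0^1}$ bounded uniformly in $\Delta x$, ruling out the initial layer a generic datum would produce and making the Euler consistency error genuinely first order down to $n=0$. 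For the piecewise-constant-control solution appearing in the lower-bound step, $\bar\varphi_t$ jumps by $\tilde\alpha^n-\tilde\alpha^{n-1}$ at each grid point, and one must check that this stays uniformly small --- which comes down to a $\Delta x$-uniform regularity estimate for the discrete optimal control $\tilde\alpha=-\tilde\lambda$, a discrete counterpart of Theorem \ref{thm:diffcontrol} --- so that the continuous flow inherits the needed time-regularity and the same energy computation applies. Combining the two one-sided bounds yields $|\tilde u(\bar\varphi_0,0)-\bar u(\bar\varphi_0,0)|\le D\Delta t$ for $\Delta t\le r(\Delta x)$ with $D$ independent of $\Delta x$.
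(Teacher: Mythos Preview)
Your upper bound $\tilde u\le\bar u+D\Delta t$ follows the paper's Step~2 exactly: sample $\bar\alpha$, run $\mathring\varphi$, compare. For the lower bound $\bar u\le\tilde u+D\Delta t$, however, the paper takes a different route. It does \emph{not} feed the discrete control back into the semidiscrete flow; instead it extends $\tilde\varphi^n$ piecewise linearly, writes $\tilde u-\bar u=\int_0^T\big(\tfrac{d}{ds}\bar u(\tilde\varphi(s),s)+h(\tilde\alpha)\big)\,ds$, and bounds the integrand from below using an element $p\in D^+\bar u(\tilde\varphi(s),s)$ together with the Hamilton--Jacobi inequality $p_t+H(p_\varphi,\tilde\varphi(s))\ge0$. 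Taking $p_\varphi=\bar\lambda(s)$, the semidiscrete dual emanating from $(\tilde\varphi(s),s)$, one needs only $\|\bar\lambda_x(s)\|$ bounded (Theorem~\ref{thm:diffcontrol}) and $\|\tilde\varphi^n_x-\tilde\varphi_x(s)\|\le C\Delta t$ (assumption~\eqref{eq:gradincrease}); nothing about the time-discrete dual $\tilde\lambda$ is ever used.

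Your execution has a genuine gap. When you subtract and test with $e^{n+1}$, the diffusion part is $-\delta\int_{t_n}^{t_{n+1}}((w(s)-w^n)_x,e^{n+1}_x)\,ds$, not $-\delta\Delta t(e^n_x,e^{n+1}_x)$; the missing consistency piece $-\delta\int_{t_n}^{t_{n+1}}((w(s)-w(t_n))_x,e^{n+1}_x)\,ds$ is paired against a \emph{gradient} and can only be handled if $\|(w(s)-w(t_n))_x\|=O(\Delta t)$ for the continuous flow $w$. For your first direction this is plausible from further energy estimates, as you say. But for your second direction $w$ is the semidiscrete flow driven by the piecewise-constant extension of $\tilde\alpha$, and obtaining such gradient time-regularity for $w$ does require the ``discrete counterpart of Theorem~\ref{thm:diffcontrol}'' you defer. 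That result is nowhere established in the paper, and the paper's Hamilton--Jacobi argument is precisely what lets one avoid it.

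There is an easy fix that rescues your comparison strategy and keeps it HJ-free: replace your discrete-in-time energy method by the continuous-in-time one the paper uses in Step~2. Extend the discrete state piecewise linearly and test the difference of the two evolutions against $w(t)-\tilde\varphi(t)$ (respectively $\bar\varphi(t)-\mathring\varphi(t)$). The dissipative term $-\delta\|(w-\tilde\varphi)_x\|^2$ then absorbs half of Young's inequality, and the only gradient increment that survives is $\|(\tilde\varphi-\tilde\varphi^n)_x\|$ (respectively $\|(\mathring\varphi-\mathring\varphi^n)_x\|$), which is exactly what \eqref{eq:gradincrease} controls. With this change your two-sided comparison goes through using only \eqref{eq:gradincrease} and the bound on $\|\bar\alpha_t\|$ from Theorem~\ref{thm:diffcontrol}, and it becomes a valid, arguably more elementary, alternative to the paper's viscosity-solution argument in Step~1.
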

\begin{remark}
By the numerical computations performed in Section \ref{sec:NumRes} it
seems plausible that \eqref{eq:gradincrease} holds.
\end{remark}
\begin{remark}
The proof would be valid without inclusion of the function $r$. 
However, since the forward Euler method is used it seems reasonable to
believe that \eqref{eq:gradincrease} would not be valid for all
$\Delta t$.
\end{remark}
\begin{proof}
As for Theorem \ref{thm:valueerror} the proof is divided into two
steps. We obtain in the first step a lower bound for $\tilde
u(\bar\varphi_0,0)- \bar u (\bar\varphi_0,0)$, and in the second step a
corresponding upper bound. The first step in this proof is similar to
the first step in the proof of Theorem \ref{thm:valueerror}, while the
corresponding second steps differ. We denote an optimal pair (control
and state) for $\bar u$ by $\bar\alpha$ and $\bar\varphi$, and an optimal
pair for $\tilde u$ by $\{\tilde\alpha^n\}$ and $\{\tilde\varphi^n \}$.

\emph{Step 1.} This part of the proof starts by an extension of
the initially time-discrete state $\{\tilde\varphi^n\}$ to a piecewise
linear time-continuous function $\tilde\varphi : [0,T] \rightarrow V$ as
follows:
\begin{equation*}
\tilde\varphi(t) \equiv \frac{t_{n+1}-t}{\Delta t}\tilde\varphi^n +
\frac{t-t_n}{\Delta t} \tilde\varphi^{n+1}, \quad \text{for } t_n \leq t
\leq t_{n+1}.
\end{equation*}
As in the proof of Theorem \ref{thm:valueerror} we have 
\begin{equation}\label{eq:udiff}
\tilde u(\bar\varphi_0,0)-\bar u (\bar\varphi_0,0) = \int_0^T \frac{d}{ds}
\bar u (\tilde\varphi(s),s)ds + \Delta t \sum_{i=0}^{N-1} h(\tilde \alpha^i).
\end{equation}
In order to be able to use that $\bar u$ solves a Hamilton-Jacobi
equation we note that the right hand side in \eqref{eq:udiff} may be
written
\begin{equation*}
\sum_{i=0}^{N-1} \int_{t_i}^{t_{i+1}} \big(\frac{d}{ds} \bar u
(\tilde\varphi(s),s)+h(\tilde\alpha^i) \big)ds,
\end{equation*}
and thus we may focus our attention on one time interval
$[t_n,t_{n+1}]$.
We also note that equation \eqref{eq:approxphievol} defines a flow
$\bar f:V\times V \rightarrow V$ which is defined by
\begin{equation}\label{eq:fbar}
(\bar f(\bar\varphi,\bar\alpha),v)= -\delta(\bar\varphi_x,v_x) +
  (\delta^{-1} V'(\bar\varphi)+\bar\alpha,v), \quad \text{for all } v \in V.
\end{equation} 
Let now $p(s)=\big(p_\varphi(s),p_t(s)\big)$ be any element in $D^+
\bar u\big(\tilde\varphi(s),s\big)$.
Similarly as in the proof of Theorem \ref{thm:valueerror} we have for
almost every  $s \in [t_n,t_{n+1}]$
\begin{equation}\label{eq:fdiff}
\begin{split}
&\frac{d}{ds}\bar u (\tilde\varphi(s),s) +
  h(\tilde\alpha^i)
\geq p_t(s) + \big(p_\varphi(s),\bar f(\tilde\varphi^n,\tilde\alpha^n)\big) 
\\
& =p_t(s) +
\underbrace{\big( p_\varphi (s), \bar f(\tilde\varphi(s),\tilde\alpha^n)\big) +
h(\tilde\alpha^n)}_{\geq H(p_{\varphi}(s),\tilde\varphi(s))} + \big(p_\varphi(s),\bar
f(\tilde\varphi^n,\tilde\alpha^n)-\bar f(\tilde\varphi(s),\tilde\alpha^n)
\big) \\
&\geq \big(p_\varphi(s),\bar
f(\tilde\varphi^n,\tilde\alpha^n)-\bar f(\tilde\varphi(s),\tilde\alpha^n)
\big),
\end{split}
\end{equation}
since $p_t(s)+H(p_\varphi(s),\tilde\varphi(s)) \geq 0$ as $\bar u $ is a
Hamilton-Jacobi viscosity solution. By assumption
\eqref{eq:gradincrease} it follows that $\tilde\varphi_x(s)$ is bounded
for $0 \leq s \leq T$ independently of $\Delta x$. We are therefore
free to use $\tilde\varphi(s)$  as
$\bar\varphi_0$ in Theorem \ref{thm:diffcontrol} so that the
$\bar\lambda(s)$ (corresponding to $\bar u (\tilde\varphi(s),s)$) is
bounded in $H_0^1$ independently of $\Delta x$. For such a
$\bar\lambda(s)$ we have
\begin{align*}
 & \big | \big(\bar\lambda(s),\bar f(\tilde\varphi^n,\tilde\alpha^n)-\bar
 f(\tilde\varphi(s),\tilde\alpha^n)\big) \big| \\
& \leq \delta \big| \big(\bar\lambda_x(s),\tilde\varphi^n_x-\tilde\varphi_x(s)\big) \big| +
 \delta^{-1} \big|\big(V'(\tilde\varphi^n)-V'(\tilde\varphi(s)),\bar\lambda(s)\big)\big|
 \leq C \Delta t,
\end{align*} 
with $C$ independent of $\Delta x$ by \eqref{eq:gradincrease}. 
It is now used that $\bar\lambda(s)$ is the spatial part of an element
in $D^+ \bar u(\tilde\varphi(s),s)$.
It thereby   holds that the right hand side in \eqref{eq:fdiff}
is less than $C\Delta t$ in magnitude.

\emph{Step 2.} We start by noting that
\begin{equation}\label{eq:bound2}
\bar u(\bar\varphi_0,0)-\tilde u (\bar\varphi_0,0) \geq
g(\bar\varphi(T))+\int_0^T h(\bar\alpha)dt - \big(
g(\mathring\varphi^N)+\Delta t \sum_{i=0}^{N-1} h(\bar\alpha(t_i)) \big).
\end{equation}
The difference between the running costs in \eqref{eq:bound2} is 
\begin{equation*}
\sum_{i=0}^{N-1} \int_{t_i}^{t_{i+1}} \big( h(\bar\alpha(t))-h(\bar\alpha(t_n))\big)dt.
\end{equation*}
Using that $h(\alpha)=||\alpha||^2/2$ we have that 
\begin{multline*}
|h(\bar\alpha(t))-h(\bar\alpha(t_n))| = \half
 |(\bar\alpha(t)+\bar\alpha(t_n),\bar\alpha(t)-\bar\alpha(t_n))| \\
\leq
 \half
 ||\bar\alpha(t)+\bar\alpha(t_n)||\cdot||\bar\alpha(t)-\bar\alpha(t_n)|| \leq C\Delta t,
\end{multline*}
where we have used the result in Theorem \ref{thm:diffcontrol} on the
boundedness of the control and its derivative (remember that
$\bar\alpha= - \bar\lambda$). 
It remains to show that the difference between the terminal costs in
\eqref{eq:bound2} behaves similarly. As in \emph{Step 1} we now extend
the discrete state $\{\mathring \varphi^n\}$ to a continuous function:
\begin{equation*}
\mathring\varphi(t) \equiv \frac{t_{n+1}-t}{\Delta t}\mathring\varphi^n +
\frac{t-t_n}{\Delta t} \mathring\varphi^{n+1}, \quad \text{for } t_n \leq t
\leq t_{n+1}.
\end{equation*}
For $t_n < t < t_{n+1}$ the evolution equations for $\bar\varphi$ and
$\mathring\varphi$ look as follows:
\begin{align*}
(\bar\varphi_t,v)&=-\delta(\bar\varphi_x,v_x)+(-\delta^{-1}V'(\bar\varphi)
  +\bar\alpha,v), \\
(\mathring\varphi_t,v)&=-\delta(\mathring\varphi_x^n,v_x)+(-\delta^{-1}V'(\mathring\varphi^n)
  +\bar\alpha(t_n),v), 
\end{align*}
for all $v \in V$. Subtract these two equations and let
$v=\bar\varphi-\mathring\varphi$ to get:
\begin{align*}
&\half\frac{d}{dt} ||\bar\varphi - \mathring\varphi||^2 \\
&= -\delta
(\bar\varphi_x - \mathring\varphi_x^n,\bar\varphi_x -\mathring\varphi_x) +\delta^{-1}(V'(\mathring\varphi^n)-V'(\bar\varphi),\bar\varphi -\mathring\varphi)\\
&\quad\quad\quad + (\bar\alpha - \bar\alpha(t_n),\bar\varphi -\mathring\varphi) \\
&=-\delta ||\bar\varphi_x-\mathring\varphi_x||^2 +
\delta(\mathring\varphi_x^n-\mathring\varphi_x,\bar\varphi_x-\mathring\varphi_x)
\\
&\quad\quad\quad +\delta^{-1}(V'(\mathring\varphi^n)-V'(\bar\varphi),\bar\varphi -\mathring\varphi)
+ (\bar\alpha - \bar\alpha(t_n),\bar\varphi -\mathring\varphi) \\
& \leq -\delta ||\bar\varphi_x-\mathring\varphi_x||^2 +
\delta\frac{||\mathring\varphi^n_x - \mathring\varphi_x||^2}{2} +
\delta\frac{||\bar\varphi_x - \mathring\varphi_x||^2}{2} \\
&\quad + \delta^{-1} |V''|
\cdot ||\mathring\varphi^n - \mathring\varphi + \mathring\varphi-\bar\varphi||\cdot
||\bar\varphi- \mathring\varphi|| + \frac{||\bar\alpha-
  \bar\alpha(t_n)||^2}{2} + \frac{||\bar\varphi -\mathring\varphi||^2}{2} \\
& \leq \delta\frac{||\mathring\varphi^n_x - \mathring\varphi_x||^2}{2} +\frac{\delta^{-1}|V''|}{2}||\mathring\varphi^n - \mathring\varphi||^2 \\
&\quad\quad\quad +\frac{\delta^{-1}|V''| + 1}{2} ||\bar\varphi -\mathring\varphi||^2 + \frac{||\bar\alpha-
  \bar\alpha(t_n)||^2}{2}.
\end{align*}
According to a Poincar\'e inequality (see e.g.\ Theorem 5.3.5 in
\cite{Brenner-Scott}), using the Dirichlet conditions, we have that 
$||\mathring\varphi^n - \mathring\varphi|| \leq C||\mathring\varphi^n_x -
\mathring\varphi_x||$. If now Gr\"onwall's Lemma is  used together with
the fact that
$||\bar\alpha-\bar\alpha^n||+||\mathring\varphi^n_x - \mathring\varphi_x||
\leq C \Delta t$, we have that $||\bar\varphi(T)-\mathring\varphi(T)||\leq
\Delta t$. Since $g(\bar\varphi(T))$ is bounded independently of $\Delta
x$ we have, similarly as in the proof of Theorem
\ref{thm:boundedcontrol},  that
$|g(\bar\varphi(T))-g(\mathring\varphi(T))|\leq C \Delta t$.
\end{proof}
As convergence of the forward Euler method has now been proved, the
\emph{Symplectic Euler method}, which
can be used to find the forward Euler solution, is now presented. It
is given by the system
\begin{equation}\label{eq:sympleuler}
\begin{split}
(\tilde\varphi^{n+1},v) &= (\tilde\varphi^n,v) + \Delta t H_\lambda
  (\tilde\lambda^{n+1},\tilde\varphi^n;v) \\
&= (\tilde\varphi^n,v)+\Delta t
  \big( -\delta (\tilde\varphi^n_x,v_x)- \delta^{-1}(V'(\tilde\varphi^n),v) 
  - (\tilde\lambda^{n+1},v)\big), \\
\tilde\varphi^0 & =\bar\varphi_0, \\
(\tilde\lambda^n,v) &= (\tilde\lambda^{n+1},v) + \Delta t
  H_\varphi(\tilde\lambda^{n+1},\tilde\varphi^n;v) \\
& = (\tilde\lambda^{n+1},v)
  + \Delta t
  \big(-\delta(\tilde\lambda^{n+1}_x,v_x)-\delta^{-1}(\tilde\lambda^{n+1}V''(\tilde\varphi^n),v)\big),
  \\
\tilde\lambda^N & = g'(\tilde\varphi^N)=2K(\tilde\varphi^N-P \varphi_-),
\end{split}
\end{equation} 
where $g'$ is a G\^ateaux derivative and $H_\lambda(\cdot;v)$,
$H_\varphi(\cdot;v)$ are G\^ateaux derivatives in the direction $v$. For
every minimizer $\{\tilde\alpha^n\}$ in  \eqref{eq:timediscretevalue}
there exists a solution to \eqref{eq:sympleuler} with
$\tilde\lambda^{n+1} = -\tilde\alpha^n$ for all $n$. In order to prove
this we first state a lemma.
\begin{lemma}\label{lem:discretesemiconcavity}
The value function $\tilde u(\cdot,t_n)$ is semiconcave for every $n$.
\end{lemma}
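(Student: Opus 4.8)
The plan is to adapt the proof of Theorem~\ref{thm:semiconcave} to the time-discrete problem, where it is in fact easier: the time level $t_n$ is held fixed, so no dilation of the control in time is needed and one common control sequence can be used for all three comparison trajectories. Throughout I would allow constants to depend on the (fixed) parameters $\Delta x$ and $\Delta t=T/N$, on a bounded set $X\subset V$ containing the starting position, and on the data of the problem; no uniformity in $\Delta x$ is asserted and none is required, since this lemma is only used qualitatively. The case $n=N$ is immediate because $\tilde u(\cdot,t_N)=g$ is quadratic, so fix $n<N$. First I would reduce the infimum in \eqref{eq:timediscretevalue} to uniformly bounded controls: for $\tilde\varphi_0\in X$, the zero control gives $\|\tilde\varphi^N\|\le C$ by a discrete Gr\"onwall argument applied to \eqref{eq:timediscretephievol} (test with $\tilde\varphi^k$, use that $V'$ is bounded and that $\|\tilde\varphi^{k+1}-\tilde\varphi^k\|\le C\Delta t(\|\tilde\varphi^k\|+1)$), hence $\tilde u(\tilde\varphi_0,t_n)\le C$; since $g\ge 0$ and the running cost $\Delta t\sum_k\|\tilde\alpha^k\|^2/2$ is coercive in the control, the infimum is unchanged if restricted to controls with $\Delta t\sum_k\|\tilde\alpha^k\|^2\le C(X)$, and along these the states $\{\tilde\varphi^k\}$ remain in a fixed bounded subset of $V$ (discrete Gr\"onwall again), compare Theorem~\ref{thm:boundedcontrol}.

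Next, given $\tilde\varphi_0^1,\tilde\varphi_0^2\in X$ with midpoint $\tilde\varphi_0^3$, let $\{\tilde\alpha^k\}_{k=n}^{N-1}$ be an optimal (hence, by the above, bounded) control for $\tilde u(\tilde\varphi_0^3,t_n)$ and run \eqref{eq:timediscretephievol} from each $\tilde\varphi_0^j$ with this same control, obtaining $\{\tilde\varphi_j^k\}$, $j=1,2,3$. The running-cost contributions are identical for the three and cancel in $\tilde u(\tilde\varphi_0^1,t_n)+\tilde u(\tilde\varphi_0^2,t_n)-2\tilde u(\tilde\varphi_0^3,t_n)$, so it remains to bound $g(\tilde\varphi_1^N)+g(\tilde\varphi_2^N)-2g(\tilde\varphi_3^N)$. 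A preliminary estimate comes from subtracting \eqref{eq:timediscretephievol} for $j=1,2$, testing with $\tilde\varphi_1^k-\tilde\varphi_2^k$, using the identity $(w^{k+1}-w^k,w^k)=\frac{1}{2}(\|w^{k+1}\|^2-\|w^k\|^2-\|w^{k+1}-w^k\|^2)$, discarding the non-positive term $-\delta\Delta t\|\tilde\varphi_{1,x}^k-\tilde\varphi_{2,x}^k\|^2$, and using $|V'(\tilde\varphi_1^k)-V'(\tilde\varphi_2^k)|\le|V''|\,|\tilde\varphi_1^k-\tilde\varphi_2^k|$: a discrete Gr\"onwall inequality applied to $\|\tilde\varphi_1^k-\tilde\varphi_2^k\|^2$ then gives $\|\tilde\varphi_1^k-\tilde\varphi_2^k\|\le C\|\tilde\varphi_0^1-\tilde\varphi_0^2\|$ for all $k$.

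Now set $z^k=\tilde\varphi_1^k+\tilde\varphi_2^k-2\tilde\varphi_3^k$, so that $z^0=0$. Combining three copies of \eqref{eq:timediscretephievol}, the control cancels and the nonlinear term $-\delta^{-1}\Delta t\,(V'(\tilde\varphi_1^k)+V'(\tilde\varphi_2^k)-2V'(\tilde\varphi_3^k),v)$ splits, exactly as in \eqref{eq:Vsplit}, into a part controlled by $\frac{|V'''|}{2}\Delta t\,(|\tilde\varphi_1^k-\tilde\varphi_2^k|^2,|v|)$ and a part controlled by $|V''|\Delta t\,(|z^k|,|v|)$. Testing the recursion for $z$ with $z^k$, handling the forward-Euler remainders $\|w^{k+1}-w^k\|^2$ (for $w=\tilde\varphi_1-\tilde\varphi_2$ and $w=z$) as $O(\Delta t^2)$ times already-estimated quantities — legitimate since $\|\delta D^2\|$, with $D^2$ as in \eqref{eq:diffquotient}, is a finite $\Delta x$-dependent constant, so that no CFL restriction is needed — bounding $\|\tilde\varphi_1^k-\tilde\varphi_2^k\|_{L^4}^2\le C\|\tilde\varphi_1^k-\tilde\varphi_2^k\|^2$ by Sobolev (or by equivalence of norms on $V$) and inserting the preliminary estimate, a final discrete Gr\"onwall inequality applied to $\|z^k\|^2$ yields $\|z^N\|\le C\|\tilde\varphi_0^1-\tilde\varphi_0^2\|^2$.

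To finish, use the rearrangement already employed in \eqref{eq:terminalcosts},
\[
g(\tilde\varphi_1^N)+g(\tilde\varphi_2^N)-2g(\tilde\varphi_3^N)=\frac{K}{2}\|\tilde\varphi_1^N-\tilde\varphi_2^N\|^2+K\Big(\frac{\tilde\varphi_1^N+\tilde\varphi_2^N}{2}+\tilde\varphi_3^N-2\varphi_-,\,z^N\Big),
\]
together with the boundedness of $\tilde\varphi_1^N,\tilde\varphi_2^N,\tilde\varphi_3^N$ and the two estimates above, to obtain $g(\tilde\varphi_1^N)+g(\tilde\varphi_2^N)-2g(\tilde\varphi_3^N)\le C\|\tilde\varphi_0^1-\tilde\varphi_0^2\|^2$, i.e.\ semiconcavity of $\tilde u(\cdot,t_n)$ on the arbitrary bounded set $X$. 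I do not expect a genuine obstacle here: conceptually $\tilde\varphi_0\mapsto\tilde\varphi^N$ is a composition of finitely many smooth maps with first and second derivatives bounded on bounded sets (using that $V''$ and $V'''$ are bounded), so $\tilde\varphi_0\mapsto g(\tilde\varphi^N)$ is uniformly semiconcave over bounded controls and $\tilde u(\cdot,t_n)$ is an infimum of such functions. The only points requiring attention, and the only real novelty relative to Theorem~\ref{thm:semiconcave}, are the reduction to bounded controls and the forward-Euler remainder terms $\|w^{k+1}-w^k\|^2$, which are harmless precisely because $\Delta x$ and $\Delta t$ are fixed.
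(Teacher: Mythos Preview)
Your proposal is correct and follows the same skeleton as the paper's proof: take an optimal control for the midpoint, run all three trajectories with that control, note that the running costs cancel, split the nonlinear term exactly as in \eqref{eq:Vsplit}, obtain a recursive bound on $z^k=\tilde\varphi_1^k+\tilde\varphi_2^k-2\tilde\varphi_3^k$ starting from $z^0=0$, and finish with the identity \eqref{eq:terminalcosts}.

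The only notable difference is in the execution of the recursive estimate. You work in the variational $L^2$ setting, testing the recursion for $z$ with $z^k$ and then having to control the forward-Euler remainder $\|z^{k+1}-z^k\|^2$ as $O(\Delta t^2)$; this forces you to invoke the finite bound on $\|\delta D^2\|$. The paper instead passes to Euclidean coordinates $\xi^n_m\in\Re^{M-1}$ via \eqref{eq:coorexpr}--\eqref{eq:vectdefs}, writes the recursion as \eqref{eq:xistates}, and obtains directly $\|z^{n+1}\|_2\le C\|z^n\|_2+D\|\xi^n_1-\xi^n_2\|_2^2$ by the triangle inequality and the (now trivially finite, $\Delta x$-dependent) operator norms of $B^{-1}$ and $D^2$. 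This sidesteps the energy identity and the remainder terms entirely, and the $L^4$ bound on $\tilde\varphi_1^k-\tilde\varphi_2^k$ is replaced by the crude $\max_i|\xi^n_{1,i}-\xi^n_{2,i}|\le\|\xi^n_1-\xi^n_2\|_2$. Your preliminary reduction to bounded controls is also not needed in the paper's argument: boundedness of $\tilde\varphi_3^N$ follows from optimality (finite value), and then $\tilde\varphi_1^N+\tilde\varphi_2^N=2\tilde\varphi_3^N+z^N$ is automatically bounded. Both routes are valid; the paper's is a bit shorter because it exploits finite-dimensionality more bluntly.
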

\begin{proof}
Consider the starting positions $\tilde\varphi^0_1$, $\tilde\varphi^0_2$ and
$\frac{\tilde\varphi^0_1+\tilde\varphi^0_2}{2}$ at time $0$. The
time-discrete cost functional $\tilde v$ is introduced:
\begin{equation*}
\tilde v_{\tilde\varphi_0,t_m}(\{\tilde\alpha^n\}) = \big( g(\tilde\varphi_N)+ \Delta t \sum_{n=m}^{N-1} h(\tilde \alpha^n)\big),
\end{equation*}
where $\{\tilde\varphi^n\}$ solves \eqref{eq:timediscretephievol} and
$\tilde\varphi_m=\tilde\varphi_0$.
Let $\{\tilde\alpha^n\}$ be an optimal control for the starting
position
$(\frac{\tilde\varphi^0_1+\tilde\varphi^0_2}{2},0)$. We can thus write
\begin{multline*}
\tilde u(\tilde\varphi^0_1,0)+\tilde u(\tilde\varphi^0_2,0) - 2\tilde
u(\frac{\tilde\varphi^0_1+\tilde\varphi^0_2}{2},0) \\ 
\leq \tilde
v_{\tilde\varphi^0_1,0}(\{\tilde\alpha^n\}) + \tilde
v_{\tilde\varphi^0_2,0}(\{\tilde\alpha^n\}) -2 \tilde
v_{\frac{\tilde\varphi^0_1+\tilde\varphi^0_2}{2},0}(\{\tilde\alpha^n\}).
\end{multline*}
The states starting in $\tilde\varphi^0_1$, $\tilde\varphi^0_2$ and
$\frac{\tilde\varphi^0_1+\tilde\varphi^0_2}{2}$, all using the control
$\{\tilde\alpha^n\}$, are called
\begin{equation*}
\tilde\varphi^n_1 \equiv \sum_{i=1}^{M-1}\xi^n_{1,i} v^i,\  \tilde\varphi^n_2
\equiv \sum_{i=1}^{M-1}\xi^n_{2,i} v^i, \text{ and } \tilde\varphi^n_3 \equiv \sum_{i=1}^{M-1}\xi^n_{3,i} v^i.
\end{equation*}
Introducing the notation
\begin{equation*}
\xi^n_m= \begin{pmatrix} \xi^n_{m,1} \\ \vdots \\ \xi^{n}_{m,M-1}
\end{pmatrix}, \quad
p^n_m = \begin{pmatrix} \big(V'(\tilde\varphi_m^n),v^1\big) \\ \vdots \\
  \big(V'(\tilde\varphi_m^n),v^{M-1}\big)\end{pmatrix}, \quad
a^n = \begin{pmatrix} a^n_1 \\ \vdots \\ a^{n}_{M-1} \end{pmatrix},
\end{equation*}
where $m$ can be 1, 2, or 3 and
\begin{equation*}
\tilde\alpha^n \equiv \sum_{i=1}^{M-1}a^n_i v^i,
\end{equation*}
we can, using the mass matrix $B$ in \eqref{eq:massmatrix} and the second difference
operator $D^2$ in \eqref{eq:diffquotient}, write the equation for $\tilde \varphi^n_m$,
$m=1,2,3$, as follows:
\begin{equation}\label{eq:xistates}
B \xi^{n+1}_m = B \xi^n_m + \Delta t \big(\delta D^2 \xi^n_m -
\frac{\delta^{-1}}{\Delta x}p^n_m + B a^n\big).
\end{equation} 
Introducing the state $z^n = \xi^n_1 + \xi^n_2 - 2\xi^n_3$ and
using \eqref{eq:xistates} gives
\begin{equation}\label{eq:discrzevol}
B z^{n+1}=B z^n +\Delta t\big(\delta D^2 z^n -
\frac{\delta^{-1}}{\Delta x}(p^n_1+p^n_2-2p^n_3)\big).
\end{equation}
Every element in the vector 
\begin{equation*}
p^n_1+p^n_2-2p^n_3 = \begin{pmatrix} \big( V'(\tilde\varphi^n_1) +
  V'(\tilde\varphi^n_2) -2 V'(\tilde\varphi^n_3),v^1 \big) \\ \vdots \\ \big( V'(\tilde\varphi^n_1) +
  V'(\tilde\varphi^n_2) -2 V'(\tilde\varphi^n_3),v^{M-1} \big)
\end{pmatrix}
\end{equation*}
can be bounded in magnitude by
\begin{multline*}
||V'(\tilde\varphi^n_1)+V'(\tilde\varphi^n_2)-2
  V'(\tilde\varphi^n_3)|| \cdot ||v^i||\\
 = \sqrt{\frac{2}{3}\Delta x} ||V'(\tilde\varphi^n_1)+V'(\tilde\varphi^n_2)-2
  V'(\tilde\varphi^n_3)||\\
=\sqrt{\frac{2}{3}\Delta x} ||V'(\tilde\varphi^n_1)+V'(\tilde\varphi^n_2)-2
  V'(\frac{\tilde\varphi^n_1+\tilde\varphi^n_2}{2})|| \\+ 2\sqrt{\frac{2}{3}\Delta x}||
  V'(\frac{\tilde\varphi^n_1+\tilde\varphi^n_2}{2}) -
  V'(\tilde\varphi^n_3)|| =: I + II,
\end{multline*}
using the triangle inequality as in \eqref{eq:Vsplit}.  We first treat
term $I$ above:
\begin{multline*}
||V'(\tilde\varphi^n_1)+V'(\tilde\varphi^n_2)-2
  V'(\frac{\tilde\varphi^n_1+\tilde\varphi^n_2}{2})||^2 \leq
  \frac{|V'''|^2}{2^2} \int_0^1 |\tilde\varphi^n_1 -
  \tilde\varphi^n_2|^4 dx \\
\leq \frac{|V'''|^2}{2^2} \big( \max
  |\tilde\varphi^n_1 - \tilde\varphi^n_2| \big)^4 =
  \frac{|V'''|^2}{2^2} \big( \max |\xi^n_1 - \xi^n_2| \big)^4 \\
\leq
\frac{|V'''|^2}{2^2}  \big(\sum_{i=1}^{M-1} |\xi^n_{1,i} - \xi^n_{2,i}|^2 \big)^2 = \frac{|V'''|^2}{2^2}||
  \xi^n_1 - \xi^n_2 ||^4_2,
\end{multline*}
where $||\cdot||_2$ denotes the Euclidean vector norm.
Part $II$ may be bounded as follows:
\begin{equation*}
2||
  V'(\frac{\tilde\varphi^n_1+\tilde\varphi^n_2}{2}) -
  V'(\tilde\varphi^n_3)|| \leq A||\tilde\varphi^n_1 + \tilde\varphi^n_2
  - 2 \tilde\varphi^n_3|| \leq B ||z^n||_2.
\end{equation*}
These facts in \eqref{eq:discrzevol} give that 
\begin{equation}\label{eq:znorm}
||z^{n+1}||_2 \leq C||z^n||_2 + D ||\xi^n_1-\xi^n_2||^2_2.
\end{equation}
By subtracting the equations for $m=1$
and $m=2$ in \eqref{eq:xistates} we see that 
\begin{equation*}
||\xi^{n+1}_1-\xi^{n+1}_2||_2 \leq E ||\xi^{n}_1-\xi^{n}_2||_2 \leq
  \ldots \leq E^{n+1}||\xi^0_1-\xi^0_2||_2,
\end{equation*}
so that in \eqref{eq:znorm} we could really write
$D||\xi^0_1-\xi^0_2||^2_2$ instead of
$D||\xi^n_1-\xi^n_2||^2_2$. Thereby, since $z^0=0$, it holds that
$||z^N||_2 \leq F||\xi^0_1 - \xi^0_2||_2^2$. 
Note that the constants $A$ -- $F$ are allowed to depend on
$\Delta x$.
Similarly as in the proof
of Theorem \ref{thm:semiconcave}, semiconcavity  of $\tilde u$ is a
consequence of this.
\end{proof}
We are now ready for the promised theorem about the Symplectic Euler
method.
\begin{thm}\label{thm:sympleuler}
For every minimizer $\{\tilde\alpha^n\}$ in \eqref{eq:timediscretevalue} there exists a
solution to \eqref{eq:sympleuler} with $\tilde\lambda^{n+1}=-\tilde\alpha^n$ for $0
\leq n \leq N-1$.
\end{thm}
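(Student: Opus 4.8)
The plan is a discrete verification argument: construct the costate $\{\tilde\lambda^n\}$ from the spatial G\^ateaux derivative of the discrete value function $\tilde u$ evaluated along the optimal trajectory, and then observe that the relations in \eqref{eq:sympleuler} are exactly the resulting first-order optimality conditions. Since everything takes place in the finite element space $V$, which is finite dimensional, all derivatives below are ordinary derivatives on a Euclidean space and the standard facts about semiconcave functions apply without modification.

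First I would record the one-step dynamic programming principle for \eqref{eq:timediscretevalue}: for $0\le n\le N-1$ and $\psi\in V$,
\begin{equation*}
\tilde u(\psi,t_n)=\min_{\alpha\in V}\Big(\Delta t\,h(\alpha)+\tilde u\big(\Phi(\psi,\alpha),t_{n+1}\big)\Big),
\end{equation*}
where $\Phi(\psi,\alpha)\in V$ denotes the forward Euler step, i.e.\ the solution $\tilde\varphi^{n+1}$ of \eqref{eq:timediscretephievol} with $\tilde\varphi^n=\psi$ and $\tilde\alpha^n=\alpha$; for a minimizer $\{\tilde\alpha^n\}$ with state $\{\tilde\varphi^n\}$ each $\tilde\alpha^n$ attains this minimum at $\psi=\tilde\varphi^n$, with $\Phi(\tilde\varphi^n,\tilde\alpha^n)=\tilde\varphi^{n+1}$. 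Fixing $n$ and differentiating \eqref{eq:timediscretephievol} in $\tilde\alpha^n$ shows that $\alpha\mapsto\Phi(\tilde\varphi^n,\alpha)$ is affine with derivative $\Delta t\,\Id$, hence an affine bijection of $V$; therefore $\alpha\mapsto\tilde u\big(\Phi(\tilde\varphi^n,\alpha),t_{n+1}\big)$ is semiconcave by Lemma~\ref{lem:discretesemiconcavity}, while $\alpha\mapsto\Delta t\,h(\alpha)$ is smooth. A sum of a smooth function and a semiconcave function that attains a minimum at $\tilde\alpha^n$ must have the semiconcave summand differentiable there, with a one-point superdifferential, by the standard characterization of differentiability of semiconcave functions (see \cite{Cannarsa-Sinestrari}, \cite{Cannarsa}). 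Via the affine bijection this forces $\tilde u(\cdot,t_{n+1})$ to be differentiable at $\tilde\varphi^{n+1}$, and the vanishing of the first variation, after cancelling $\Delta t$, reads $\big(\tilde\alpha^n+D_\varphi\tilde u(\tilde\varphi^{n+1},t_{n+1}),v\big)=0$ for all $v\in V$. So I would set $\tilde\lambda^{n+1}:=-\tilde\alpha^n=D_\varphi\tilde u(\tilde\varphi^{n+1},t_{n+1})$ for $0\le n\le N-1$. Taking $n=N-1$ and using $\tilde u(\cdot,t_N)=g$ gives $\tilde\lambda^N=g'(\tilde\varphi^N)=2K(\tilde\varphi^N-P\varphi_-)$, the terminal condition in \eqref{eq:sympleuler}; and since $\tilde\alpha^n=-\tilde\lambda^{n+1}$, the first (state) equation of \eqref{eq:sympleuler} is literally \eqref{eq:timediscretephievol}, so nothing further is needed there.

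It remains to verify the costate equation $(\tilde\lambda^n,v)=(\tilde\lambda^{n+1},v)+\Delta t\,H_\varphi(\tilde\lambda^{n+1},\tilde\varphi^n;v)$ for $1\le n\le N-1$ (for $n=0$ one simply defines $\tilde\lambda^0$ by this formula). I would use an envelope argument. Freezing the control $\tilde\alpha^n$ in the dynamic programming identity gives $\tilde u(\psi,t_n)\le\Delta t\,h(\tilde\alpha^n)+\tilde u\big(\Phi(\psi,\tilde\alpha^n),t_{n+1}\big)$ for all $\psi$, with equality at $\psi=\tilde\varphi^n$. Since $\tilde u(\cdot,t_{n+1})$ is differentiable at $\tilde\varphi^{n+1}$ (previous paragraph) and $\psi\mapsto\Phi(\psi,\tilde\alpha^n)$ is smooth, the right-hand side is differentiable at $\psi=\tilde\varphi^n$, so its gradient belongs to $D^+\tilde u(\tilde\varphi^n,t_n)$; and since $\tilde u(\cdot,t_n)$ is differentiable at $\tilde\varphi^n$ (the previous paragraph applied with index $n-1$), one has $D^+\tilde u(\tilde\varphi^n,t_n)=\{\tilde\lambda^n\}$. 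Differentiating \eqref{eq:timediscretephievol} in $\tilde\varphi^n$ shows that $\psi\mapsto\Phi(\psi,\tilde\alpha^n)$ has derivative $\Id+\Delta t\,L$ at $\tilde\varphi^n$, where $L$ is the self-adjoint operator on $V$ determined by $(L\eta,w)=-\delta(\eta_x,w_x)-\delta^{-1}\big(V''(\tilde\varphi^n)\eta,w\big)=H_\varphi(\eta,\tilde\varphi^n;w)$; by the chain rule and self-adjointness the gradient of the right-hand side at $\tilde\varphi^n$ equals $(\Id+\Delta t\,L)\tilde\lambda^{n+1}$, and equating it with $\tilde\lambda^n$ gives precisely the asserted costate equation.

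The step I expect to be the main obstacle is this last one: one must obtain, at every time level and without circular reasoning, the differentiability of $\tilde u(\cdot,t_n)$ at the optimal node $\tilde\varphi^n$ (this is where Lemma~\ref{lem:discretesemiconcavity} is indispensable, since Lipschitz continuity alone would not give pointwise differentiability), and then identify the envelope gradient of the value function with the explicit Symplectic Euler increment; the latter rests on the self-adjointness of the linearised flow $\Id+\Delta t\,L$, which comes from the symmetry of the bilinear form defining $H_\varphi$ and is the algebraic core of the statement.
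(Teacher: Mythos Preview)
Your proposal is correct and follows essentially the same three-ingredient strategy as the paper: use semiconcavity of $\tilde u(\cdot,t_n)$ to force differentiability along the optimal trajectory, read off $\tilde\lambda^{n+1}=-\tilde\alpha^n$ from the first-order condition in $\alpha$, and obtain the costate recursion from the chain rule applied to the one-step map. The only differences are cosmetic: you argue differentiability via ``minimum of smooth plus semiconcave forces $D^-\neq\emptyset$, hence the superdifferential is a singleton,'' while the paper argues the contrapositive (more than one element in $D^+$ would produce a strictly better control); and you make the self-adjointness of the linearised step $\Id+\Delta t\,L$ explicit, whereas the paper leaves it implicit in writing $s'(\tilde\varphi^n)=\tilde u_\varphi(\tilde\varphi^{n+1},t_{n+1})\circ(I+\Delta t\,\bar f'(\tilde\varphi^n))$ and identifying this with the $\tilde\lambda$ equation.
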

\begin{proof}
The proof is divided into three steps. In the first step it is shown
that the value function $\tilde u$ is differentiable along the optimal
path $\tilde\varphi^n$. In the second step it is proved that the dual
variable $\tilde\lambda^n$ equals the G\^ateaux derivative of $\tilde u$, and in
the last step it is shown that $\tilde\lambda^{n+1}=\tilde\alpha^n$.

\emph{Step 1.} In order to show that the discrete value function
$\tilde u$ is differentiable at $(\tilde\varphi^n,t_n)$ for $0 < n \leq
N$ the function
\begin{equation}\label{eq:r}
r(\tilde\alpha) \equiv \tilde u(\tilde\varphi,t_{n+1}) + \Delta t h(\tilde\alpha)
\end{equation}
is introduced, where $\tilde\varphi=\tilde\varphi^n + \Delta t \bar
f(\tilde\varphi^n,\tilde\alpha)$ and $\bar f$ is given by \eqref{eq:fbar}. 
Assume that $\tilde u$ is not differentiable at
$(\tilde\varphi^{n+1},t_{n+1})$. Because $\tilde u$ is semiconcave it then
follows that the superdifferential $D^+ \tilde
u(\tilde\varphi^{n+1},t_{n+1})$ (which we let designate the
superdifferentials in the G\^ateaux sense) contains more than one
point.  For all $\tilde \alpha$ in a neighborhood of $\tilde \alpha^n$
it holds that
\begin{equation}\label{eq:rdiff}
\begin{split}
&r(\tilde\alpha)-r(\tilde\alpha^n) \\
 =& \tilde u \big(\tilde\varphi^n + \Delta
t \bar f(\tilde\varphi^n,\tilde\alpha)\big) - \tilde u \big(\tilde\varphi^n + \Delta
t \bar f(\tilde\varphi^n,\tilde\alpha^n)\big) +\Delta t
\big(h(\tilde\alpha)-h(\tilde\alpha^n)\big) \\
 \leq  &\Delta t \big(p, \bar f(\tilde\varphi^n,\tilde\alpha)- \bar
f(\tilde\varphi^n,\tilde\alpha^n)\big) + \Delta t
\big(h'(\tilde\alpha^n),\tilde\alpha - \tilde\alpha^n\big) + K||\tilde\alpha -
\tilde\alpha^n||^2 \\
 = &\Delta t (\tilde p, \tilde\alpha- \tilde\alpha^n) + \Delta t
\big(h'(\tilde\alpha^n),\tilde\alpha - \tilde\alpha^n\big) + K||\tilde\alpha -
\tilde\alpha^n||^2,
\end{split} 
\end{equation}
where $p$ is an element in $D^+ \tilde
u(\tilde\varphi^{n+1},t_{n+1})$ and $\tilde p$ is given by a linear
bijection of $s$, since $\bar f$ is linear in the $\alpha$
variable. Since there are more than one element $p \in D^+ \tilde
u(\tilde\varphi^{n+1},t_{n+1})$, there are also more than one possible $\tilde p$
in equation \eqref{eq:rdiff}. 
It is therefore possible to choose the element $\tilde p$ such that
the linear term in \eqref{eq:rdiff} is non-vanishing.
It follows that there exists $\tilde
\alpha$ such that $r(\tilde\alpha) < r(\tilde\alpha^n)$, which is the
sought contradiction. By this reasoning we see that $\tilde u$ is
differentiable at $(\tilde\varphi^n,t_n)$ for $0<n \leq N$. 

\emph{Step 2.} It follows directly that
$\tilde\lambda^N=g'(\tilde\varphi^N)$, i.e.\ the G\^ateaux derivative of
$\tilde u(\cdot,t_N)$. Assume that $\tilde\lambda^{n+1} = \tilde
u_\varphi(\tilde\varphi^{n+1},t_{n+1})$. It will follow from this that
$\tilde\lambda^n =\tilde u_\varphi(\tilde\varphi^n,t_n)$. Since it is known
that $\tilde u$ is differentiable at both $(\tilde\varphi^n,t_n)$ and
$(\tilde\varphi^{n+1},t_{n+1})$ the G\^ateaux derivative of $\tilde u$ at
$(\tilde\varphi^{n+1},t_{n+1})$ equals the G\^ateaux derivative at
$\tilde \varphi^{n}$ of the function
\begin{equation*}
s(\tilde\varphi) \equiv \tilde  u (\tilde\varphi + \Delta t \bar
f(\tilde\varphi,\tilde\alpha^n),t_n) + \Delta t h(\tilde\alpha^n),
\end{equation*}
where $\tilde\alpha^n$ is fixed. The G\^ateaux derivative of $s$ at $\tilde\varphi^n$ is
given by 
\begin{equation*}
s'(\tilde\varphi^n) = \tilde u_\varphi(\tilde\varphi^{n+1},t_{n+1}) \circ (I + \Delta t
\bar f'(\tilde\varphi^n)),
\end{equation*}
where $\tilde u_\varphi(\tilde\varphi^{n+1},t_{n+1}) =
\tilde\lambda^{n+1}$ is a function from $V$ to $\Re$ and $\bar
f'(\tilde\varphi^n)$ is a function from $V$ to $V$. This equation
coincides with the $\tilde\lambda$ equation in \eqref{eq:sympleuler},
which gives that $\tilde\lambda^n=\tilde u_\varphi(\tilde\varphi^n,t_n)$.
By induction in $n$   it follows that $\tilde\lambda^n=\tilde
u_\varphi(\tilde\varphi^n,t_n)$ for $0 < n \leq N$.

\emph{Step 3.} Knowing that $\tilde u$ is differentiable at $(\tilde
\varphi^n,t_n)$ for $0 < n \leq N$ the function \eqref{eq:r} can be
differentiated. Since $\tilde\alpha^n$ is a minimizer of $r$ the
derivative at this argument must be zero:
\begin{equation*}
r'(\tilde\alpha^n)=\tilde\lambda^{n+1} \Delta t \circ I +\Delta t \tilde\alpha^n=0,
\end{equation*}
where it is used that $\tilde
u_\varphi(\tilde\varphi^{n+1},t_{n+1})=\tilde\lambda^{n+1}$, $\bar f_\alpha
= I$ and $h'(\tilde\alpha^n)=\tilde\alpha^n$. It follows that
$\tilde\lambda^{n+1} = -\tilde\alpha^n$ for $0 \leq n \leq N-1$.
\end{proof}
\section{Numerical Results}\label{sec:NumRes}
We here present some numerical results for the 
Symplectic Euler scheme for a finite difference
discretization of \eqref{eq:flow}, \eqref{eq:valuefunction}.
The numerics is performed in this setting, partly because it is
slightly simpler than using finite elements, partly because a finite
difference discretization is used in \cite{Weinan-Ren-Vanden-Eijnden}.
The system we will consider is therefore 
\begin{equation}\label{eq:systxieta}
\begin{split}
\xi^{n+1} &= \xi^n + \Delta t \big(\delta D^2 \xi^n - \delta^{-1}
V'(\xi^n) -\eta^{n+1}\big), \\
\eta^n &= \eta^{n+1} + \Delta t \big(\delta D^2 \eta^{n+1} -
\delta^{-1}\eta^{n+1}V'(\xi^n)\big), \\
\eta^N &= 2K(\xi^N-\xi^-),
\end{split}
\end{equation}
where $\xi^-$ is a finite difference approximation to $\varphi_-$, $D^2$ is defined in \eqref{eq:diffquotient} and $\xi^n$ and
$\eta^n$ correspond to the nodal values of $\bar\varphi^n$ and
$\bar\lambda^n$, respectively. The approximate value used together
with this scheme is
\begin{equation}\label{eq:FDvalue}
K \Delta x ||\xi^N-\xi^-||_2^2 + \Delta t\Delta x \sum_{n=1}^{n=N}
||\eta^n||_2^2 /2,
\end{equation}
where $||\cdot ||_2$ denotes the ordinary Euclidean vector norm.
As noted in \cite{Weinan-Ren-Vanden-Eijnden} there are several local
minima to \eqref{eq:FDvalue}, corresponding to different ``strategies'' to
overcome the potential barrier $V$. The switching between the two
stable points proceeds by ``nucleation'', which involves a large
control $\alpha$, followed by propagation of domain walls. In Figure
\ref{fig:phi2walls1wallBETAmilliardR6percentM200N2006times}  the transition is shown for the cases propagation of one
and two domain walls. 
\begin{figure}
\centering
\includegraphics[width=0.8\textwidth,height=5cm]{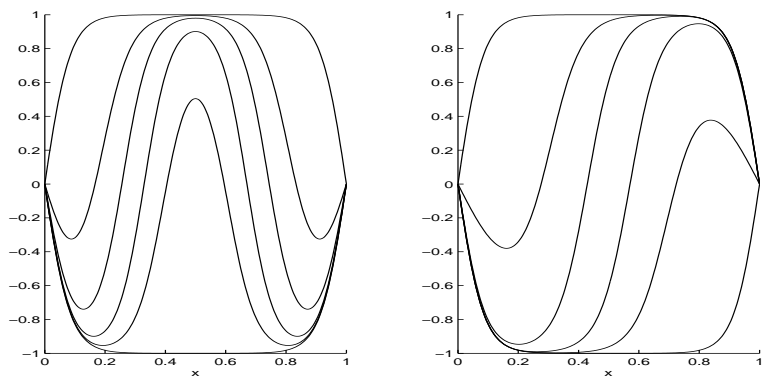}
\caption{Snapshots of transitions between the two stable
  configurations where $\varphi$ is shown at times 0, 0.2, 0.4, 0.6, 0.8 and 1 ($=T$).   To the right propagation of one wall and to the
  left propagation of two walls. In these examples $K=10^9$ and
  $\delta=0.06$ was used.}
\label{fig:phi2walls1wallBETAmilliardR6percentM200N2006times}
\end{figure}
The $\lambda$ variable, which equals the negative control, is shown in
Figure \ref{fig:lambda2wallsBETAmilliardR6percentM200N200} for the case of propagation of two walls.
\begin{figure}
\centering
\includegraphics[width=\textwidth]{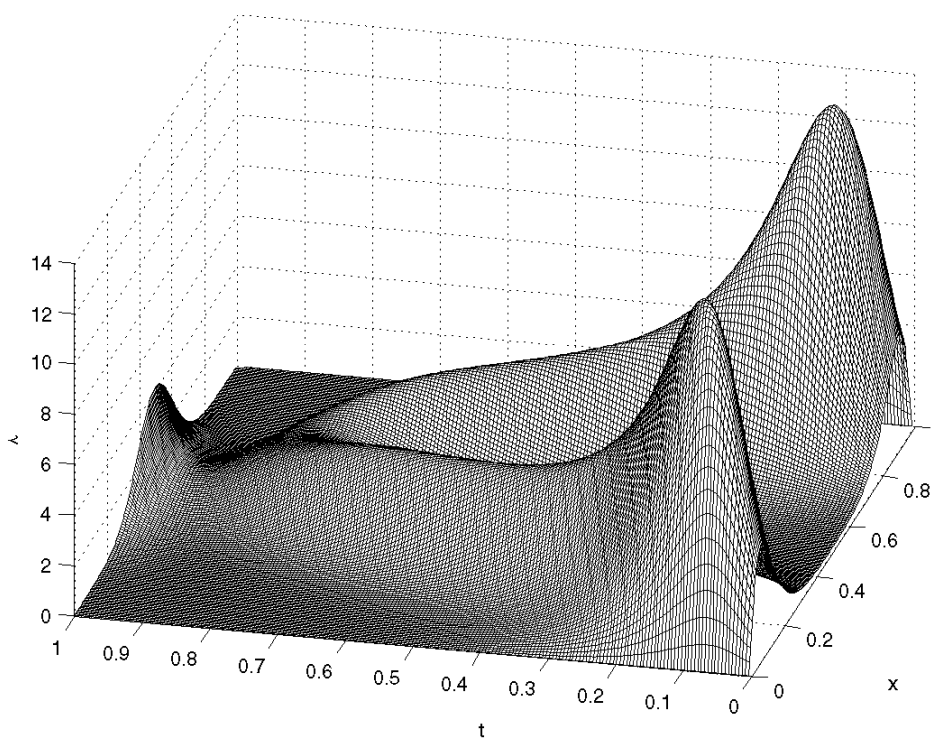}
\caption{The dual variable $\lambda$ for the case of two propagating
  walls corresponding to the left part of Figure \ref{fig:phi2walls1wallBETAmilliardR6percentM200N2006times}.}
\label{fig:lambda2wallsBETAmilliardR6percentM200N200}
\end{figure}

Apart from the Symplectic Forward Euler method previously mentioned,
the Symplectic Backward Euler method can also be used. This method is
given by
\begin{align*}
\xi^{n+1} &= \xi^n + \Delta t (\delta D^2 \xi^{n+1} - \delta^{-1}
V'(\xi^{n+1}) -\eta^{n}), \\
\eta^n &= \eta^{n+1} + \Delta t (\delta D^2 \eta^{n} -
\delta^{-1}\eta^{n}V'(\xi^{n+1})), \\
\eta^N &= 2K(\xi^N-\xi^-).
\end{align*} 
The approximate value for the Symplectic Backward Euler method is
given by (see Chapter 4.4 in \cite{Sandberg-Szepessy})
\begin{equation}\label{eq:FDBEvalue}
K \Delta x ||\xi^N-\xi^-||_2^2 + \Delta t\Delta x \sum_{n=0}^{n=N-1}
||\eta^n||_2^2 /2.
\end{equation}
An advantage with the Backward Euler method is that it enables using a
small $\Delta x$ even when $\Delta t$ is not small. This feature is
however not as profound for the present case of control of a
parabolic equation as for the uncontrolled case, as the control
compensates for the instability, which makes it possible to use smaller
$\Delta x$. Another good thing about the Backward Euler method is
that it seems to underestimate the optimal value while it seems to be
overestimated by the Forward Euler method.  Figure
\ref{fig:valueconvFEBEdxequalsdt} shows the dependence on $\Delta
x=\Delta t$ of the values \eqref{eq:FDvalue} and \eqref{eq:FDBEvalue}.
\begin{figure}
\centering
\includegraphics[width=\textwidth,height=5cm]{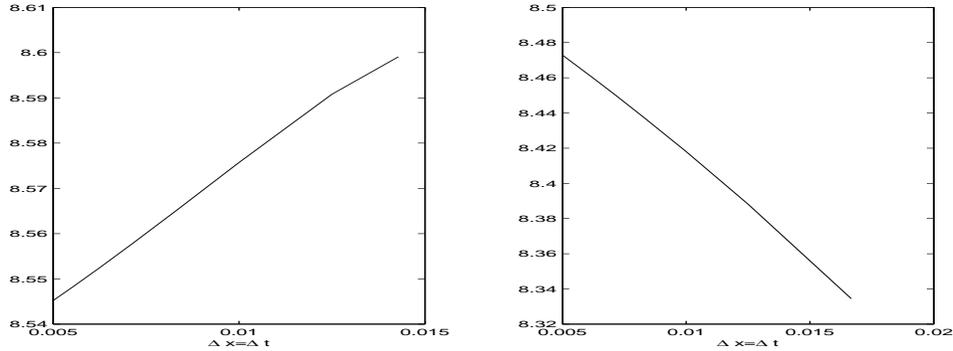}
\caption{Convergence of the optimal values \eqref{eq:FDvalue} and
  \eqref{eq:FDBEvalue} for the case of equal spacing in space and
  time, i.e.\ $\Delta x=\Delta t$. The left figure shows the values
  obtained by the Forward Euler method and the right shows the values
  of the Backward
  Euler method.}
\label{fig:valueconvFEBEdxequalsdt}
\end{figure}
By extrapolating these fairly straight curves to $\Delta x=\Delta t=0$
an approximate value of the optimal control problem is obtained. The
extrapolated value from the Forward Euler curve is 8.517, and the
approximate value from the Backward Euler curve is 8.526. 

We now indicate the dependence of the spatial discretization error on
$\Delta x$. This is done by changing the spatial discretization
$\Delta x$ while keeping the time discretization $\Delta t$
constant. We let the value
obtained for the smallest spatial discretization $\Delta x$ be the
reference value which takes the role of an ``exact'' solution. A convergence plot can be found in Figure
\ref{fig:dxconvergence2wallsBETAmilliardR3percentN200}. The slope of
the upper part of this curve corresponds to a convergence rate of
approximately $(\Delta x)^{2.37}$.
\begin{figure}
\centering
\includegraphics[width=0.5\textwidth]{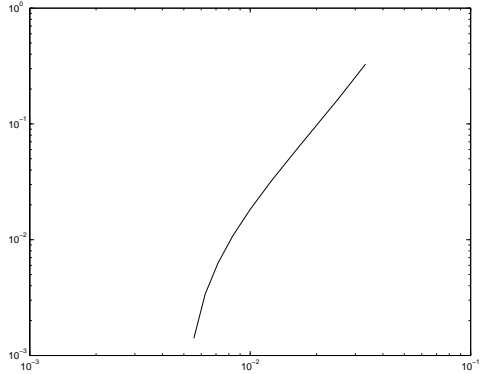}
\caption{Convergence of the optimal value (y-axis) with respect to
  $\Delta x$ (x-axis). The case with two propagating domain walls and
  $\Delta t=1/200$, $\delta=0.03$ and $K=10^9$.}
\label{fig:dxconvergence2wallsBETAmilliardR3percentN200}
\end{figure}

For the time discretization error we want to show that it is less than
a linear function of $\Delta t$ with a constant which does not depend
on $\Delta x$.  Time discretization convergence is therefore
considered for two spatial discretizations, one having $\Delta x=1/30$
and the other $\Delta x =1/100$. Since the Forward and Backward
Euler methods in the limit $\Delta t \rightarrow 0$ shall have the
same value we may extrapolate the values from diagrams similar to the
ones in Figure \ref{fig:valueconvFEBEdxequalsdt}, but with the
exception that $\Delta x$ is held fixed. The following  values are obtained from
these extrapolations in the case of two propagating domain walls,
using $\delta=0.03$ and $K=10^9$: 
\begin{itemize}
\item Forward Euler, $\Delta x=1/30$:\quad 8.841 
\item Forward Euler, $\Delta x=1/100$:\quad 8.547
\item Backward Euler, $\Delta x=1/30$:\quad 8.849
\item Backward Euler, $\Delta x=1/100$:\quad 8.555   
\end{itemize}
The mean of the above values for Forward and Backward Euler can be
taken as an ``exact'' reference value when convergence is
studied. Hence for
$\Delta x=1/30$ the reference value is taken to be 8.845 and for
$\Delta x=1/100$ it is taken to be 8.551. The two convergence plots
can be found in Figure
\ref{fig:dtconvergence2wallsBETAmilliardR3percentM30M100}. Note that
the inclination in the left curve, the values using $\Delta x=1/30$, is
larger than the inclination in the right curve ($\Delta
x=1/100$). This is in harmony with Theorem \ref{thm:timediscrconv} since
it is allowed that (and good if) we have faster convergence for smaller $\Delta x$.  
\begin{figure}
\centering
\includegraphics[width=\textwidth,height=5cm]{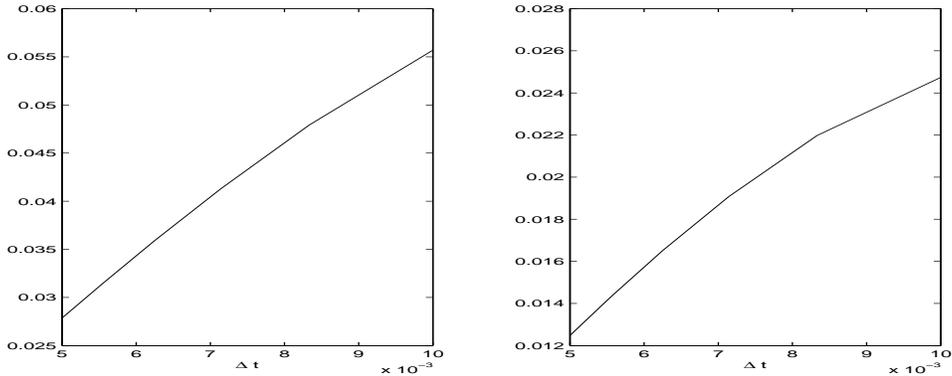}
\caption{Time discretization convergence for two different spatial
  discretizations, $\Delta x=1/30$ (left), and $\Delta x=1/100$
  (right).
The case with two propagating domain walls and
  $\delta=0.03$ and $K=10^9$.}
\label{fig:dtconvergence2wallsBETAmilliardR3percentM30M100}
\end{figure}

The system \eqref{eq:systxieta} can be (and has been) solved in two
steps. The first step gives a starting position for the second step,
and may be performed on a coarse grid, i.e.\ using large $\Delta x$
and $\Delta t$. The method is to choose an initial guess $\xi_0$ (a
vector containing all time steps) and with it compute the dual,
$\eta_0$, using \eqref{eq:systxieta}. This computed $\eta_0$ is used
in \eqref{eq:systxieta} to compute $\xi_{upd}$, an updated
$\xi$. Using a damping $\nu$, a new state $\xi_1=\nu \xi_0 +
(1-\nu)\xi_{upd}$ is computed which is used to obtain the dual
$\eta_1$, which in turn is used to compute a new $\xi_{upd}$, and so
on. When the difference $\xi_{upd}-\xi_n$ is sufficiently small the
iterations are terminated, and a starting point $(\xi,\eta)$ is
obtained for the second step.

Step two consists of Newton iterations of \eqref{eq:systxieta}. Since
the sparse Jacobian can be computed explicitly this second step
converges at a quadratic rate, making it computationally cheap to
reach  an accurate solution. In the examples presented in this chapter the
Newton iterations continued until the difference between two
consecutive $\xi$:s and $\eta$:s was less than $10^{-13}$ in each
space-time component. After convergence has been reached for some
discretization, a space-time interpolation of $\xi$ and $\eta$ can be
used as a starting position for a Newton iteration on a new grid. It
is also possible to gradually change the parameters $\delta$ and $K$
in the Newton iterations in order to be able to treat a favorite
case. When the starting point is sufficiently good the Newton
method terminates after 5-7 iteration steps, making it fast. As
comparison, when in \cite{Weinan-Ren-Vanden-Eijnden} a limited memory
BFGS method is used, about 550 iterations is needed to decrease the
$L_2$-norm of the objective gradient to $10^{-10}$, even when a clever
approximation of the initial Hessian was used.
\section{Acknowledgments}
I would like to thank Anders Szepessy and Erik von Schwerin for
proofreading this article and suggesting improvements. 
\section{Appendix}\label{sec:appendix}
In order to show existence and uniqueness of solutions to
(\ref{eq:flow}) we introduce the notion of \emph{weak} solutions (see
\cite{Evans}). We will let $\langle \cdot , \cdot \rangle$ denote the
pairing between $H^{-1}$ and $H^1_0$.
\begin{definition}\label{def:weak}
We say a function
\begin{equation*}
\varphi \in L^2(0,T;H_0^1(0,1)), \text{ with } \varphi_t \in L^2(0,T;H^{-1}(0,1)),
\end{equation*}
is a \emph{weak solution} of (\ref{eq:flow}) with $\varphi_0\in L^2(0,1)$ provided
\begin{align*}
&\langle \varphi_t, v \rangle + \delta(\varphi_x, v_x) = (-\delta^{-1}V'(\varphi)+\alpha,v) \\
\intertext{for each $v\in H_0^1(0,1)$ and a.e.\ time $t_0 \leq t \leq
  T$, and}
&\varphi(t_0)=\varphi_0.
\end{align*}
\end{definition} 
Weak solutions are in fact more regular than is required in the
definition when the initial state $\varphi_0\in H_0^1(0,1)$, which is
used when proving the following theorem.
\begin{thm}
There exists a unique weak solution $\varphi$ to (\ref{eq:flow}) in \linebreak
$C([t_0,T];H_0^1)$ when $\varphi_0 \in H^1_0(0,1)$. 
This solution satisfies
\begin{equation}\label{eq:phibound1}
||\varphi_x(t)||^2_{L^2} 
\leq
||(\varphi_0)_x||^2_{L^2}+\frac{\delta^{-2}}{2} ||\varphi_0||^4_{L^4}-
\delta^{-2} ||\varphi_0||^2_{L^2} + \delta^{-1} ||\alpha||^2_{L^2(0,T;L^2)} +\frac{\delta^{-1}}{2},
\end{equation}
for all $t \in [t_0,T]$.
\end{thm}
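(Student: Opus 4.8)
The plan is to build the solution by a Galerkin approximation in the eigenbasis of $A=-\delta\,d^2/dx^2$, to obtain \eqref{eq:phibound1} first as an a priori estimate on the approximations, and then to pass to the limit; uniqueness will follow from an energy estimate on the difference of two solutions. First I would fix the eigenfunctions $\psi_k(x)=\sqrt2\sin(k\pi x)$ and look for $\varphi_m(t)=\sum_{k=1}^m d^k_m(t)\psi_k$ solving $(\varphi_{m,t},v)+\delta(\varphi_{m,x},v_x)=(-\delta^{-1}V'(\varphi_m)+\alpha,v)$ for every $v$ in the span of $\psi_1,\dots,\psi_m$, with $\varphi_m(t_0)=P_m\varphi_0$. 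The right-hand side is a polynomial, hence locally Lipschitz, function of the coefficients $d^1_m,\dots,d^m_m$, so Picard's theorem provides a local-in-time smooth solution; the a priori bound obtained below keeps $\norm{\varphi_m(t)}_{H_0^1}$, and hence the coefficients, bounded on $[t_0,T]$, so that $\varphi_m$ in fact exists on all of $[t_0,T]$.

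The crucial step is the energy estimate. The idea is to test the Galerkin equation with $v=\varphi_{m,t}$, which is permitted since $\varphi_{m,t}$ lies in the Galerkin space; this yields $\norm{\varphi_{m,t}}^2+\frac{d}{dt}E(\varphi_m)=(\alpha,\varphi_{m,t})$ with $E(\varphi)=\frac{\delta}{2}\norm{\varphi_x}^2+\delta^{-1}\int_0^1 V(\varphi)\,dx$, and after the estimate $(\alpha,\varphi_{m,t})\le\half\norm\alpha^2+\half\norm{\varphi_{m,t}}^2$ one is left with $\half\norm{\varphi_{m,t}}^2+\frac{d}{dt}E(\varphi_m)\le\half\norm\alpha^2$. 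Integrating over $[t_0,t]$, discarding the nonnegative term $\delta^{-1}\int_0^1 V(\varphi_m(t))\,dx$, and expanding $V(\varphi)=\quart(\varphi^2-1)^2$, this produces \eqref{eq:phibound1} for $\varphi_m$ up to the error $E(P_m\varphi_0)-E(\varphi_0)$, which tends to zero because $P_m\varphi_0\to\varphi_0$ in $H_0^1$ and $E$ is continuous on $H_0^1$; in particular it gives uniform bounds for $\varphi_m$ in $L^\infty(t_0,T;H_0^1)$ and for $\varphi_{m,t}$ in $L^2(t_0,T;L^2)$.

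With those bounds I would then extract, by an Aubin--Lions compactness argument using the compact embedding $H_0^1\hookrightarrow\hookrightarrow L^2$, a subsequence that converges strongly in $C([t_0,T];L^2)$, weak-$*$ in $L^\infty(t_0,T;H_0^1)$, and whose time derivatives converge weakly in $L^2(t_0,T;L^2)$, to a limit $\varphi$. Since $H_0^1(0,1)\hookrightarrow C[0,1]$ in one space dimension, the $\varphi_m$ are uniformly bounded in $L^\infty([t_0,T]\times[0,1])$, so $\norm{V'(\varphi_m)-V'(\varphi)}_{L^2}\le C\norm{\varphi_m-\varphi}_{L^2}\to0$ uniformly in $t$; passing to the limit against each $\psi_k$ then shows that $\varphi$ is a weak solution in the sense of Definition~\ref{def:weak} with $\varphi(t_0)=\varphi_0$, and \eqref{eq:phibound1} is inherited by $\varphi$ through weak lower semicontinuity of $\norm{\,\cdot\,}_{H_0^1}$ and the continuity of $E$. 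To upgrade the time regularity to $\varphi\in C([t_0,T];H_0^1)$, I would read the equation as $\delta\varphi_{xx}=\varphi_t+\delta^{-1}V'(\varphi)-\alpha\in L^2(t_0,T;L^2)$, so that $\varphi\in L^2(t_0,T;H^2\cap H_0^1)$ with $\varphi_t\in L^2(t_0,T;L^2)$, and then invoke the standard fact that such a function has a representative in $C([t_0,T];H_0^1)$ (see e.g.\ \cite{Evans}).

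For uniqueness, let $\varphi^1,\varphi^2$ be two weak solutions in $C([t_0,T];H_0^1)$ with the same data and put $w=\varphi^1-\varphi^2$. Testing the difference of the equations with $w$ gives $\half\frac{d}{dt}\norm w^2+\delta\norm{w_x}^2=-\delta^{-1}\big(V'(\varphi^1)-V'(\varphi^2),w\big)$; writing $V'(\varphi^1)-V'(\varphi^2)=w\big((\varphi^1)^2+\varphi^1\varphi^2+(\varphi^2)^2-1\big)$ and using that $\varphi^1,\varphi^2$ are bounded in $L^\infty([t_0,T]\times[0,1])$ (as $C([t_0,T];H_0^1)\subset C([t_0,T]\times[0,1])$) bounds the right-hand side by $C\norm w^2$, so Gr\"onwall's lemma and $w(t_0)=0$ force $w\equiv0$. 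The hard part will be the energy estimate and its passage to the limit: one must test at the discrete level with $\varphi_{m,t}$ to bring out the gradient-flow structure and absorb the forcing $\alpha$ by completing the square, and then carry the constants coming from $V(\varphi)=\quart(\varphi^2-1)^2$ through the limit so as to land exactly on \eqref{eq:phibound1}. The cubic nonlinearity, though not globally Lipschitz, causes no real difficulty, since in one dimension the $H^1$ bound immediately upgrades to the $L^\infty$ control of $V'$ (and of $V''$) needed both in the compactness step and in the uniqueness argument.
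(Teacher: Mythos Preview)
Your argument is correct and complete: the Galerkin scheme, the energy identity obtained by testing with $\varphi_{m,t}$, the Aubin--Lions/Arzel\`a--Ascoli compactness, the passage to the limit, the elliptic upgrade to $L^2(t_0,T;H^2)$ giving $C([t_0,T];H_0^1)$, and the Gr\"onwall uniqueness all go through, and the one-dimensional embedding $H_0^1\hookrightarrow L^\infty$ is exactly what makes the cubic nonlinearity harmless at every step.

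The paper, however, proceeds differently. Instead of Galerkin, it first replaces the quartic potential $V$ by the truncated potential $\tilde V$ of Figure~\ref{fig:V}, so that $\tilde V'$ and $\tilde V''$ are globally bounded; existence and uniqueness for the modified equation are then obtained by a contraction-mapping argument on $L^\infty(t_0,T;H_0^1)$, freezing the nonlinearity and invoking the linear parabolic theory (with references to \cite{Evans}) for the solution map. Only after the solution is in hand does the paper test with $\varphi_t$---the same energy identity you use---to show that $\norm{\varphi_x(t)}$, and hence $\norm{\varphi(t)}_{L^\infty}$, stays below a threshold depending only on the data; this a posteriori bound guarantees that the solution never sees the region where $V$ and $\tilde V$ differ, so it solves the original equation as well. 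Your approach is more self-contained and arguably cleaner, deriving the a priori bound before existence and using it to globalize the Galerkin ODEs; the paper's approach is shorter because it offloads the linear theory to \cite{Evans}, and it has the side benefit (used elsewhere in the paper) of explicitly justifying the replacement $V\leadsto\tilde V$. Both routes hinge on the identical energy computation, so the constants in \eqref{eq:phibound1} come out the same way.
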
 
\begin{proof}
We start by proving existence and uniqueness of solutions to the equation
(\ref{eq:flow}) when the potential $\tilde V$ is used; see figure
\ref{fig:V}. Let $\breve\varphi \in L^\infty (t_0,T;H_0^1)$ and $\breve\varphi(t_0)=\varphi_0$ and
define $\tilde \varphi$ by 
\begin{equation*}
\tilde\varphi_t=\delta\tilde\varphi_{xx} - \delta^{-1}\tilde V'(\breve\varphi) + \alpha, \quad \tilde\varphi(t_0)=\varphi_0.
\end{equation*}
The solution then satisfies $\tilde\varphi \in L^\infty (t_0,T;H_0^1)$, so
we can define a map
\begin{align*}
A: & L^\infty (t_0,T;H_0^1) \rightarrow L^\infty (t_0,T;H_0^1) \\
& \breve\varphi \mapsto \tilde\varphi
\end{align*} 
which is single valued (see \cite{Evans}). It is now shown that $A$ is a
contraction on $L^\infty (t_0,T;H_0^1)$ if $T$ is small enough. Let
$\tilde \varphi=A(\breve\varphi)$ and $\tilde \psi= A(\breve\psi)$. Subtracting the
equations for $\tilde \varphi$ and $\tilde \psi$ gives
\begin{equation*}
(\tilde \varphi - \tilde \psi)_t=\delta(\tilde \varphi - \tilde \psi)_{xx}
  -\delta^{-1}(\tilde V'(\breve\varphi) - \tilde V'(\breve\psi)),
\end{equation*}
which entails
\begin{equation*}
||\tilde \varphi- \tilde \psi||_{L^\infty(t_0,T;H_0^1)} \leq K ||\tilde
  V'(\breve\varphi)- \tilde V'(\breve\psi)||_{L^2(t_0,T;L^2)},
\end{equation*}
where the constant $K$ decreases when $T$ decreases (see
\cite{Evans}). The right hand side in the previous inequality may be
estimated as
\begin{align*}
& ||\tilde
  V'(\breve\varphi)- \tilde V'(\breve\psi)||_{L^2(t_0,T;L^2)} \leq ||\tilde
  V''||_{L^\infty} ||\breve\varphi - \breve\psi||_{L^2(t_0,T;L^2)} \\
& \leq ||\tilde
  V''||_{L^\infty} ||\breve\varphi - \breve\psi||_{L^2(t_0,T;H_0^1)} \leq ||\tilde
  V''||_{L^\infty} \sqrt{T-t_0} ||\breve\varphi - \breve\psi||_{L^\infty(t_0,T;H_0^1)},
\end{align*}
so that $A$ is a contraction when $T$ is small enough. By splitting
the interval $[t_0,T]$ into smaller subintervals and using the
contraction property on each such interval we obtain the existence and
uniqueness of solutions to (\ref{eq:flow}) when the potential $\tilde
V$ is used. There exists  a continuous representative of solutions to \eqref{eq:flow} in the
equivalence class in $L^\infty(t_0,T;H_0^1)$ (see \cite{Evans} again)
which we call $\varphi$. Since the solution lives in one space dimension
it is continuous as a function of both space and time. So for each
$M>||\varphi_0||$ there is a time $T^*$ such that $||\varphi(t)||_{C(0,1)}
<M$ for all $t \leq T^*$. Thus, in a certain time interval the solution
$\varphi$ is only affected by the unchanged potential $V$ (it never
touches the level where $V$ changes into $\tilde V$). Consider a
time in this interval, and take the inner product with $\varphi_t$ in
\eqref{eq:flow} to get (using $V'(\varphi)=\varphi^3-\varphi$):
\begin{equation*}
||\varphi_t||^2_{L^2} + \frac{\delta}{2}\frac{d}{dt}||\varphi_x||^2_{L^2} +
  \frac{\delta^{-1}}{4}\frac{d}{dt}||\varphi||^4_{L^4}-
  \frac{\delta^{-1}}{2}\frac{d}{dt}||\varphi||^2_{L^2} 
  \leq \half ||\alpha||^2_{L^2}+ \half ||\varphi_t||^2_{L^2}.
\end{equation*}
The $||\varphi_t||^2_{L^2}$ terms are dropped and the resulting
inequality is integrated from $t_0$ to $T^*$:
\begin{multline*}
\delta
||\varphi_x(T^*)||^2_{L^2}+\frac{\delta^{-1}}{2}||\varphi(T^*)||^4_{L^4}-\delta^{-1}
||\varphi(T^*)||^2_{L^2}\\ 
\leq \delta
||(\varphi_0)_x||^2_{L^2}+\frac{\delta^{-1}}{2} ||\varphi_0||^4_{L^4}-
\delta^{-1} ||\varphi_0||^2_{L^2} + \int_{t_0}^{T^*} ||\alpha||^2_{L^2} dt.
\end{multline*}
It is now used that
\begin{equation*}
\frac{\delta^{-1}}{2}||\varphi(T^*)||^4_{L^4}-\delta^{-1}
||\varphi(T^*)||^2_{L^2} =
\delta^{-1}\int_0^1\big(\frac{\varphi(x,T^*)^4}{2}-\varphi(x,T^*)^2\big)dx \geq -\half
\end{equation*}
so that the previous inequality implies
\begin{equation}\label{eq:phibound2}
\delta
||\varphi_x(T^*)||^2_{L^2} 
\leq \delta
||(\varphi_0)_x||^2_{L^2}+\frac{\delta^{-1}}{2} ||\varphi_0||^4_{L^4}-
\delta^{-1} ||\varphi_0||^2_{L^2} + \int_{t_0}^{T^*} ||\alpha||^2_{L^2} dt +\half.
\end{equation}
By Sobolev's inequality we thereby obtain a bound on  the continuous
function $\varphi(T^*)$ in the supremum norm.
Consequently, for all controls $\alpha\in L^2(t_0,T;L^2)$ it is possible
to choose the border point $s$ in Figure \ref{fig:V} between $V$ and
$\tilde V$ large enough so that the solution $\varphi$ is affected only
by the unchanged potential $V$. Note also that it is possible to
choose $T^*=T$ in \eqref{eq:phibound2}. Such a solution is a weak solution to
\eqref{eq:flow} with the original potential $V$. It is unique in
$C(t_0,T;H^1_0)$, for non-uniqueness would otherwise also hold for some
modified potential $\tilde V$. The error bound \eqref{eq:phibound1}
follows from \eqref{eq:phibound2}.
\end{proof}


\bibliographystyle{plain}

\bibliography{references} 


\end{document}